\DeclareSymbolFont{cyrletters}{OT2}{wncyr}{m}{n}
\DeclareMathSymbol{\Sha}{\mathalpha}{cyrletters}{"58}
\title[Density of Selmer ranks in families of even Galois representations]{
Density of Selmer ranks in families of even Galois representations, Wiles' formula, 
and
global reciprocity
}
\date{\today}
\author{Peter Vang Uttenthal}
\email{pu42@cornell.edu}
\address{Department of Mathematics, Cornell University, Ithaca, NY 14853-4201, USA}
\newcommand{\Gal}{\mathrm{Gal}}
\newcommand{\cl}{\mathrm{Cl}}
\newcommand{\Q}{\mathbb{Q}}
\newcommand{\Z}{\mathbb{Z}}
\newcommand{\F}{\mathbb{F}}
\newcommand{\W}{\mathbb{W}(\mathbb{F}_4)}
\newcommand{\Wp}{\mathbb{W}(\mathbb{F}_{p^f})}
\newcommand{\p}{\mathfrak{p}}
\newtheorem{theorem}{Theorem}
\newtheorem{lemma}[theorem]{Lemma}
\newtheorem{definition}[theorem]{Definition}
\newtheorem{proposition}[theorem]{Proposition}
\newtheorem{corollary}[theorem]{Corollary}
\begin{document}

\begin{abstract}
This paper concerns the distribution of Selmer ranks in a family of even Galois representations in residual characteristic $p=2$ obtained by allowing ramification at auxiliary primes. 
The main result is a Galois cohomological analogue of a theorem of Friedlander, Iwaniec, Mazur and Rubin on the distribution of Selmer ranks in a family of twists of elliptic curves. 
The Selmer groups are constructed as prescribed by the Galois cohomological 
method for $\operatorname{GL}(2)$: 
At each ramified place, the local Selmer condition 
is the tangent space of a smooth quotient of the local deformation ring. 
By methods of global class field theory, the Selmer group at the minimal level is computed explicitly. 
The infinitude of primes for which the Selmer rank increases by one is proved, and the density of such primes is shown to be 1/192. 
The proof combines Wiles' formula and the global reciprocity law. 
The result has implications for the algebraic structure of
even deformation rings and the distribution of their presentations in families.  
\end{abstract}

\maketitle
\tableofcontents

\section{Introduction}
In \cite{FIMR}, Friedlander, Iwaniec, Mazur and Rubin introduce the notion of spin of prime ideals and apply it to the arithmetic statistics of elliptic curves. In particular, they prove a theorem on the distribution over primes of the Selmer rank in a family of twists of elliptic curves. In this paper, we find a similar distribution over primes of the Selmer rank in a family of even Galois representations in residual characteristic $p=2$. 

Let $E$ be the elliptic curve over $\Q$ defined by 
$$
E: y^2 = x^3 + x^2 - 16x - 29.
$$
Let $K$ be the totally real subfield of $\Q(\zeta_7)$ of degree 3 over $\Q$.
Then $K = \Q(E[2])$, the field generated by the 2-torsion points of $E$ \cite{FIMR}. For a prime $p$, the quadratic twist $E^{(p)}$ of $E$ by $p$ is the elliptic curve
$$
E^{(p)}: py^2 = x^3 + x^2 - 16x - 29.
$$
Let $\mathfrak{p}$ be a prime of $K$ with totally positive generator $\pi$, and let $g$ be a generator of 
$\Gal(K/\mathbb{Q})$. Define
$$
\text{spin}(\mathfrak{p}) = \bigg(  \frac{\pi}{\mathfrak{p}^{g}} \bigg)
$$
where $(\pi/\mathfrak{a})$ is the quadratic residue symbol in $K$.

\begin{theorem}[Friedlander, Iwaniec, Mazur, Rubin] \label{FIMRthm}
Let $p$ be a prime that splits completely in $K$, and let $\mathfrak{p}$ be a prime above $p$ in $K$. Suppose $\mathfrak{p}$ splits completely in $\Q(E[4])$. Then
$$
\dim_{\F_2} \text{\upshape{Sel}}_2(E^{(p)}/\Q) =
\begin{cases}
\dim_{\F_2} \text{\upshape{Sel}}_2(E/\Q) + 2 & \text{if } \text{\upshape{spin}}(\mathfrak{p}) = 1, \\
\dim_{\F_2} \text{\upshape{Sel}}_2(E/\Q)  & \text{if } \text{\upshape{spin}}(\mathfrak{p}) = -1. \\
\end{cases}
$$ 
There are infinitely many primes $p$ for which the Selmer rank increases by two. In fact, this happens half the time. 
\end{theorem}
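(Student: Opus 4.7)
The plan is to compare $\text{Sel}_2(E/\Q)$ and $\text{Sel}_2(E^{(p)}/\Q)$ directly inside $H^1(\Q, E[2])$, exploiting that $K = \Q(E[2])$ so that the quadratic character cut out by $\Q(\sqrt{p})$ acts trivially on $E[2]$; the two residual $G_\Q$-modules $E[2]$ and $E^{(p)}[2]$ are thus canonically identified. Both Selmer groups then sit inside the same ambient $H^1(\Q, E[2])$, cut out by local conditions that agree away from $p$ (and away from the conductor, which is coprime to $p$). Only the local condition at primes over $p$ is altered by the twist.

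At primes above $p$, the hypothesis that $\mathfrak{p}$ splits completely in $\Q(E[4])$ makes the local Kummer image transparent: for $E$ the local condition at $p$ is the unramified line in $H^1(\Q_p, E[2])$, while for $E^{(p)}$ it becomes the ramified line cut out by $\Q_p(\sqrt{p})$. A Poitou-Tate / Wiles-Greenberg balancing argument in the spirit of the abstract then constrains
\[
\delta := \dim_{\F_2}\text{Sel}_2(E^{(p)}/\Q) - \dim_{\F_2}\text{Sel}_2(E/\Q)
\]
to be even and, under the splitting hypotheses, to lie in $\{0, 2\}$.

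To decide between $\delta = 0$ and $\delta = 2$ I would identify the obstruction with a Cassels-Tate-style pairing of the distinguished generator of $\text{Sel}_2(E/\Q)$ against a local class at $\mathfrak{p}$ coming from $\Q_p(\sqrt{p})$. Global reciprocity in $K$, combined with an explicit description of the $2$-isogeny covers of $E$ over $K$, rewrites this pairing as a product of quadratic residue symbols in $K$. The facts that $K$ is totally real with narrow class number one and that $\pi$ is chosen totally positive remove representative and sign ambiguities, and a short manipulation exploiting the generator $g \in \Gal(K/\Q)$ collapses the product to the single symbol $\bigl(\pi/\mathfrak{p}^g\bigr) = \mathrm{spin}(\mathfrak{p})$. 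I expect this step to be the main obstacle: rigorously identifying the cohomological obstruction with the spin symbol requires careful bookkeeping of the Kummer images, of the $g$-action, and of the dependence on the choice of totally positive generator $\pi$.

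For the infinitude and density assertion I would apply Chebotarev to the governing extension $L/K$ obtained by adjoining square roots of totally positive generators of prime ideals of $K$ (a compositum of narrow ray class fields). The spin symbol descends to a nontrivial character on Frobenius elements in $\Gal(L/K)$, so by Chebotarev the values $\mathrm{spin}(\mathfrak{p}) = \pm 1$ each occur on a set of density $1/2$ among primes $\mathfrak{p}$ satisfying the two splitting hypotheses, hence infinitely often.
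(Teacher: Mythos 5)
This statement is not proved in the paper at all: it is the theorem of Friedlander, Iwaniec, Mazur and Rubin, quoted from \cite{FIMR} in the introduction purely as motivation, so there is no internal proof to compare your argument against. Judged on its own terms, your proposal contains a genuine gap at its final and hardest step. The spin symbol $\mathrm{spin}(\mathfrak{p}) = \bigl(\pi/\mathfrak{p}^{g}\bigr)$ involves the prime $\mathfrak{p}$ both through the totally positive generator $\pi$ and through the modulus $\mathfrak{p}^{g}$; the extension in which one would have to prescribe a splitting condition to read off this symbol is (essentially) $K(\sqrt{\pi})$, which varies with $\mathfrak{p}$. There is no fixed finite ``governing extension $L/K$ obtained by adjoining square roots of totally positive generators of prime ideals'': that compositum is infinite, and the value of $\mathrm{spin}(\mathfrak{p})$ is not a function of the Frobenius of $\mathfrak{p}$ in any extension chosen independently of $\mathfrak{p}$. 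The paper itself emphasizes exactly this point, noting that the infinitude of primes with a given spin is most likely not a Chebotarev condition, and contrasting it with the present paper's setting, where the relevant splitting conditions do live in extensions fixed in advance. In \cite{FIMR} the equidistribution of spins --- the density $1/2$ assertion --- is the principal analytic theorem, established by a Vinogradov-style decomposition of sums of spin symbols into linear and bilinear pieces, quadratic reciprocity in $K$, and nontrivial bilinear-form estimates; no Chebotarev argument is known to replace it, so your closing paragraph cannot be repaired in the form given.

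The earlier cohomological part of your sketch is closer in spirit to the Selmer-group comparison that underlies the quoted theorem: identifying $E[2]$ with $E^{(p)}[2]$ using $K=\Q(E[2])$, noting that the local conditions differ only at $p$, and using the hypothesis that $\mathfrak{p}$ splits completely in $\Q(E[4])$ to pin down the local Kummer images. But as written it is a plan rather than a proof: the parity claim $\delta \in \{0,2\}$ is asserted rather than derived from the Greenberg--Wiles formula together with the specific local data, and the step you yourself flag --- showing that the obstruction class pairs out to the single symbol $\bigl(\pi/\mathfrak{p}^{g}\bigr)$ rather than to some other product of residue symbols --- is precisely where the arithmetic content of the rank formula lies. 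So the proposal is not a correct proof by a different route; it reproduces the easy skeleton and leaves open both the identification with the spin and the equidistribution statement, the latter being irretrievable by the method you propose.
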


In this paper, we study an even, reducible and completely decomposable residual Galois representation
$$
\overline{\rho}: \Gal(\overline{\Q}/\Q) \to \text{GL}(2, \F_4)
$$
unramified outside a finite set of places $S$. 
The field 
$\Q(\overline{\rho})$ fixed by $\ker(\overline{\rho})$ equals the totally real field $\Q(E[2])$ of the elliptic curve $E: y^2 = x^3 + x^2 - 16x - 29$. 
The set $S$ contains the archimedian place, $\infty$, the characteristic $p=2$ of the field of coefficients of $\overline{\rho},$ and the ramified prime $7$ of $K$.
We lift the representation \emph{irreducibly} to mod 8 by thickening the image with cohomology classes ramified at three auxiliary primes $l$, $v$, and $w$, 
which are consequently appended to the set of places $S$ at which we allow ramification.
The primes $l,v$ and $w$ are defined by Chebotarev conditions, and the smallest examples with the desired properties are $l=17, v=167$ and $w=379$.
For a place $q$ in $\Q$, let $G_q=\Gal(\overline{\Q}_q/\Q_q)$. 
For each place $q$ in the set $S=\{ \infty, 2, 7, l, v, w\}$,
we find smooth quotients of the local deformation rings with corresponding tangent spaces 
$\mathcal{N}_q \subseteq H^1(G_q, Ad(\overline{\rho}))$. 
Let $G_S$ denote the Galois group of the maximal extension of $\Q$ unramified outside $S$, and let 
$Ad(\overline{\rho})$ be the set of 2-by-2 matrices over $\F_4$ equipped with a Galois action via $\overline{\rho}$ and conjugation. 
The Selmer group $H^1_{\mathcal{N}}(G_S, Ad(\overline{\rho}))$ is defined to be the kernel of the map
$$
H^1(G_S, Ad(\overline{\rho})) \to \bigoplus_{q\in S} H^1(G_q, Ad(\overline{\rho})) / \mathcal{N}_q.
$$ If $M$ is a Galois module, we let $M^* = \operatorname{Hom}(M, \F_2)$ denote the dual module. 
For each $q\in S$, let $\mathcal{N}_q^\perp \subseteq H^1(G_q,Ad(\overline{\rho})^*)$ denote the annihilator of $\mathcal{N}_q$ under the local pairing. The dual Selmer group $H^1_{\mathcal{N}^\perp}(G_S, Ad(\overline{\rho})^*)$ is the kernel of the map 
$$
H^1(G_S, Ad(\overline{\rho})^*) \to \bigoplus_{q\in S} H^1(G_q, Ad(\overline{\rho})^*) / \mathcal{N}_q^\perp.
$$
Let $K^{(w)}$ denote the maximal 2-elementary abelian extension of $K$ unramified outside the primes in $K$ above $w$. Let $f^{(w)} \in H^1(G_S, Ad(\overline{\rho}))$ be the cohomology class corresponding to $K^{(w)}$, and let $\sigma_p$ be a lift of Frobenius at $p$ in $G_p$. There is an extension of $K$ 
which, in our setting, controls the existence of 2-elementary abelian extensions of $K$ with prescribed tame ramification,
called the governing field of $K$. More details about this notion can be found in \cite{gras}.
The main result of this paper is the following. 
\begin{theorem} \label{maintheorem}
Let $p$ be a prime that splits completely in $K$ and let $\mathfrak{p}$ be a prime above $p$ in $K$. Suppose $\mathfrak{p}$ satisfies a splitting condition in the governing field of $K$.
Then 
$$ \dim H^1_\mathcal{N}(G_{S\cup \{ p \}}, Ad(\overline{\rho}))
=
\begin{cases}
\dim H^1_\mathcal{N}(G_{S}, Ad(\overline{\rho})) + 1, & \text{if }f^{(w)}(\sigma_p) = 0 , \\
\dim H^1_\mathcal{N}(G_{S}, Ad(\overline{\rho})), &  \text{if }f^{(w)}(\sigma_p) \neq 0. \\
\end{cases}
$$
There are infinitely many primes $p$ for which the Selmer rank increases by one. In fact, this happens a quarter of the time. 
\end{theorem}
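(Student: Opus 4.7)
\emph{Step 1 (reduction via Greenberg--Wiles).} The plan is to compare the Selmer and dual Selmer groups at $S$ and at $S\cup\{p\}$ using the Greenberg--Wiles formula, then identify the change in Selmer rank with the vanishing of a single dual Selmer class at $p$, and finally apply Chebotarev's density theorem. At the new prime $p$ I would impose the standard local condition (the appropriate one-dimensional quotient analogous to the trivial-prime conditions at $l,v,w$), so that the local term added to the Greenberg--Wiles formula is $+1$. Since $p$ splits completely in $K$, both $Ad(\overline{\rho})$ and $Ad(\overline{\rho})^*$ are trivial as $G_p$-modules, and the local term is easy to evaluate. Combined with the balanced hypothesis at $S$ recalled above, the formula yields
\[
\dim H^1_\mathcal{N}(G_{S\cup\{p\}}, Ad(\overline{\rho})) - \dim H^1_\mathcal{N}(G_{S}, Ad(\overline{\rho})) = 1 - \dim \mathrm{image}(\mathrm{res}_p),
\]
where $\mathrm{res}_p\colon H^1_{\mathcal{N}^\perp}(G_S, Ad(\overline{\rho})^*) \to H^1(G_p, Ad(\overline{\rho})^*)/\mathcal{N}_p^\perp$ is the restriction of global dual Selmer classes into the local quotient at $p$, which is one-dimensional by design. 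The dichotomy claimed in the theorem is then equivalent to the assertion that $\mathrm{res}_p = 0$ precisely when $f^{(w)}(\sigma_p)=0$.

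\emph{Step 2 (identifying $f^{(w)}$ as the sole obstruction).} The dual Selmer group at $S$ is three-dimensional, matching the Selmer group by the balanced condition. I would find an explicit basis $\{f_1, f_2, f^{(w)}\}$ in which the first two classes are exactly those cut out by the governing field of $K$: the prescribed splitting of $\mathfrak p$ in that field forces $\mathrm{res}_p(f_1)=\mathrm{res}_p(f_2)=0$ automatically, by the defining property of the governing field (see Gras). The remaining class $f^{(w)}$ corresponds to the maximal $2$-elementary abelian extension $K^{(w)}/K$ unramified outside primes above $w$; its restriction at $p$ is, after the relevant identifications, the homomorphism $\sigma \mapsto f^{(w)}(\sigma)$, so $\mathrm{res}_p$ vanishes if and only if $f^{(w)}(\sigma_p)=0$, giving the dichotomy.

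\emph{Step 3 (Chebotarev and main obstacle).} The three conditions on $p$ (splitting in $K$, the governing-field splitting, and $f^{(w)}(\sigma_p)=0$) are each conjugacy-class conditions in $\Gal(L/\Q)$ for a single finite Galois extension $L/\Q$, namely the compositum of $K$, the governing field, and the fixed field of $\ker f^{(w)}$. Chebotarev's density theorem applied to $L/\Q$ then delivers both the infinitude of qualifying primes and their density, yielding the claimed $1/4$ among primes satisfying the two splitting hypotheses. This requires one linear disjointness input: that $K^{(w)}$ is not already contained in the compositum of $K$ with the governing field, so that the condition $f^{(w)}(\sigma_p)=0$ is an honest independent constraint. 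The hard part, in my view, is Step 2: producing a basis of the three-dimensional dual Selmer group which separates the ``governing-field classes'' from $f^{(w)}$ demands a careful analysis of the 2-elementary abelian extensions of $K$ unramified outside $\{l,v,w\}$ and their images in $H^1(G_S, Ad(\overline{\rho})^*)$. Once that decomposition is established, Steps 1 and 3 are essentially bookkeeping on top of Greenberg--Wiles and Chebotarev.
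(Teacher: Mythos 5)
There is a genuine gap, and it sits exactly where you locate the ``hard part.'' First, your Greenberg--Wiles bookkeeping does not match the Selmer structure the theorem is about. The paper's local condition at the new prime is the four-dimensional space $\mathcal{N}_p$ of Corollary \ref{N_p} (spanned by $f_p^{(1)},f_p^{(2)},f_p^{(3)},g_p^{\text{unr}}$), chosen so that $\dim\mathcal{N}_p-\dim H^0(G_p,Ad(\overline{\rho}))=0$ and the setting stays balanced; there is no ``$+1$ local term,'' and the quotient $H^1(G_p,Ad(\overline{\rho})^*)/\mathcal{N}_p^\perp$ is four-dimensional, not one-dimensional. Moreover $\mathcal{N}_p$ neither contains nor is contained in the unramified subspace (it omits the non-scalar diagonal unramified direction and includes the ramified class $f_p^{(2)}$), so the two Selmer structures being compared are not nested at $p$ and your asserted formula $\dim H^1_{\mathcal{N}}(G_{S\cup\{p\}})-\dim H^1_{\mathcal{N}}(G_S)=1-\dim\mathrm{image}(\mathrm{res}_p)$ is not a direct consequence of Poitou--Tate; in particular you never verify that the old Selmer classes restrict into $\mathcal{N}_p$, which in the paper requires the explicit rank-three basis $\{f^{(v)},f^{(w)},\chi_2 I\}$ and the specific generators of $\mathcal{N}_p$ (notably $g_p^{\text{unr}}$, which is only available after lifting past mod $4$).

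Second, your Step 2 misreads the role of the governing field. The splitting condition $\sigma_{\mathfrak{p}}=(1,0)\in\F_2\oplus U_2$ is not there to force dual Selmer classes to vanish at $p$; it is the Gras--Munnier input that guarantees the \emph{existence} of the auxiliary extension $K^{(p)}/K$ ramified only above $p$ with $\Gal(K^{(p)}/K)\simeq\F_4(\psi^{-1})$, i.e.\ of the candidate new class $f^{(p)}$ (and it forces $p\equiv 3\bmod 4$). Your claimed basis of the dual Selmer group --- two ``governing-field classes'' plus $f^{(w)}$ --- is unsubstantiated: the paper never computes the dual Selmer group, and its local conditions ($\mathcal{N}_\infty^\perp$ everything, $\mathcal{N}_2^\perp$ only two-dimensional) make such a description far from automatic. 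The paper's actual mechanism for the dichotomy is different: global reciprocity (summing local invariants, using $Ad\simeq Ad^*$ in characteristic $2$) gives $f^{(w)}(\sigma_p)=0\iff f^{(p)}(\sigma_w)=0$, membership of $f^{(p)}$ in the Selmer group is then decided by the explicit shape of $\mathcal{N}_w$, and in the case $f^{(w)}(\sigma_p)\neq 0$ one still has to rule out hidden new classes such as $b f^{(p)}+b\chi_w I$, which is exactly where the paper needs the local analysis at $2$, $v$, $w$ (Lemma \ref{lemmaQ_2}, that $\chi_w I|_{G_2}\notin\mathcal{N}_2$). None of this appears in your proposal, so the dichotomy is not established. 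Your Step 3 (Chebotarev on the compositum of $K$, the governing field and $K^{(w)}$, with a linear disjointness check) is essentially the paper's argument and is fine once the dichotomy and the existence of $f^{(p)}$ are in place.
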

We choose $\mathcal{N}_\infty = 0,$ which ensures that 
all global deformations of $\overline{\rho}$ that locally factor through the smooth quotients at the places in $S$ are \emph{even}. 
On the other hand, Galois representations attached to elliptic curves are \emph{odd}, and hence our result is distinct from Friedlander, Iwaniec, Mazur and Rubin's Theorem \ref{FIMRthm}.

Since the pair $(H^1_\mathcal{N}(G_{S}, Ad(\overline{\rho})), H^1_{\mathcal{N}^\perp}(G_{S}, Ad(\overline{\rho})^*) )$ of the global setting determines the algebraic structure of 
the global, even deformation ring  $R_{(\mathcal{N}_q)_{q\in S}}$
parametrizing deformations of 
$\overline{\rho}$ that locally factor through the smooth quotients for all $q\in S$, the following result on even deformation rings is a corollary of Theorem \ref{maintheorem}.
\begin{theorem}
    The global, even deformation ring 
    $R_{(\mathcal{N}_q)_{q\in S}}$
    parametrizing deformations of 
    $\overline{\rho}$ that locally factor through the smooth quotients for all $q\in S$
   has the structure 
$$
R_{(\mathcal{N}_q)_{q\in S}} \simeq \W[[T_1,T_2]]/(r_1,r_2, r_3).
$$
There exists an infinite set of primes $p$ of density 1/192 such that the even deformation ring
$R_{(\mathcal{N}_q)_{q\in S\cup\{p\}}}$ 
parametrizing deformations
that locally factor through the smooth quotients for all $q\in S\cup\{p\}$
has the structure 
$$
R_{(\mathcal{N}_q)_{q\in S\cup\{p\}}}
 \simeq \W[[T_1,T_2, T_3]]/(r_1,r_2, r_3, r_4).
$$
\end{theorem}

\subsection{Related work}
The distribution results on spin in \cite{FIMR} are conditional on
a conjecture on short character sums,  denoted $C_n$, where $n$ is the degree of the number field over which the spin symbol is defined. The conjecture $C_n$ is only proved for $n=3$.
Conditionally on $C_n$ for $n\geq 3$, Koymans and Milovic (\cite{Koymans}) have proved that the spin symbol does not define a Chebotarev condition. 
We thank Peter Koymans for explaining the proof to us, which follows from the equidistribution result on joint spin symbols in \cite{Koymans}.
In our case, the infinitude of primes for which the Selmer rank increases follows from Chebotarev's theorem applied to abelian extensions of $K$.  
The techniques in \cite{FIMR} are mainly drawn from analytic number theory. In this paper, the main technique is the the purely Galois cohomological method of Ramakrishna and others (\cite{even1}, \cite{even2}, \cite{lifting}, \cite{SFM}, \cite{KR} \cite{artinII}). This method extends the work of Mazur (\cite{Mazur1}, \cite{Mazur2}) and is built upon
Galois cohomology and class field theory as essential tools. In addition, we use the tame Gras-Munnier theorem to construct certain global cohomology classes corresponding to number fields with a prescribed tame ramification.
See \cite{gras} for a new proof of the tame Gras-Munnier theorem. 

Two-dimensional Galois representations of the absolute Galois group over $\Q$ are \emph{odd} if they cut out number fields that are ramified at infinity, while they are \emph{even} if they cut out totally real fields. The odd case is the most well understood, because odd Galois representations can be studied with tools from algebraic geometry, such as Jacobians of modular curves as in the work of Ribet
(\cite{Ribet}) and \'{e}tale cohomology of algebraic varieties.  The work of Wiles and others suggest that deformation rings of odd Galois representations which are unramified outside a finite set of primes, have fixed determinant, and are potentially semistable, are always finite and flat over $\Z_p$ (see e.g. \cite{canadian}, p. 208). On the other hand, it is not known in general whether deformation rings of even Galois representations are finite and flat over $\Z_p$.
Even representations in characteristic zero are conjectured to come from geometry in the sense of Fontaine-Mazur, but they are also conjectured to be Artin representations, so the geometry is rather trivial. Meanwhile,  
complex even representations conjecturally correspond to automorphic representations attached to classical Maass wave forms. In a recent survey of the progress on Langlands reciprocity since Wiles' proof of Fermat's Last Theorem, Calegari (\cite{Calegari}, p. 28) describes our ignorance of even representations, and how limited our progress on understanding the role of even representations in the Langlands program has been so far. 
The aim of this paper is to shed light on the extent to which even representations behave like the odd case. The techniques we use often work well without any oddness assumption, and this feature is exploited to study an explicit family of even representations. 
\\
\\
In \cite{even2}, the study of the arithmetic statistics of 
primes raising the level of an even, 3-adic, surjective Galois representation was initiated, and level raising at a prime $p$ was reduced to a splitting condition in an extension that depended on the prime $p$ itself.
Since this condition is most likely not Chebotarev, proving the infinitude of such primes seems out of reach with current techniques. 
The work by \cite{FIMR} on spin of prime ideals takes a step forward in the direction of proving infinitude of primes satisfying splitting conditions in extensions that depend on the prime. Further progress in this direction has been achieved by McMeekin in \cite{Christine1} and by Chan, McMeekin and Milovic  in \cite{Christine2}. However, the results involving spin only apply unconditionally for cubic, totally real number fields and for splitting conditions in 2-extensions of these, and therefore they cannot be applied to the 3-adic representation of \cite{even2}.
For this to work, one would have to generalize the results in \cite{FIMR} to $A_4$-extensions of $\Q$ and work with a \emph{cubic} residue symbol. A promising step forward have been taken by Koymans and Milovic (\cite{Koymans}) who, conditionally on $C_n$, have extended the results of \cite{FIMR} to all Galois extensions. In the future, it would be of interest to discover to what extent their results can be generalized to cubic residue symbols. One of the original motivations for this paper was to construct a deformation-theoretic setting where one could potentially apply the techniques in \cite{FIMR} to prove infinitude of primes for which the Selmer rank increased. This is why we build our residual representation out of the cyclic, totally real field $K$. In addition, working in characteristic $p=2$ is what is needed for lifts of $\overline{\rho}$ to cut out 2-extensions of $K$. However, in the end we were able to prove infinitude of primes raising the Selmer rank without using prime spin. 

To work with a residual representation that cuts out the field $K$, we were naturally led to study a \emph{reducible} $\overline{\rho}$. For this reason, we were able to draw on ideas of Hamblen and Ramakrishna (\cite{HR}) and 
Fakhruddin, Khare and Patrikis (\cite{FKP}), who work with \emph{trivial} primes in residual characteristic $p\geq 3.$
This notion turns out to be the right one for us to consider in this paper as well, suitably adapted to characteristic $p=2$.

It is generally not obvious how to increase the Selmer rank by allowing ramification at just one new prime. 
One of the original motivations of Khare, Larsen, and Ramakrishna in \cite{KLR1} for developing the so-called \emph{doubling method} (see \cite{FKP1}, p. 3516) was to be able to raise the Selmer rank by allowing ramification at \emph{two} new primes simultaneously. 
The precise result they prove is stated below. For a precise definition of \emph{nice} primes, see \cite{KLR1}, Definition 1, p. 713. 

\begin{theorem}[Khare, Larsen, and Ramakrishna (\cite{KLR1} Corollary 14)] \label{KLRselmer}
Let $p\geq 5$, and let $\overline{\rho}$ be odd, absolutely irreducible, and modular of square-free level and weight 2. Let $S$ be a set of places containing $p$ and the ramified primes of $\overline{\rho}$. There is a finite set $T$ of nice primes such that 
$$
H^1_\mathcal{N}(G_{S\cup T}, Ad(\overline{\rho})) =0.
$$ 
Furthermore, there exists a set $U$ consisting of at most two nice primes such that 
$$
\dim H^1_\mathcal{N}(G_{S\cup T \cup U}, Ad(\overline{\rho})) \geq 1.
$$
\end{theorem}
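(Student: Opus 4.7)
The plan is to proceed in two stages: first annihilate the (dual) Selmer group by adding a finite set $T$ of nice primes one at a time, and then apply the doubling method to produce a pair $U$ of two nice primes forcing the Selmer rank to increase by at least one. Throughout, Chebotarev density applied to carefully chosen compositum fields provides the source of infinitely many nice primes with the required splitting behavior.

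For Stage 1, the oddness of $\overline{\rho}$ together with the prescribed local conditions $\mathcal{N}_q$ keeps the Wiles--Greenberg formula in balance, so $\dim H^1_\mathcal{N} = \dim H^1_{\mathcal{N}^\perp}$ at every step of the iteration, and it suffices to annihilate the dual Selmer group. Given any nonzero $\psi \in H^1_{\mathcal{N}^\perp}(G_S, Ad(\overline{\rho})^*)$, I apply Chebotarev to the compositum of the field cut out by $\overline{\rho}$, the cyclotomic field $\Q(\mu_p)$, and the fixed field of $\psi$, to produce a nice prime $q \notin S$ such that $\overline{\rho}(\mathrm{Frob}_q)$ has the required nice eigenvalue shape and $\psi|_{G_q} \notin \mathcal{N}_q^\perp$. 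Enlarging to $S \cup \{q\}$ strictly decreases the dimension of the dual Selmer group while maintaining balance, and after at most $\dim H^1_{\mathcal{N}^\perp}(G_S)$ iterations we obtain the desired $T$.

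For Stage 2, with both $H^1_\mathcal{N}(G_{S \cup T}, Ad(\overline{\rho}))$ and its dual trivial, I would construct $U$ via the doubling argument. The key observation is that at a nice prime $q$, the local cohomology $H^1(G_q, Ad(\overline{\rho}))$ is $2$-dimensional while the admissible cotangent subspace $\mathcal{N}_q$ is only $1$-dimensional, so adding a single new prime contributes a ramified global class whose local restriction generically misses $\mathcal{N}_q$. With two primes $u_1, u_2$, one has an extra degree of freedom in choosing the global class and can exploit a sequential Chebotarev step, first for $u_1$ and then for $u_2$ in an extension that depends on $u_1$, to produce a global class unramified at $S \cup T$ whose local restriction lies in $\mathcal{N}_{u_i}$ for both $i = 1, 2$. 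Absolute irreducibility of $\overline{\rho}$ ensures the surjectivity of the restriction maps needed to realize the desired pairs of local classes globally.

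The main obstacle is Stage 2. Raising the Selmer rank by allowing ramification at a single new prime is typically impossible because the new ramified class almost always fails the local condition $\mathcal{N}_q$. The doubling method circumvents this with a pair of primes, but the argument requires a two-step Chebotarev density argument on sequentially constructed fields, careful Poitou--Tate bookkeeping to verify that the obstruction to gluing local classes globally vanishes, and full use of absolute irreducibility. After $u_1$ is fixed, the extension over which $u_2$ is chosen is substantially larger and depends on the specific class one is completing, making the combinatorics intrinsically more involved than the single-prime iteration of Stage 1.
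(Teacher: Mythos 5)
This statement is not proved in the paper at all: it is quoted verbatim, with attribution, as Corollary 14 of \cite{KLR1}, purely as background motivation for the doubling method, so there is no internal proof to compare your attempt against. On its own terms, your outline does follow the strategy that Khare, Larsen and Ramakrishna actually use (Ramakrishna-style annihilation of the dual Selmer group by nice primes, then the doubling method to force the rank up by allowing ramification at two further primes), but as written both stages have concrete gaps. In Stage 1, choosing $q$ so that only $\psi|_{G_q}\notin\mathcal{N}_q^{\perp}$ does not guarantee that the dual Selmer rank drops: at a nice prime the subspace $\mathcal{N}_q$ contains ramified directions, so neither the new Selmer group nor the new dual Selmer group is contained in the old one, and killing $\psi$ locally does not prevent new classes ramified at $q$ from entering; balance then permits both groups to keep their size. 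The standard lemma imposes a simultaneous condition, $f|_{G_q}\notin\mathcal{N}_q$ for a nonzero Selmer class $f$ together with $\psi|_{G_q}\notin\mathcal{N}_q^{\perp}$, and the real work (where $p\geq 5$ and absolute irreducibility are used) is the linear-disjointness argument showing that the composite Chebotarev condition on the fields cut out by $\overline{\rho}$, $\mu_p$, $f$ and $\psi$ can be met. Your sketch omits both the Selmer-side condition and that independence argument.

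In Stage 2 you name the right tool but not its content. The obstacle is not merely bookkeeping: a candidate new Selmer class is ramified only at the new prime, and the local condition there involves the value of that very class at its own Frobenius; since the field the class cuts out depends on the prime, this is not a Chebotarev condition on that prime, which is precisely why a single auxiliary prime is generally insufficient. The doubling method resolves this by working with two primes $u_1,u_2$: one constructs classes ramified at $u_1$ and at $u_2$ with prescribed behavior at all places of $S\cup T$, and chooses $u_2$ by Chebotarev in extensions depending on $u_1$ (together with a global reciprocity computation of local invariants) so that a suitable linear combination satisfies the local conditions at $u_1$ and $u_2$ as well. Your proposal asserts that a ``sequential Chebotarev step'' and ``Poitou--Tate bookkeeping'' achieve this, but it neither constructs the global class with prescribed ramification nor explains how the second prime absorbs the self-referential condition at the first; that construction is the substance of \cite{KLR1}, so the proposal is a correct road map rather than a proof.
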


In the setting of \cite{KLR1}, increasing the Selmer rank by allowing ramification at two additional primes has applications to the existence of newforms, as the following corollary shows. 

\begin{corollary}[\cite{KLR1}, p. 727]
In the notation of the previous theorem, there is a unique newform of level $S\cup T$ whose $p$-adic Galois representation is congruent to $\overline{\rho}$ mod $p$, while there exist at least two possibly Galois conjugate newforms of level $S\cup T \cup U$ whose residual representations are isomorphic to $\overline{\rho}$.
\end{corollary}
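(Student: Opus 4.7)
The plan is to translate the corollary into statements about the universal deformation ring of $\overline{\rho}$ via a modularity lifting ($R=T$) theorem, and then read both claims off from Theorem \ref{KLRselmer}. I would invoke an $R=T$ theorem appropriate to the setting of \cite{KLR1} (odd, absolutely irreducible, modular, square-free level, weight two), with the local condition $\mathcal{N}_q$ at each auxiliary nice prime $q\in T\cup U$ chosen to be of Steinberg type. This way, characteristic-zero $W(\F_p)$-points of the deformation rings $R_{S\cup T}$ and $R_{S\cup T\cup U}$ correspond, via $R=T$, to newforms of the respective levels whose residual representation is isomorphic to $\overline{\rho}$.

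For the uniqueness claim, I would use the standard identification of $H^1_\mathcal{N}(G_{S\cup T}, Ad(\overline{\rho}))$ with the relative tangent space of $R_{S\cup T}$ over $W(\F_p)$. Theorem \ref{KLRselmer} makes this Selmer group vanish, so $R_{S\cup T}$ is a quotient of $W(\F_p)$; combined with the existence of a characteristic-zero lift, obtained from the modular form producing $\overline{\rho}$ together with Ribet-style level-raising at the primes of $T$, this forces $R_{S\cup T}=W(\F_p)$. Via $R=T$ the corresponding Hecke algebra is then $W(\F_p)$ itself, yielding a single newform of level $S\cup T$ whose $p$-adic Galois representation is congruent to $\overline{\rho}$ mod $p$.

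For the second claim, the inequality $\dim H^1_\mathcal{N}(G_{S\cup T\cup U}, Ad(\overline{\rho})) \geq 1$ makes the relative tangent space of $R_{S\cup T\cup U}$ over $W(\F_p)$ nontrivial, and hence forces this deformation ring to admit at least two distinct characteristic-zero $W(\F_p)$-algebra homomorphisms reducing to $\overline{\rho}$ mod $p$. Via $R=T$, these correspond to at least two (possibly Galois-conjugate) newforms of level $S\cup T\cup U$ whose residual representations are isomorphic to $\overline{\rho}$. The main obstacle is verifying the $R=T$ identification with the precise local conditions used in \cite{KLR1} — in particular, matching the Steinberg-type $\mathcal{N}_q$ at nice primes to Ribet-style level-raising congruences — and confirming that the two deformations produced in the second step are genuinely distinct rather than related by an outer automorphism of the deformation problem.
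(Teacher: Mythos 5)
A preliminary remark: the paper does not prove this statement at all — it is quoted from \cite{KLR1} (p.\ 727) purely as background — so there is no internal argument to compare against; your strategy (read both counts off the constrained deformation rings via an $R=T$ theorem, using the quoted theorem's computation of the Selmer groups) is indeed the standard route for results of this type.

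As written, though, there is a genuine logical gap in the second half. From $\dim H^1_\mathcal{N}(G_{S\cup T\cup U},Ad(\overline{\rho}))\geq 1$ you infer that the relative tangent space of the deformation ring is nonzero and ``hence'' that it admits at least two distinct characteristic-zero points. That implication fails for a general complete noetherian local $\Z_p$-algebra: $\Z_p[x]/(x^2,px)$ has one-dimensional relative cotangent space but a unique $\overline{\Q}_p$-point. What makes the step valid is precisely the identification $R\simeq T$ together with the facts that the weight-two Hecke algebra $T$ is finite flat over $\Z_p$ and $T\otimes\Q_p$ is reduced: then a nonzero relative tangent space gives $R\neq \Z_p$, finite flatness gives $\mathrm{rank}_{\Z_p}T\geq 2$, and reducedness converts rank at least two into at least two homomorphisms $T\to\overline{\Q}_p$, i.e.\ at least two newforms up to Galois conjugacy. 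So $R=T$ (or at least finiteness, flatness and generic reducedness of $R$) must be invoked \emph{before} passing from tangent spaces to points, not after. A second gap sits in the uniqueness half: the vanishing of the Selmer group plus $R_{S\cup T}=\Z_p$ bounds only those newforms whose associated representations satisfy the chosen local conditions; to get uniqueness among \emph{all} newforms of level $S\cup T$ congruent to $\overline{\rho}$ you must show, via local--global compatibility at the nice primes of $T$ and minimality at the primes of $S$, that every such newform does define a point of this constrained problem. You name this matching as ``the main obstacle'' but do not supply it, and it is exactly where the content of the corollary lies; similarly, you should note explicitly that the second point at level $S\cup T\cup U$ must be ramified at some prime of $U$ (a second minimal, unramified-at-$U$ point would contradict the uniqueness already proved), which is what justifies calling it a newform of the larger level.
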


In the explicit setup of this paper, we are able to raise the Selmer rank by allowing ramification at just a single new prime. 

\subsection{Summary}

Finally, we summarize the contents of the following sections. 

We construct the Selmer condition $\mathcal{N}=(\mathcal{N}_q)_{q\in S}$
as prescribed in the Galois cohomological method (\cite{SFM}): For each place $q \in S$,
we find a pair
$(\mathcal{C}_q,\mathcal{N}_q)$, where $\mathcal{C}_q$ is a class of 
liftable, local
deformations that factor through a smooth quotient of the local deformation ring at $q$, and $\mathcal{N}_q$ is the space of local cohomology classes that preserves $\mathcal{C}_q$. 
The Selmer condition $\mathcal{N}$ defines a global, versal deformation ring
$R_{(\mathcal{N}_q)_{q\in S}}$ which is a hull for the deformation functor
that takes coefficient rings to $\overline{\rho}$-deformations $\rho$
with the property that $\rho|_{G_q} \in \mathcal{C}_q$ for all $q\in S$.
The beginning sections are devoted to ensuring that the 
deformations that factor through $R_{(\mathcal{N}_q)_{q\in S}} $
are irreducible. This implies that such representations are not just sums of characters described by class field theory. 
We use an explicit technique of thickening the image of the Teichm\"{u}ller lift by acting on it with cohomology classes that cut out various irreducible Galois modules.

The data
$(
H^1_{\mathcal{N}}(G_S, Ad(\overline{\rho}))
,H^1_{\mathcal{N}^\perp}(G_S, Ad(\overline{\rho})^*))$
of the global setting describe the algebraic structure of the even deformation ring 
$R_{(\mathcal{N}_q)_{q\in S}}$, in the sense that if
$$
(\dim
H^1_{\mathcal{N}}(G_S, Ad(\overline{\rho}))
,\dim H^1_{\mathcal{N}^\perp}(G_S, Ad(\overline{\rho})^*))
=(n,m)
$$
then 
$$
R_{(\mathcal{N}_q)_{q\in S}} \simeq \W[[T_1, \ldots, T_n]]/(r_1,\ldots,r_m).
$$
We compute the Selmer group
$H^1_{\mathcal{N}}(G_S, Ad(\overline{\rho})$ explicitly, 
and apply Wiles' formula (\ref{Wiles}) to show to that
$$
(\dim
H^1_{\mathcal{N}}(G_S, Ad(\overline{\rho}))
,\dim H^1_{\mathcal{N}^\perp}(G_S, Ad(\overline{\rho})^*))
=
(2,3).
$$
This is a consequence of a formula from global class field theory concerning the rank of certain ray class groups. 

We do not study the existence of $\W$-valued points of $R_{(\mathcal{N}_q)_{q\in S}}$. 
Instead, we focus solely on studying the variation in the rank of the global setting as we allow ramification at new trivial primes $p$.
If $p$ splits completely in a field we denote $K^{(w)}$, then the Selmer rank increases by one, and otherwise the Selmer rank remains the same. 
The proof combines Wiles' formula and the global reciprocity law. In particular, Wiles' formula governs
what additional global cohomology classes that arise after allowing ramification at $p$, while the global reciprocity law provides control of the local shape of these new cohomology classes at the places in $S$.
We show that it is possible to lower the rank of the Selmer group by changing bases locally at trivial primes, but that the rank is bounded below by one. 
In the final section, we prove that the density of primes raising the Selmer rank is 1/192, and we deduce a corollary concerning the statistics of global, even deformation rings. 

\subsection{Acknowledgements}
I am thankful to Ravi Ramakrishna for getting me interested in the project, for his kindness in sharing his ideas, and for his support and encouragement during the preparation of this paper. I would like to thank Peter Koymans for helpful conversations on spin of primes. I am grateful to the anonymous referee for a careful reading of the manuscript, including many helpful suggestions that improved the final version of the paper.

\section{Review of some aspects of Galois deformations}
The purpose of this section is to briefly summarize some standard facts from deformation theory that will be used in the paper.
Let $p$ be a prime and let $G_\Q=\Gal(\overline{\Q}/\Q).$
The theory is available for Galois representations 
$$
\overline{\rho}: G_\Q \longrightarrow G'(\overline{\F}_{p})
$$
valued in arbitrary reductive algebraic groups $G'$ (\cite{Patrikis1}, \cite{FKP1}),
and is concerned with lifting $\overline{\rho}$
to coefficient rings in the category of complete Noetherian local rings. Below, we will specialize to $\operatorname{GL}(2)$, although more general statements are available. 
Let $G'=\operatorname{GL}(2),$ and let 
$f \geq 1$ be such that 
$$
\overline{\rho}: G_\Q \longrightarrow \operatorname{GL}(2,\F_{p^f}).
$$
Suppose $S$ is a finite set of places containing $p$.
If one has applications of the global duality theorems of Galois cohomology in mind, or theorems relying on these, then it is convenient to have $S$ contain the archimedian place $\infty$.
If $\overline{\rho}$ is unramified outside $S$, it factors through 
the Galois group $G_S =\Gal(\Q_S/\Q)$, where 
$\Q_S$ is the maximal extension of $\Q$ unramified outside $S$.

\begin{proposition}[Existence of versal deformation rings]
Let $G$ be the absolute Galois group of a local field or a number field and consider a continuous homomorhpism 
$$
\overline{\rho}: G \to \operatorname{GL}(2,\F_{p^f}).
$$
The deformation functor of $\overline{\rho}$ 
from
the category of complete Noetherian local $\Wp$-algebras
to the set of strict equivalence classes of deformations of $\overline{\rho}$
has a hull $(R^{\operatorname{ver}}, \rho^{\operatorname{ver}}).$ 
In particular, for any deformation 
$$
\rho: G \longrightarrow \operatorname{GL}(2,A)
$$
of $\overline{\rho}$, there exists a morphism 
$$
\varphi: R^{\operatorname{ver}} \to A
$$
such that 
$$
\varphi \circ \rho^{\operatorname{ver}} = \rho.
$$
The morphism need not be unique. 
\end{proposition}

\begin{proposition}[From local to global liftability] \label{local-global lift}
Consider a global residual representation
$$
\overline{\rho}: G_S \to \operatorname{GL}(2,\F_{p^f}),
$$
unramified outside a finite set of places $S$, 
and a deformation 
$$
\rho^{(k)}: G_S \to \operatorname{GL}(2,\Wp/ p^k \Wp)
$$
for some $k \geq 1.$
Let 
$\Sha_S^2(Ad(\overline{\rho}))$
be the kernel of the map 
$$
H^2(G_S, Ad(\overline{\rho})) \to \bigoplus_{v\in S} 
H^2(G_v, Ad(\overline{\rho}) ).
$$
Suppose 
$
\rho^{(k)}|_{G_v}
$
is liftable to $\Wp/ p^{k+1} \Wp$ for all $v\in S.$
Then the global obstruction to lifting $\rho^{(k)}$ to $\Wp/ p^{k+1} \Wp$
lives in $\Sha_S^2(Ad(\overline{\rho}))$. In particular, if $\Sha_S^2(Ad(\overline{\rho}))$ is trivial, then there exists a global lift to $\Wp/ p^{k+1} \Wp$.
\end{proposition}

\begin{proposition}[The action of cohomology classes on lifts]
Let $S$ be a finite set of places of $\Q$.
Let $G$ be $G_S$, $G_\Q$, or $G_v$ for some place $v$ in $\Q$. 
    Consider a representation
$$
\rho^{(k-1)}: G \to \operatorname{GL}(2,\Wp/ p^{k-1} \Wp)
$$
for some $k \geq 2.$
The group
$H^1(G, Ad(\overline{\rho}))$ acts transitively on infinitesimal deformations of 
$\rho^{(k-1)}$. More precisely, 
if 
$$
\rho^{(k)}: G \to \operatorname{GL}(2,\Wp/ p^k \Wp)
$$
is a deformation of $\rho^{(k-1)}$ and 
$f \in H^1(G, Ad(\overline{\rho})),$
then 
$$
(I+p^{k-1}f) \rho^{(k)}
$$ 
is another deformation of $\rho^{(k-1)}$ ($I$ is the identity matrix), and if 
$\rho^{(k)}_1$ and $\rho^{(k)}_2$ are deformations of $\rho^{(k-1)}$, then there exists some 
$f\in H^1(G, Ad(\overline{\rho}))$
such that 
$$
\rho^{(k)}_2 = (I+p^{k-1}f) \rho^{(k)}_1.
$$ 
\end{proposition}
\begin{proof}
    If $f$ and $f'$ are 1-cocyles that represent the same cohomology class in $H^1(G, Ad(\overline{\rho}))$, then the representations
    $(I+p^{k-1}f) \rho^{(k)}$ and $(I+p^{k-1}f') \rho^{(k)}$ are strictly equivalent. 
    Suppose $\rho^{(k)}_1$ and $\rho^{(k)}_2$ are deformations of $\rho^{(k-1)}$. Then the function
    $
\rho^{(k)}_2 ( \rho_1^{(k)})^{-1}
    $
    on $G$ takes values in $I + p^{k-1} Ad(\overline{\rho})$
    and represents a cohomology class in 
    $H^1(G, I + p^{k-1}Ad(\overline{\rho})).$
    Since 
    $
 I + p^{k-1}Ad(\overline{\rho}) \simeq Ad(\overline{\rho})
 $ as $G$-modules, we conclude that the action is transitive.
\end{proof}

\begin{lemma}[Characterization of smooth quotients] \label{smoothquotients}
Let $v$ be a place of $\Q$. 
Let $\mathcal{C}_v$ be a class of $\overline{\rho}$-deformations with coefficients in the ring $\Wp/p^k\Wp$ for some $k\geq 1,$
and let $\mathcal{N}_v$ be the subspace of $H^1(G_v, Ad(\overline{\rho}))$ that preserves $\mathcal{C}_v$. Suppose for any $\rho^{(k)} \in \mathcal{C}_v$, 
where
$$
\rho^{(k)}: G_v \to \operatorname{GL}(2, \Wp/p^k\Wp),
$$
that there exists some 
$$
\rho^{(k+1)}: G_v \longrightarrow \operatorname{GL}(2, \Wp/p^{k+1}\Wp)
$$
such that
$\rho^{(k)} \in \mathcal{C}_v$ and $\rho^{(k+1)} \equiv \rho^{(k)} \mod p^k.$
Then the local versal deformation ring $R_v$ has a smooth quotient which is a power series ring
in $\dim \mathcal{N}_v$ variables over $\Wp$ with tangent space $\mathcal{N}_v$. 
More precisely, there is a surjective morphism
$$
R_v \to \Wp[[T_1,\ldots, T_{n_v}]]
$$
in the category of complete Noetherian local $\Wp$-algebras such that 
$$
n_v = \dim_{\F_{p^f}} \mathcal{N}_v
$$
and 
such that the induced dual linear map
$$
(\mathfrak{m}_{R_v}/(p, \mathfrak{m}^2_{R_v}))^* \longleftarrow (\mathfrak{m}_{\Wp[[T_1,\ldots, T_{n_v}]]}/(p, \mathfrak{m}^2_{\Wp[[T_1,\ldots, T_{n_v}]]}))^*
$$
coincides with the inclusion 
$$
H^1(G_v, Ad(\overline{\rho})) \longleftarrow \mathcal{N}_v.
$$
Conversely, suppose
there is a surjective morphism
$$
R_v \to \Wp[[T_1,\ldots, T_{n_v}]]
$$
in the category of complete Noetherian local $\Wp$-algebras. Define
$\mathcal{C}_v$ as the class of $\overline{\rho}$-deformations with coefficients in $\Wp/p^k\Wp$ for some $k \geq 1$ that factor through the smooth quotient $\Wp[[T_1,\ldots, T_{n_v}]]$, and
let $\mathcal{N}_v$ be the subspace of $H^1(G_v, Ad(\overline{\rho}))$ obtained by dualizing the induced surjective linear map
$$
\mathfrak{m}_{R_v}/(p, \mathfrak{m}^2_{R_v}) \longrightarrow \mathfrak{m}_{\Wp[[T_1,\ldots, T_{n_v}]]}/(p, \mathfrak{m}^2_{\Wp[[T_1,\ldots, T_{n_v}]]}).
$$
The the deformations in $\mathcal{C}_v$ are always liftable
from $\Wp/p^k\Wp$ to $\Wp/p^{k+1}\Wp$ (and hence to $\Wp$),
and $\mathcal{N}_v$ preserves $\mathcal{C}_v$.
\end{lemma}

Next, we recall Wiles' formula (see Proposition 1.6 in \cite{Wiles}). 

\begin{proposition}[Wiles' formula] \label{Wiles}
Let $T$ be a finite set of places of $\Q$ containing 
the archimedian place $\infty$ and the the prime $p$, and consider a residual representation
$$
\overline{\rho}: G_T \to \operatorname{GL}(2,\F_{p^{f}}).
$$
Let $M$ be a Galois stable subspace of $Ad(\overline{\rho})$ or $Ad(\overline{\rho})^*$. For each $q\in S,$ let $\mathcal{L}_q$ be a subspace of $H^1(G_q, M)$ with annihilator $\mathcal{L}_q^\perp \subseteq H^1(G_q, M^*)$ under the local pairing of Galois cohomology. Then 
\begin{IEEEeqnarray*}{ll}
 & \dim H^1_\mathcal{L} (G_T, M) - \dim H^1_{\mathcal{L}^\perp} (G_T, M^*) \\
= \quad & 
\dim H^0 (G_T, M) - \dim H^0 (G_T, M^*) 
+
\sum_{q\in T} \dim \mathcal{L}_q - \dim H^0(G_q, M).  
\end{IEEEeqnarray*}
\end{proposition}

\begin{proposition}[The global setting determines the structure of deformation rings] \label{R_N}
Let $S$ be a finite set of places of $\Q$, and let $G_S =\Gal(\Q_S/\Q)$, where $\Q_S$ is the maximal extension of $\Q$ unramified outside $S$. 
Consider a residual representation
$$
\overline{\rho}: G_S \to \operatorname{GL}(2, \F_{p^f}).
$$
Suppose for each $q\in S$ there is a class $\mathcal{C}_q$ of local $\overline{\rho}|_{G_q}$-deformations that are always liftable, and let $\mathcal{N}_q$ be the subspace of $H^1(G_q, Ad(\overline{\rho}))$
that preserves $\mathcal{C}_q.$
There exists a global, versal deformation ring
 $R_{(\mathcal{N}_q)_{q\in S}}$
 parametrizing $\overline{\rho}$-deformations $\rho$ with the property that 
 $\rho|_{G_q} \in \mathcal{C}_q$ for all $q\in S$.
 The structure of the ring is determined by the data 
 $$
(H^1_\mathcal{N}(G_{S}, Ad(\overline{\rho})),H^1_{\mathcal{N}^\perp}(G_{S}, Ad(\overline{\rho})^*) )
 $$
 of the global setting, in the sense that 
 $$
 R_{(\mathcal{N}_q)_{q\in S}} \simeq 
 \W[[T_1,\ldots,T_n]]/(r_1,\ldots, r_m)
 $$
 where 
 $$
n = \dim_{\F_{p^f}} H^1_\mathcal{N}(G_{S}, Ad(\overline{\rho})) 
$$
and 
$$
m \leq \dim H^1_{\mathcal{N}^\perp}(G_{S}, Ad(\overline{\rho})^*).
$$
Let $\mathfrak{m}_{(\mathcal{N}_q)_{q\in S}}$ be the maximal ideal of $R_{(\mathcal{N}_q)_{q\in S}}$.
The tangent space of 
$R_{(\mathcal{N}_q)_{q\in S}}$ is precisely the Selmer group, i.e. 
$$
\mathfrak{m}_{(\mathcal{N}_q)_{q\in S}} /(p,\mathfrak{m}_{(\mathcal{N}_q)_{q\in S}}^2) \simeq H^1_\mathcal{N}(G_{S}, Ad(\overline{\rho})). 
$$
\end{proposition}

\section{Constructing the residual representation and a smooth quotient of the local deformation ring at 7}
We will begin by defining a certain even, 2-dimensional residual representation in characteristic 2. Let $G_\Q = \Gal(\overline{\Q}/\Q)$ denote the absolute Galois group over $\Q$.
Let $K$ be the cyclic, totally real subfield of $\Q(\zeta_7)$ of degree $3$ over $\Q$. We work with the field $K$ for convenience.
By identifying a generator of $\Gal(K/\Q)$ with the element 
$$
\begin{pmatrix}
0 & 1 \\
1 & 1
\end{pmatrix}
$$
of order 3 in $\operatorname{GL}(2,\F_2)$, we obtain a Galois representation 
$$
G_\Q \to \operatorname{GL}(2, \F_2).
$$
Let $\alpha$ be a third root of unity in $\F_4$. It turns out to be more convenient to extend scalars to $\F_4$, and study the equivalent representation
$$
G_\Q \to \operatorname{GL}(2,\F_4)
$$
defined by 
$$
\overline{\rho} := 
\begin{pmatrix}
\psi & \\
& \psi^{-1}
\end{pmatrix},
$$
where $\psi: G_\Q \to \F_4$ is the character defined by identifying a generator of $\Gal(K/\Q)$ with $\alpha \in \F_4$.
The fixed field of $\ker \overline{\rho}$, which we denote
$\Q(\overline{\rho})$, is equal to $K$.  Since $K$ is totally real, $ \overline{\rho}$ is even, and since the order of 
$\Gal(K/\Q)$ is prime to 2, $ \overline{\rho}$ is tame. 

Let $G_\Q=\Gal(\overline{\Q}/\Q)$ denote the absolute Galois group over $\Q$. For a prime $p$, let $G_p$ denote $\Gal(\overline{\Q_p}/\Q_p).$
Let $G$ be either $G_\Q$ or $G_p$ for a prime $p$, and let $\kappa: G \to \F_4^\times$ be a character. The 1-dimensional $G$-module where $G$ acts as multiplication by $\kappa$ 
will be denoted $\F_4(\kappa)$, while the 1-dimensional $G$-module with trivial action will be denoted $\F_4$.
\begin{lemma} \label{Bockle}
Let $M$ be a vector space over $\F_4$ equipped with a $G$-action, where $G$ is either a local or global Galois group.  Then
$$
\text{\upshape{Hom}}_{\F_4}( M , \F_4 ) \simeq \text{\upshape{Hom}}_{\F_2}( M , \F_2 )
$$
as $\F_4[G]$-modules.
\end{lemma}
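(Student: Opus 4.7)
The plan is to exhibit an explicit isomorphism via the field trace. Set $\mathrm{Tr}\colon \F_4 \to \F_2$ for the $\F_2$-linear trace of the extension $\F_4/\F_2$, and define
\[
\Phi\colon \mathrm{Hom}_{\F_4}(M, \F_4) \longrightarrow \mathrm{Hom}_{\F_2}(M, \F_2), \qquad \Phi(f) := \mathrm{Tr}\circ f.
\]
Since $\mathrm{Tr}$ is $\F_2$-linear and $f$ is $\F_4$-linear (hence $\F_2$-linear), this lands in the right space. The key input is that the trace pairing $\F_4 \times \F_4 \to \F_2$, $(a,b)\mapsto \mathrm{Tr}(ab)$, is non-degenerate, which furnishes a canonical $\F_4$-linear isomorphism $\F_4 \xrightarrow{\sim} \mathrm{Hom}_{\F_2}(\F_4,\F_2)$, $c \mapsto \mathrm{Tr}(c\,\cdot)$.

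Next I would verify that $\Phi$ is $\F_4[G]$-equivariant. Since $G$ acts trivially on the coefficient field $\F_4$, the $G$-action on both Hom spaces is precomposition by $g^{-1}$ on $M$, so $G$-equivariance is immediate from the definition. The $\F_4$-structure on the source can be taken as post-composition with scalar multiplication, while on the target the $\F_4$-structure must come from $M$ (since $\F_2$ carries no $\F_4$-module structure), namely $(c\cdot \varphi)(m) := \varphi(cm)$. The identity $\mathrm{Tr}(c\cdot f(m)) = \mathrm{Tr}(f(cm))$, which follows from the $\F_4$-linearity of $f$, then shows $\Phi(cf) = c\cdot \Phi(f)$. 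This is the step where I would be most careful, because the $\F_4$-structures on the two Hom spaces arise from different sides of the Hom; non-degeneracy of the trace pairing is precisely what makes this compatibility hold.

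Finally I would check bijectivity. For injectivity, suppose $\mathrm{Tr}(f(m)) = 0$ for all $m \in M$. Then for every $c \in \F_4$ we have $\mathrm{Tr}(c\cdot f(m)) = \mathrm{Tr}(f(cm)) = 0$, so non-degeneracy of the trace pairing forces $f(m)=0$. Surjectivity then follows by a dimension count over $\F_2$: in the finite-dimensional case relevant to the paper, both sides have $\F_2$-dimension $2\dim_{\F_4} M$, so an injection is an isomorphism. (If one wants to avoid finiteness, the inverse can be written down directly: for $\varphi \in \mathrm{Hom}_{\F_2}(M,\F_2)$ and $m \in M$, the $\F_2$-linear map $c \mapsto \varphi(cm)$ equals $c \mapsto \mathrm{Tr}(c\cdot f(m))$ for a unique $f(m)\in \F_4$, and this defines an $\F_4$-linear inverse to $\Phi$.) No step presents a real obstacle; the whole argument is a structured unpacking of non-degeneracy of the trace pairing on $\F_4/\F_2$.
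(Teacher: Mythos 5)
Your proof is correct. The paper does not actually argue the lemma: it attributes the statement to B\"ockle, cites Proposition 32 of \cite{HR}, and only records the one substantive convention, namely that $\mathrm{Hom}_{\F_2}(M,\F_2)$ is made into an $\F_4$-vector space through the $\F_4$-structure on $M$, i.e.\ $(c\cdot\varphi)(m)=\varphi(cm)$ --- exactly the structure you identified on the target. What you have written is therefore a self-contained substitute for the citation, and it is the standard argument: $f\mapsto \mathrm{Tr}\circ f$ is $G$-equivariant because $G$ acts trivially on the coefficients, it is $\F_4$-semilinear in the required sense because $\mathrm{Tr}(c\,f(m))=\mathrm{Tr}(f(cm))$, and it is bijective by non-degeneracy of the trace pairing (your explicit inverse also handles the infinite-dimensional case, which the dimension count does not). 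Two small remarks: the compatibility $\Phi(cf)=c\cdot\Phi(f)$ uses only the $\F_4$-linearity of $f$, not non-degeneracy, which enters only for injectivity and surjectivity; and the trace could be replaced by any nonzero $\F_2$-linear functional $\lambda:\F_4\to\F_2$, since $(a,b)\mapsto\lambda(ab)$ is automatically non-degenerate over a field. Your version buys the reader an explicit, verifiable isomorphism in place of an external reference, at no real cost in length.
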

\begin{proof}
This Lemma is due to G. B\"{o}ckle and appears as Proposition 32 in \cite{HR}. 
We endow 
$\text{\upshape{Hom}}_{\F_2}( M , \F_2 )$ with the structure of a vector space over $\F_4$ by using the
$\F_4$-vector space structure on $M$.
\end{proof}
If $M$ is an $\F_4[G]$-module, we let $M^*$ denote the dual module of $M$. 
By Lemma \ref{Bockle}, we may think of $M^*$ as either an $\F_2[G]$-module or as an $\F_4[G]$-module.  

\begin{lemma} \label{character}
Let $\kappa: G \to \F_4^\times$ be a character, where $G$ can be either $G_\Q$ or $G_p$ for a prime $p$. Then
$$
\F_4(\kappa)^* = \F_4(\kappa^{-1}).
$$
\end{lemma}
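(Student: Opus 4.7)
The plan is to use Lemma \ref{Bockle} to replace the $\F_2$-dual by the $\F_4$-dual, and then compute the Galois action on a one-dimensional $\F_4$-Hom space directly.

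First, by Lemma \ref{Bockle} applied to $M = \F_4(\kappa)$, we have an isomorphism of $\F_4[G]$-modules
$$
\F_4(\kappa)^* = \mathrm{Hom}_{\F_2}(\F_4(\kappa),\F_2) \;\simeq\; \mathrm{Hom}_{\F_4}(\F_4(\kappa),\F_4).
$$
It therefore suffices to identify the right-hand side with $\F_4(\kappa^{-1})$.

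Next, since $\F_4(\kappa)$ is one-dimensional over $\F_4$, the $\F_4$-vector space $\mathrm{Hom}_{\F_4}(\F_4(\kappa),\F_4)$ is also one-dimensional, so as an $\F_4[G]$-module it is given by some character $\chi: G \to \F_4^\times$; I just need to check that $\chi = \kappa^{-1}$. Fix a nonzero $\F_4$-linear map $f: \F_4(\kappa) \to \F_4$ and let $v$ be an $\F_4$-basis of $\F_4(\kappa)$. For $g \in G$, the standard contragredient action is
$$
(g\cdot f)(v) = f(g^{-1}\cdot v) = f(\kappa(g^{-1}) v) = \kappa(g)^{-1} f(v),
$$
so $g$ acts on the Hom space by the scalar $\kappa(g)^{-1}$. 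Hence $\chi = \kappa^{-1}$, and $\mathrm{Hom}_{\F_4}(\F_4(\kappa),\F_4) \simeq \F_4(\kappa^{-1})$ as $\F_4[G]$-modules.

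Combining the two isomorphisms gives $\F_4(\kappa)^* \simeq \F_4(\kappa^{-1})$ as desired. There is no real obstacle here; the only subtlety is remembering that the lemma is an assertion about $\F_4[G]$-module (not merely $\F_2[G]$-module) structures, which is exactly what Lemma \ref{Bockle} is designed to supply.
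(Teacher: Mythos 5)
Your proof is correct, and it is worth noting that it proceeds somewhat differently from the paper's. The paper's proof is a one-liner invoking the standard duality formula for characters: it asserts $\F_4(\kappa)^* \simeq \F_4(\kappa^{-1}\chi)$ with $\chi$ the cyclotomic character --- i.e.\ it implicitly treats $M^*$ as the Tate dual $\mathrm{Hom}(M,\mu_2)$ --- and then observes that $\chi$ is trivial mod $2$. You instead work directly from the paper's literal definition $M^* = \mathrm{Hom}(M,\F_2)$, use Lemma \ref{Bockle} to pass to the one-dimensional $\F_4$-dual, and compute the contragredient action explicitly, so the cyclotomic character never appears. The two routes agree because in characteristic $2$ one has $\mu_2 \simeq \F_2$ with trivial Galois action, so the Tate dual and the plain $\F_2$-linear dual coincide; your version makes the lemma a direct consequence of the definitions (and of Lemma \ref{Bockle}, which is needed anyway to give $M^*$ its $\F_4[G]$-structure), while the paper's version makes visible the general mechanism --- the cyclotomic twist --- that would be present in odd characteristic and whose vanishing mod $2$ is a recurring theme later in the paper (e.g.\ in the self-duality of the adjoint used for global reciprocity). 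Both are complete proofs of the statement as it is used downstream, in particular in the local duality computations.
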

\begin{proof}
It holds that $\F_4(\kappa)^*\simeq \F_4(\kappa^{-1}\chi)$ where $\chi$ is the cyclotomic character, and since we are in characteristic 2, $\chi$ is trivial. 
\end{proof}

The field $K$ cut out by $\overline{\rho}$ is totally ramified at 7 (and unramified everywhere else). The restriction $\overline{\rho}|_{G_7}$ factors through the tame quotient, which is topologically generated by $\sigma_7$ and $\tau_7$ subject to the relation $\sigma_7 \tau_7 \sigma_7^{-1}=\tau_7^{7}$, where 
$\sigma_7$ is a lift of Frobenius and $\tau_7$ generates inertia.
Letting $\alpha \in \F_4$ be the third root of unity defined above,
$$ \overline{\rho}|_{G_7}:
\sigma_7 \mapsto I, \quad \tau_7 \mapsto 
\begin{pmatrix}
\alpha & 0 \\
0 & \alpha^{-1}
\end{pmatrix}.
$$
Evidently,
$$
Ad(\overline{\rho}|_{G_7}) = 
\F_4(\psi^{-1}) 
\oplus \F_4
\oplus \F_4
\oplus \F_4(\psi),
$$
so $\dim H^0(G_7, Ad(\overline{\rho})) = 2$.
By Lemma \ref{character}, 
$$
Ad(\overline{\rho}|_{G_7})^* = 
\F_4(\psi^{}) 
\oplus \F_4
\oplus \F_4
\oplus \F_4(\psi^{-1}).
$$
We conclude that
$\dim H^0(G_7,Ad(\overline{\rho})^*)=2$, and by local duality of Galois cohomology $\dim H^2(G_7, Ad(\overline{\rho})) = 2$ as well.  By the local Euler-Poincar\'{e} characteristic, $\dim H^1(G_7, Ad(\overline{\rho})) = 4$. 
Let 
$\widetilde{\psi}$ be the Teichm\"{u}ller lift of $\psi$.
 Let $\mathcal{C}_7$ be the set of lifts of $\overline{\rho}|_{G_7}$ 
 of the form 
$$ 
\sigma_7 \mapsto 
\begin{pmatrix}
1+2x & 0 \\
0 & 1+2y
\end{pmatrix}
, \quad \tau_7 \mapsto 
\begin{pmatrix}
\widetilde{\alpha} & 0 \\
0 & \widetilde{\alpha}^{-1}
\end{pmatrix},
$$
where $x,y$ are arbitrary elements in $\W/2^k\W$ for $k\geq 1$, and $\widetilde{\alpha}\in \W$ is a lift of $\alpha\in \F_4$. 
By Lemma \ref{smoothquotients}, these two degrees of freedom give rise to a smooth quotient of $R_7$ which is isomorphic to a power series ring over $\W$ in two variables.
We define two 1-cocycles as follows:
$$
f_7^{(1)}: 
\sigma_7 \mapsto 
\begin{pmatrix}
1 & 0 \\
0 & 0
\end{pmatrix}
, \quad \tau_7 \mapsto 
\begin{pmatrix}
0 & 0 \\
0 & 0
\end{pmatrix},
\quad 
f_7^{(2)}: 
\sigma_7 \mapsto 
\begin{pmatrix}
0 & 0 \\
0 & 1
\end{pmatrix}
, \quad \tau_7 \mapsto 
\begin{pmatrix}
0 & 0 \\
0 & 0
\end{pmatrix}.
$$
To see that these functions define 1-cocycles, note that their corresponding lifts to the dual numbers send inertia to the identity, so the relation $\sigma_7 \tau_7 \sigma_7^{-1}=\tau_7 ^7$ is trivially satisfied.

\begin{lemma} \label{N_7independent}
The cohomology classes represented by $f_7^{(1)}$ and $f_7^{(2)}$ are linearly independent in $H^1(G_7, Ad(\overline{\rho}))$.
\end{lemma}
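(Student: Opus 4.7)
The plan is to exploit the fact that $\overline{\rho}(\sigma_7) = I$, which forces every coboundary to vanish at $\sigma_7$. Then the linear independence in $H^1$ will reduce directly to linear independence of the diagonal matrices $f_7^{(1)}(\sigma_7) = \mathrm{diag}(1,0)$ and $f_7^{(2)}(\sigma_7) = \mathrm{diag}(0,1)$ in the adjoint module.

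More precisely, recall that the coboundary attached to $M \in Ad(\overline{\rho}) = M_2(\F_4)$ is the map $db_M(g) = \overline{\rho}(g)\, M\, \overline{\rho}(g)^{-1} - M$. Because $\overline{\rho}(\sigma_7) = I$, conjugation by $\overline{\rho}(\sigma_7)$ acts trivially on $Ad(\overline{\rho})$, so $db_M(\sigma_7) = 0$ for every $M$. The first step is therefore to write down this observation, which says that the evaluation-at-$\sigma_7$ map
\[
\mathrm{ev}_{\sigma_7}\colon H^1(G_7, Ad(\overline{\rho})) \longrightarrow Ad(\overline{\rho})
\]
is well defined on cohomology classes (not merely on cocycles).

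Next, I would compute the images of our two classes under $\mathrm{ev}_{\sigma_7}$: they are the matrix units $\mathrm{diag}(1,0)$ and $\mathrm{diag}(0,1)$, which are plainly $\F_4$-linearly independent in $Ad(\overline{\rho})$. Hence for any $a,b\in \F_4$ with $a f_7^{(1)} + b f_7^{(2)}$ a coboundary, evaluation at $\sigma_7$ gives $\mathrm{diag}(a,b) = 0$, forcing $a = b = 0$. This establishes the claim.

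There is no serious obstacle: the only thing to check carefully is that $f_7^{(1)}$ and $f_7^{(2)}$ really are cocycles, which the paper has already noted, and that $\mathrm{ev}_{\sigma_7}$ descends to cohomology, which is immediate from $\overline{\rho}(\sigma_7) = I$. The argument is essentially a one-line observation after setting up the right evaluation map.
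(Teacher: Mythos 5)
Your proof is correct, but it takes a different route from the paper's. The paper decomposes $Ad(\overline{\rho})$ into the Galois-stable diagonal lines $\bigl(\begin{smallmatrix} * & 0 \\ 0 & 0 \end{smallmatrix}\bigr)$ and $\bigl(\begin{smallmatrix} 0 & 0 \\ 0 & * \end{smallmatrix}\bigr)$, observes that $f_7^{(1)}$ and $f_7^{(2)}$ lie in the cohomology of the two different summands, and uses that cohomology commutes with direct sums; this reduces independence to the easy (though unstated) fact that each class is nonzero in its own summand. You instead note that $\overline{\rho}(\sigma_7)=I$, so $\sigma_7$ acts trivially on $Ad(\overline{\rho})$, every coboundary vanishes at $\sigma_7$, and evaluation at $\sigma_7$ therefore descends to an $\F_4$-linear map $H^1(G_7, Ad(\overline{\rho})) \to Ad(\overline{\rho})$; since the values $\mathrm{diag}(1,0)$ and $\mathrm{diag}(0,1)$ are independent, no nontrivial combination of the two classes can be a coboundary, and in particular each class is itself nonzero, so your argument is self-contained on that point. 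Your route is the more elementary one and applies verbatim to any family of unramified cocycles whose values at Frobenius are linearly independent; the paper's direct-sum decomposition costs slightly more but is reused immediately, e.g.\ in Lemma \ref{H^1(G_7)} and in the identification of $\mathcal{N}_7$.
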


\begin{proof}
The $\F_4$-spaces
$
\begin{pmatrix}
* & 0 \\
0 & 0
\end{pmatrix}$ and 
$
\begin{pmatrix}
0 & 0 \\
0 & *
\end{pmatrix}
$
are both Galois stable subspaces of the adjoint whose sum is direct, and we observe that $f_7^{(1)}$ lives in 
$H^1(G_7, \begin{pmatrix}
* & 0 \\
0 & 0
\end{pmatrix})$, while $f_7^{(2)}$ lives in 
$H^1(G_7, \begin{pmatrix}
0 & 0 \\
0 & *
\end{pmatrix})$. Since cohomology commutes with direct sums, the lemma follows.
\end{proof}

Let $\mathcal{N}_7$ denote the subspace of $H^1(G_7, Ad(\overline{\rho}))$  that preserves $\mathcal{C}_7$.

\begin{lemma}\label{N_7}
The space $\mathcal{N}_7$ is spanned by $f_7^{(1)}$ and $f_7^{(2)}$ and is equal to space of unramified cohomology classes, i.e.
$$
\mathcal{N}_7 = H_{\operatorname{unr}}^1(G_7, Ad(\overline{\rho})),
$$
and 
$$
\dim \mathcal{N}_7 = \dim H^0(G_7, Ad(\overline{\rho})).
$$
\end{lemma}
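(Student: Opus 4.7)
The plan proceeds in three steps. First, I would verify that both cohomology classes $f_7^{(1)}$ and $f_7^{(2)}$ lie simultaneously in $\mathcal{N}_7$ and in $H^1_{\text{unr}}(G_7, Ad(\overline{\rho}))$. Unramifiedness is immediate because both cocycles vanish on $\tau_7$, a generator of inertia. Membership in $\mathcal{N}_7$ is transparent from the definition of $\mathcal{C}_7$: starting from the Teichm\"uller lift $\rho_0$ at parameters $(x,y) = (0,0)$, the twist $(I + 2(a f_7^{(1)} + b f_7^{(2)}))\rho_0$ is precisely the element of $\mathcal{C}_7$ with parameters $(x,y) = (a,b)$. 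Combined with Lemma \ref{N_7independent}, this already gives a two-dimensional subspace of $\mathcal{N}_7 \cap H^1_{\text{unr}}(G_7, Ad(\overline{\rho}))$.

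Second, I would compute $\dim H^1_{\text{unr}}(G_7, Ad(\overline{\rho})) = 2$ using the standard identification $H^1_{\text{unr}}(G_7, V) = V^{I_7}/(\mathrm{Frob}-1)V^{I_7}$ together with the elementary fact that on a finite-dimensional $\F_4$-space the $\F_4$-dimensions of the kernel and cokernel of $\mathrm{Frob}-1$ coincide. This yields
$$
\dim H^1_{\text{unr}}(G_7, Ad(\overline{\rho})) = \dim H^0(G_7, Ad(\overline{\rho})) = 2,
$$
so on dimensional grounds $f_7^{(1)}, f_7^{(2)}$ already span $H^1_{\text{unr}}(G_7, Ad(\overline{\rho}))$. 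It then suffices to establish the inclusion $\mathcal{N}_7 \subseteq H^1_{\text{unr}}(G_7, Ad(\overline{\rho}))$; both remaining claims in the lemma will then follow.

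For this third and main step, let $f$ be a cocycle whose class lies in $\mathcal{N}_7$. Then the twisted lift $(I + 2f)\rho_0$ is strictly equivalent, via conjugation by some $U = I + 2M$, to an element of $\mathcal{C}_7$ whose value at $\tau_7$ is $d := \mathrm{diag}(\widetilde{\alpha}, \widetilde{\alpha}^{-1})$. A routine expansion mod $4$ of the conjugation relation at $\tau_7$ gives
$$
f(\tau_7) = M - dMd^{-1} = -(\mathrm{Ad}(d) - 1)(M).
$$
Because $d$ has exact order $3$ and $\alpha^{\pm 2} \neq 1$ in $\F_4$, the operator $\mathrm{Ad}(d) - 1$ annihilates diagonal matrices and is an isomorphism on off-diagonal matrices, so its image is exactly the off-diagonal part of $Ad(\overline{\rho})$. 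This coincides with the space of coboundary values at $\tau_7$: indeed, $\overline{\rho}(\sigma_7) = I$ forces every coboundary to vanish at $\sigma_7$, while the coboundary at $\tau_7$ is precisely $N \mapsto dNd^{-1} - N$. Hence one may modify $f$ by a coboundary to arrange $f(\tau_7) = 0$ without altering $f(\sigma_7)$, proving that $f$ is cohomologous to an unramified cocycle. I expect the main technical obstacle to be this mod-$4$ bookkeeping at $\tau_7$, in particular tracing how strict equivalence by $I+2M$ translates into adjustment by a coboundary; the first two steps are essentially dimension counts and direct verifications.
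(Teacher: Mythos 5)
Your proof is correct, but it reaches the crucial upper bound on $\mathcal{N}_7$ by a different route than the paper. The paper's proof is deformation-theoretic: it takes $\dim \mathcal{N}_7 = 2$ directly from the fact that $\mathcal{C}_7$ consists of the lifts factoring through the smooth two-variable quotient of $R_7$ (so $\mathcal{N}_7$ is its two-dimensional cotangent space), notes that the two independent unramified classes $f_7^{(1)}, f_7^{(2)}$ preserve $\mathcal{C}_7$, and then compares with $\dim H^1_{\text{unr}}(G_7, Ad(\overline{\rho})) = 2$. You instead prove the inclusion $\mathcal{N}_7 \subseteq H^1_{\text{unr}}(G_7, Ad(\overline{\rho}))$ by a direct cocycle computation: a class preserving the Teichm\"uller point mod $4$ must have $f(\tau_7)$ in the image of $\mathrm{Ad}(d)-1$ (with $d$ the image of $\tau_7$), which is exactly the space of coboundary values at $\tau_7$, while coboundaries vanish at $\sigma_7$ because $\overline{\rho}(\sigma_7)=I$; so every class in $\mathcal{N}_7$ has an unramified representative, and the count $\dim H^1_{\text{unr}} = \dim H^0(G_7, Ad(\overline{\rho})) = 2$ together with your two explicit classes forces all three equalities. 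Your route avoids relying on the smoothness and dimension of the quotient of $R_7$ (which the paper only asserts), at the cost of the mod-$4$ bookkeeping, and it actually shows a bit more: preserving the single Teichm\"uller lift mod $4$ already forces unramifiedness. Two points you should make explicit to be complete: membership of $f_7^{(1)}, f_7^{(2)}$ in $\mathcal{N}_7$ requires that they preserve every element of $\mathcal{C}_7$ over $\W/2^n\W$, not only the base point mod $4$ -- the verification is the same one-line computation, since the twist merely shifts the parameters $x,y$ and fixes the value at $\tau_7$; and to pass from $f'(\tau_7)=0$ to unramifiedness of $f'$, note that wild inertia is pro-$7$ and $Ad(\overline{\rho})$ is $2$-torsion, so classes are represented by cocycles on the tame quotient, where the image of inertia is topologically generated by $\tau_7$ and the cocycle relation propagates the vanishing from $\tau_7$ to all of inertia. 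Neither point is a gap in substance; the paper leaves analogous verifications implicit.
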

\begin{proof}
There is a smooth quotient of the local deformation ring $R_7$ in two variables, and $\mathcal{C}_7$ consists of those deformations to $\W/2^n \W$ that factor through this smooth qoutient. It follows that 
$$
\dim \mathcal{N}_7 = 2. 
$$
The two unramified cohomology classes
$f_7^{(1)} $ and $f_l^{(2)}$ 
preserve $\mathcal{C}_7$. Therefore
$\mathcal{N}_7 = \operatorname{span}_{\F_4} \{ f_7^{(1)}, f_7^{(2)} \} $.
Since $\mathcal{N}_7 \subseteq H^1_{\operatorname{unr}}(G_7, Ad(\overline{\rho}))$, 
and 
$$
\dim H^1_{\operatorname{unr}}(G_l, Ad(\overline{\rho})) =2,$$ 
the lemma follows.
\end{proof}

\begin{lemma} \label{H^1(G_7)}
It holds that
$$
H^1(G_7,Ad(\overline{\rho})) = 
H^1(G_7, \begin{pmatrix}
* & 0 \\
0 & 0
\end{pmatrix})
\oplus
H^1(G_7, \begin{pmatrix}
0 & 0 \\
0 & *
\end{pmatrix}).
$$
\end{lemma}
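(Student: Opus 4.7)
The plan is to use the explicit decomposition of $Ad(\overline{\rho}|_{G_7})$ already recorded above, namely
$$
Ad(\overline{\rho}|_{G_7}) = \F_4(\psi^{-1}) \oplus \F_4 \oplus \F_4 \oplus \F_4(\psi),
$$
where the two middle trivial summands are exactly $\begin{pmatrix} * & 0 \\ 0 & 0 \end{pmatrix}$ and $\begin{pmatrix} 0 & 0 \\ 0 & * \end{pmatrix}$, while the off-diagonal summands carry the characters $\psi^{\pm 1}$. Since Galois cohomology commutes with finite direct sums of modules, the lemma is equivalent to the vanishing
$$
H^1(G_7, \F_4(\psi)) = 0 = H^1(G_7, \F_4(\psi^{-1})).
$$

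To establish this vanishing I would compute directly on the tame quotient, which is legitimate because $\F_4$ has order coprime to $7$. A $1$-cocycle $f$ valued in $\F_4(\psi)$ is determined by $a = f(\sigma_7)$ and $b = f(\tau_7)$ subject to the relation forced by $\sigma_7 \tau_7 = \tau_7^{7} \sigma_7$. Expanding both sides using $\psi(\sigma_7) = 1$, $\psi(\tau_7) = \alpha$, and the identity $1+\alpha+\alpha^2 = 0$ in $\F_4$, one finds that the only constraint is $(1-\alpha)a = 0$, forcing $a = 0$; on the other hand the coboundary $g \mapsto (\psi(g)-1)v$ contributes $(\alpha - 1)v$ on $\tau_7$, which sweeps out the full $1$-dimensional space of possible $b$'s. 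Hence cocycles and coboundaries both have dimension $1$, and $H^1(G_7, \F_4(\psi)) = 0$. The same computation applies verbatim to $\F_4(\psi^{-1})$.

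As a sanity check, this is consistent with the earlier dimension count: we have $\dim H^1(G_7, Ad(\overline{\rho})) = 4$, while a trivial tame cocycle count gives $\dim H^1(G_7, \F_4) = 2$ (both $f(\sigma_7)$ and $f(\tau_7)$ are free, since the relation is automatic in characteristic $2$ and the action is trivial). The two diagonal summands therefore already contribute a $4$-dimensional subspace of the $4$-dimensional total, and one could equally deduce the lemma from this dimension match once the direct-sum decomposition of $Ad(\overline{\rho}|_{G_7})$ is in place.

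There is no serious obstacle: the only content is the small cocycle computation that kills the off-diagonal classes, and this is ultimately a consequence of the fact that the local residual representation is diagonal at $7$ with non-trivial tame inertia acting through a character of order prime to the residue characteristic.
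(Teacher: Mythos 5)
Your argument is correct, and it reaches the vanishing of the off-diagonal contributions by a different route than the paper. The reduction is the same in both cases: $Ad(\overline{\rho}|_{G_7})$ splits as the direct sum of the two diagonal lines (trivial $G_7$-modules) and the two off-diagonal lines $\F_4(\psi^{-1})$, $\F_4(\psi)$, and since cohomology commutes with direct sums the lemma amounts to $H^1(G_7,\F_4(\psi^{\pm1}))=0$. The paper gets this vanishing formally: $H^0(G_7,\F_4(\psi^{\pm1}))=0$, local duality gives $H^2(G_7,\F_4(\psi^{\pm1}))=0$ because the dual module is $\F_4(\psi^{\mp1})$ (Lemma \ref{character}) and also has no invariants, and the local Euler--Poincar\'e characteristic is trivial at $7$ since the module has order prime to $7$, forcing $H^1=0$; it then notes the two diagonal summands each contribute a $2$-dimensional $H^1$. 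You instead compute cocycles explicitly on the tame quotient: the relation $\sigma_7\tau_7\sigma_7^{-1}=\tau_7^7$ forces $f(\sigma_7)=0$ (your $(1-\alpha)a=0$; in characteristic $2$ the sign is immaterial, and indeed $1+\alpha=\alpha^2\neq 0$), while the coboundaries $g\mapsto(\psi(g)-1)v$ exhaust the remaining one-parameter family, so $H^1=0$. This is perfectly valid; the passage to the tame quotient is justified exactly as you say, because wild inertia is pro-$7$ and the coefficients form a $2$-group, so inflation from the tame quotient is an isomorphism, and cocycles on the tame quotient are determined by their values on $\sigma_7,\tau_7$ subject to the one relation. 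Your computation is more hands-on and self-contained (it also reproves the dimension counts used elsewhere), whereas the paper's duality/Euler-characteristic argument is shorter and generalizes immediately to any prime $q\neq 2$ at which the relevant character is ramified or otherwise has no invariants, without writing down cocycles.
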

\begin{proof}
Both of the cohomology groups in the direct sum on the right have dimension 2, while it follows from local duality and the local Euler-Poincar\'{e} characteristic that 
$$
H^1(G_7, 
\begin{pmatrix}
0 & * \\
0 & 0
\end{pmatrix}
) = 0, \quad
H^1(G_7, 
\begin{pmatrix}
0 & 0 \\
* & 0
\end{pmatrix}
) = 0.
$$
\end{proof}

\section{Smooth quotients at 2 and 
\texorpdfstring{$\infty$}{infinity}
} \label{2 and infinity}
Since Frobenius at 2 in $K$ has order 3,  $$\overline{\rho}|_{G_2}=
\begin{pmatrix}
\psi & \\
 & \psi^{-1}
\end{pmatrix}.
$$
Let $R_2$ denote the local deformation ring at 2. 
\begin{proposition} 
There is a smooth quotient of $R_2$ which is a power series ring in 5 variables, i.e. there is a surjective morphism
\[
\begin{tikzcd} 
R_2 \arrow[r, twoheadrightarrow] & \W [[T_1,\ldots,T_5]].
\end{tikzcd}
\]
\end{proposition}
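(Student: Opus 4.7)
The strategy is to construct the surjection $R\twoheadrightarrow \W[[T_1,\ldots,T_6]]$ by exhibiting an explicit smooth $6$-parameter family of lifts of $\overline{\rho}|_{G_2}$, split according to the decomposition of the adjoint into four $G_2$-stable summands. Since $\overline{\rho}|_{G_2}=\begin{pmatrix}\psi & \\ & \psi^{-1}\end{pmatrix}$ with $\psi$ of order $3$, one has $Ad(\overline{\rho}|_{G_2}) = \F_4\oplus \F_4\oplus \F_4(\psi)\oplus \F_4(\psi^{-1})$, so the local deformation problem breaks into one piece per summand.

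I would first compute the relevant local cohomology. For the trivial summand, local class field theory gives $H^1(G_2,\F_4)\simeq \mathrm{Hom}_{\mathrm{cts}}(\Q_2^\times,\F_4)$; the pro-$2$ decomposition $\Q_2^\times\cong \Z\times \Z/2\times (1+4\Z_2)$ yields $\dim_{\F_4}H^1(G_2,\F_4)=3$, while local Tate duality (using that the mod-$2$ cyclotomic character is trivial) gives $\dim H^0(G_2,\F_4)=\dim H^2(G_2,\F_4)=1$. For each twisted summand $\F_4(\psi^{\pm 1})$, Frobenius at $2$ acts through the order-$3$ character $\psi$, forcing $H^0=H^2=0$; the local Euler--Poincar\'e formula then gives $\dim H^1=1$. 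In total $\dim H^1(G_2,Ad(\overline{\rho}))=8$ and $\dim H^2(G_2,Ad(\overline{\rho}))=2$, consistent with a smooth quotient of relative $\W$-dimension $8-2=6$.

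Next I would assemble the four smooth pieces individually. For each diagonal character the universal character deformation ring is $\W[[X_1,X_3]][X_2]/\bigl(X_2(X_2+2)\bigr)$, with the single relation arising from the $\Z/2$-torsion in $\Q_2^\times$ generated by $-1$. Imposing the smoothing condition $X_2=0$, equivalently $\chi_i(-1)=1$, cuts out a smooth quotient $\W[[T_1,T_2]]$ of relative $\W$-dimension $2$; applying this to both $\psi$ and $\psi^{-1}$ gives $4$ diagonal parameters. For each off-diagonal summand, the vanishing $H^2(G_2,\F_4(\psi^{\pm 1}))=0$ makes cocycle deformations obstruction-free, so they extend freely from $\W/2^n\W$ to $\W/2^{n+1}\W$, yielding a $1$-dimensional smooth family each; the two off-diagonal summands contribute $2$ more parameters, for $6$ in total.

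The main obstacle is verifying that the four smooth families fit together into a single honest representation. Writing a candidate lift as $\rho=\begin{pmatrix}\chi_1 & \eta_1\\ \eta_2 & \chi_2\end{pmatrix}$, the relation $\rho(gh)=\rho(g)\rho(h)$ forces on the $(1,1)$ entry the identity $\chi_1(gh)=\chi_1(g)\chi_1(h)+\eta_1(g)\eta_2(h)$ (and similarly for $\chi_2$), so $\chi_i$ becomes merely a quasi-character once both off-diagonal perturbations are active. I would handle this inductively: assuming the $6$-parameter family has been built mod $2^n$, the obstruction to lifting mod $2^{n+1}$ lies in $H^2(G_2,Ad(\overline{\rho}))$, which is $2$-dimensional and concentrated on the two diagonal $\F_4$-summands. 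The two smoothing conditions $\chi_1(-1)=\chi_2(-1)=1$ are designed precisely to trivialize these two obstruction directions, absorbing both the character-lifting obstructions and the image of the cup product $H^1(G_2,\F_4(\psi))\otimes H^1(G_2,\F_4(\psi^{-1}))\to H^2(G_2,\F_4)$ coming from the cross terms. Consequently the family lifts compatibly at every stage, pro-representing a smooth ring $\W[[T_1,\ldots,T_6]]$ that receives the required surjection from $R$.
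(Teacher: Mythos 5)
Up to the assembly step, your argument coincides with the paper's proof of Proposition \ref{R_2}: the same decomposition of $Ad(\overline{\rho}|_{G_2})$ into the four $G_2$-stable summands, the same cohomology counts ($\dim H^1=8$, $\dim H^2=2$), the same observation that the two off-diagonal problems are unobstructed and each contribute one smooth parameter, and the same smoothing of each diagonal character piece by forcing triviality on the $\Z/2$-torsion of $\Q_2^{\times}$ (your condition $\chi_i(-1)=1$ is exactly the paper's requirement that the deformation $\xi$ kill the torsion factor of $\widehat{\Z}\times\Z_2\times\Z/2$). The divergence — and the gap — is at the step you yourself identify as the main obstacle: gluing the four families into a single $6$-parameter homomorphism. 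Your inductive paragraph does not prove this. The conditions $\chi_1(-1)=\chi_2(-1)=1$ only remove the obstruction internal to each diagonal character deformation; they say nothing about the cross-term obstruction created by turning on both off-diagonal directions simultaneously. That obstruction is controlled by the cup product $H^1(G_2,\F_4(\psi))\otimes H^1(G_2,\F_4(\psi^{-1}))\to H^2(G_2,\F_4)$ (composed with the matrix pairing into the diagonal), and by local Tate duality — using $\F_4(\psi)^*\simeq\F_4(\psi^{-1})$ since the mod $2$ cyclotomic character is trivial — this pairing is non-degenerate; as both $H^1$'s are one-dimensional over $\F_4$, any nonzero classes pair nontrivially. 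Hence the naive matrix built from two characters and two cocycles is genuinely not a homomorphism, the diagonal entries must be corrected at every order, and the content of the proposition is precisely that these corrections can be made while keeping six independent parameters and a power-series quotient. Asserting that the smoothing conditions ``absorb'' this obstruction assumes the conclusion. Note also that the mere existence of independent smooth quotients does not imply a larger smooth quotient: $\W[[x,y]]/(xy)$ has two independent one-variable smooth quotients but no surjection onto a two-variable power series ring, so some additional argument is unavoidable here.

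For comparison, the paper assembles the pieces by forming the completed tensor product $\widehat{\bigotimes}_{i=1}^{4}R^{(i)}\simeq\W[[T_1,\ldots,T_6]]$ of the four smooth quotients, taking the induced map from $R$, and deducing surjectivity from the injectivity of $\bigoplus_{i}\mathcal{N}_2^{(i)}\to H^1(G_2,Ad(\overline{\rho}))$ on tangent spaces. To repair your write-up you would either have to justify the existence of such a map from $R$ to the completed tensor product (equivalently, actually produce a deformation of $\overline{\rho}|_{G_2}$ over $\W[[T_1,\ldots,T_6]]$ specializing to the four families), or carry out honestly the order-by-order construction you sketch, showing at each stage that the diagonal corrections absorbing the cross-term class exist and do not consume any of the six free parameters; the paragraph as written does neither.
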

\begin{proof}
Evidently,
$$
Ad(\overline{\rho}|_{G_2}) \simeq 
\F_4(\psi^{-1}) \oplus \F_4 \oplus \F_4 \oplus \F_4(\psi^{}).
$$
By Lemma \ref{character}, 
$$
Ad(\overline{\rho}|_{G_2})^* \simeq 
\F_4(\psi^{}) \oplus \F_4 \oplus \F_4 \oplus \F_4(\psi^{-1}).
$$
Clearly, $\dim H^0(G_2, Ad(\overline{\rho})) = 2$. By local duality,  $\dim H^2(G_2, Ad(\overline{\rho})) = 2$. The local Euler-Poincar\'{e} characteristic implies that $\dim H^1(G_2, Ad(\overline{\rho})=8$.
We conclude that
$$
R_2 \simeq \W[[z_1,z_2,z_3,z_4,z_5,z_6,z_7, z_8]]/(r_1,r_2)
$$
for some relations $r_1, r_2.$
In \cite{Paskunas}, V. Pa\v{s}k\={u}nas has computed an explicit presentation of the ring $R_2$, namely 
$$
R_2 \simeq \W[[z_1,z_2,z_3,z_4,z_5,x,y, u]]/((1+u)^2-1, 
z_5^2 - 2z_5 + xy).
$$
This ring has a smooth quotient in 5 variables. For example, 
there is a surjective morphism of the form  
\begin{IEEEeqnarray*}{rCl}
R_2 & \longrightarrow & \W[[z_1,z_2,z_3,z_4, x]], \\
(z_1,z_2,z_3,z_4,z_5,x,y, u) & \longmapsto &(z_1,z_2,z_3,z_4,0,x,0,0).
\end{IEEEeqnarray*}
We thank the referee for directing us to Pa\v{s}k\={u}nas' work. 
\end{proof}

\begin{proposition} \label{R_2}
    The class $\mathcal{C}_2$ of deformations
    $$
\rho^{(k)}: G_2 \to \operatorname{GL}(2, \W/2^k\W)
$$
for $k\in \Z_{\geq 1}$
of the form 
$$
\rho^{(k)}  
=
\begin{pmatrix}
    \widetilde{ \psi} \phi_1 &  f \\
    0 &  \widetilde{ \psi}^{-1} \phi_2
\end{pmatrix}
$$
where $\phi_1,\phi_2 \in \operatorname{Hom}(G_2, \W^\times)$ are such that 
$
\phi_i \equiv 1 \mod 2$ for 
$i=1,2$
are always liftable. More precisely, 
let $k\geq 1$, and let $\rho^{(k)} \in \mathcal{C}_2$. Then there exists 
    $$\rho^{(k+1)} : G_2 \to GL(2,\W/2^{k+1}\W)$$
    such that $\rho^{(k+1)} \in \mathcal{C}_2$ and 
    $$
    \rho^{(k+1)} \equiv \rho^{(k)} \mod 2^k.
    $$
\end{proposition}

\begin{proof}
The obstruction to lifting $\rho^{(k)} $ to a representation 
$$\rho^{(k+1)} : G_2 \to GL(2,\W/2^{k+1}\W)$$
of the form 
$$
\rho^{(k+1)} =
\begin{pmatrix}
    \widetilde{ \psi} \phi_1 & f^{(k+1)} \\
    0 & \phi_2
\end{pmatrix}
$$
is a cohomology class in 
$$
H^2(G_2, 
\begin{pmatrix}
    & * \\
    &
\end{pmatrix}
) = 0.
$$
(See Lemma 1 and Lemma 2 in \cite{SFM} for related computations.)
It follows that there exists a representation $\rho^{(k+1)}$ of the form above such that 
$\rho^{(k+1)} \equiv \rho^{(k)} \mod 2^k.$
By definition, $\rho^{(k+1)} \in \mathcal{C}_2$. 
\end{proof}

Note that 
$$
H^1(G_2, \begin{pmatrix}
    * & \\
    & *
\end{pmatrix}) = H^1( \widehat{\Z} \times \Z_2 \times \Z/2\Z, \F_4 \oplus \F_4)
$$
while the inflation map gives an injection
$$
H^1( \widehat{\Z} \times \Z_2, \F_4\oplus \F_4)  \to H^1( \widehat{\Z} \times \Z_2 \times \Z/2\Z, \F_4 \oplus \F_4). 
$$
Here, we regard $\widehat{\Z} \times \Z_2$ as a quotient of $\widehat{\Z} \times \Z_2 \times \Z/2\Z$.
The space $H^1( \widehat{\Z} \times \Z_2, \F_4 \oplus \F_4)$ is generated by
$$ f_2^{(1)}:
\sigma \mapsto 
\begin{pmatrix}
1 & 0 \\ 
0 & 0
\end{pmatrix},
\quad 
\tau \mapsto
\begin{pmatrix}
0 & 0 \\ 
0 & 0
\end{pmatrix},
\quad 
f_2^{(2)}:
\sigma \mapsto 
\begin{pmatrix}
0 & 0 \\ 
0 & 0
\end{pmatrix},
\quad 
\tau \mapsto
\begin{pmatrix}
1 & 0 \\ 
0 & 0
\end{pmatrix}, 
$$
and 
$$ 
f_2^{(3)}:
\sigma \mapsto 
\begin{pmatrix}
0 & 0 \\ 
0 & 1
\end{pmatrix},
\quad 
\tau \mapsto
\begin{pmatrix}
0 & 0 \\ 
0 & 0
\end{pmatrix}, 
\quad
f_2^{(4)}:
\sigma \mapsto 
\begin{pmatrix}
0 & 0 \\ 
0 & 0
\end{pmatrix},
\quad 
\tau \mapsto
\begin{pmatrix}
0 & 0 \\ 
0 & 1
\end{pmatrix}.
$$

\begin{lemma}
    If $\rho^{(k)}\in \mathcal{C}_2$ and 
    $
    f \in \operatorname{Hom}( \widehat{\Z} \times \Z_2, 
    \begin{pmatrix}
        * & \\
        & *
    \end{pmatrix}
    )
    $
    then     
    $$
(1+2^{k-1}f)\rho^{(k)} \in \mathcal{C}_2.
    $$
\end{lemma}
\begin{proof}
There are additive characters $\beta_1,\beta_2 \in \operatorname{Hom}(\widehat{\Z}\times \Z_2, \F_4)$ such that
$$
f=
\begin{pmatrix}
  \beta_1 & \\
  &  \beta_2
\end{pmatrix}.
$$
For each $i =1, 2$,
$$
\gamma_i = (1+2^{k-1})\beta_i : \widehat{\Z}\times \Z_2 \to (\W/2^{k}\W)^\times 
$$
is a multiplicative character in $\operatorname{Hom}(\widehat{\Z}\times \Z_2, (\W/2^{k}\W)^\times )$
Evidently, there exist characters 
$$
\widetilde{\gamma}_1, \widetilde{\gamma}_2: \widehat{\Z}\times \Z_2 \to \W^\times  
$$
such that $$\widetilde{\gamma}_i \equiv \gamma_i \mod 2^k.$$
Computing modulo $2^k$,
\begin{IEEEeqnarray*}{rCl}
(I+2^{k-1}f)\rho^{(k)}
& =&
\begin{pmatrix}
    \widetilde{\psi}\phi_1(1+2^{k-1}\beta_1) & * \\
     & \phi_2 (1+2^{k-1}\beta_2)
\end{pmatrix} \\
& = &
\begin{pmatrix}
    \widetilde{\psi}\phi_1 \gamma_1 & * \\
     & \widetilde{\psi}^{-1} \phi_2 \gamma_2
\end{pmatrix} \\
& = &
\begin{pmatrix}
    \widetilde{\psi}\phi_1 \widetilde{\gamma}_1 & * \\
     &\widetilde{\psi}^{-1} \phi_2 \widetilde{\gamma}_2
\end{pmatrix} \in \mathcal{C}_2.  
\end{IEEEeqnarray*}
\end{proof}

\begin{corollary} \label{N_2}
    The 5-dimensional subspace
    $$
    \mathcal{N}_2 = 
    H^1(G_2, 
    \begin{pmatrix}
        & * \\
     & \end{pmatrix})
    \oplus 
  H^1( \widehat{\Z} \times \Z_2, 
    \begin{pmatrix}
        * & \\
        & *
    \end{pmatrix}
    )
    $$
    of $H^1(G_2, Ad(\overline{\rho}))$ preserves $\mathcal{C}_2.$
\end{corollary}

Next, we consider the infinite place. We will always require our representations be trivial when restricted to $G_\infty$. This forces us to take $\mathcal{N}_\infty =0$. Note that
$\overline{\rho}|_{G_\infty}$ is trivial because $K$ is totally real. Hence $$Ad(\overline{\rho}|_{G_\infty}) = \F_4^4,$$ 
so $\dim H^0(G_\infty, Ad(\overline{\rho}))=4$. 

\section{Thickening the image along the diagonal}
Let $l$ be a prime that remains prime in $K$ and which is congruent to 1 mod 8. The smallest such prime is $l=17$.
Let $\chi_l: G_\Q \to \F_2$ denote the quadratic character corresponding to $\Q(\sqrt{l})$.
Consider the following lift of $\overline{\rho}$ to  $\operatorname{GL}(2,\W/4\W)$:
$$
\xi_2 =
\begin{pmatrix}
\widetilde{\psi}(1+2 \chi_l)& \\
& \widetilde{\psi}^{-1}
\end{pmatrix}.
$$
We compute $\xi_2|_{G_l}$. 
Since $l$ is totally ramified in $\Q(\sqrt{l})$,
it holds that 
$$\chi_l(\sigma_l) = 0, \quad 
\chi_l(\tau_l) = 1.$$
Hence,
$$\xi_2|_{G_l}:
\sigma_l \mapsto 
\begin{pmatrix}
 \widetilde{\alpha} & \\
 & \widetilde{\alpha}^{-1}
\end{pmatrix},
\quad
\tau_l \mapsto
\begin{pmatrix}
(1+2) & \\
& 1
\end{pmatrix}
=
\begin{pmatrix}
-1 & \\
& 1
\end{pmatrix}.
$$
\begin{lemma}
$\dim H^0(G_l, Ad(\overline{\rho})) =2.$ 
\end{lemma}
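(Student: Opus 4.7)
The plan is to compute $H^0(G_l, Ad(\overline{\rho}))$ directly by decomposing $Ad(\overline{\rho}|_{G_l})$ into $G_l$-stable characters, in complete analogy with the decomposition carried out for $Ad(\overline{\rho}|_{G_7})$ and $Ad(\overline{\rho}|_{G_2})$ earlier in the paper.

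First I would observe that $\overline{\rho}|_{G_l}$ is unramified: indeed, $K$ is ramified only at $7$, and $l \neq 7$, so $\overline{\rho}|_{G_l}$ factors through the Frobenius $\sigma_l$. Because $l$ remains prime in $K$, the image of $\sigma_l$ in $\Gal(K/\Q) \cong \Z/3$ is a generator, so $\psi(\sigma_l) \in \{\alpha, \alpha^{-1}\}$. Hence $\overline{\rho}(\sigma_l)$ is conjugate to $\mathrm{diag}(\alpha,\alpha^{-1})$, consistent with the reduction mod $2$ of the formula for $\xi_2|_{G_l}(\sigma_l)$ given above.

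Next, conjugation by a diagonal matrix preserves the decomposition of $M_2(\F_4)$ into its four entry-lines, so
$$
Ad(\overline{\rho}|_{G_l}) \;\simeq\; \F_4(\psi^{2}) \,\oplus\, \F_4 \,\oplus\, \F_4 \,\oplus\, \F_4(\psi^{-2}),
$$
where the two trivial summands are the diagonal entries and the two outer summands are the off-diagonal entries on which $\sigma_l$ acts by multiplication by $\alpha^{\pm 2}$. Since $\psi$ has order $3$, the characters $\psi^{\pm 2}$ coincide with $\psi^{\mp 1}$, in harmony with the decomposition of $Ad(\overline{\rho}|_{G_7})$ recorded earlier.

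Finally, because $\alpha^{\pm 2} \neq 1$ in $\F_4^\times$, the character $\psi^{\pm 2}$ is nontrivial on $\sigma_l$, so $H^0(G_l, \F_4(\psi^{\pm 2})) = 0$, while each trivial summand contributes a $1$-dimensional space of invariants. Summing over the four summands yields $\dim H^0(G_l, Ad(\overline{\rho})) = 2$, as claimed. There is no real obstacle here beyond the bookkeeping of verifying that $\psi(\sigma_l)$ is a primitive cube root of unity, which is exactly the content of the inertness assumption on $l$.
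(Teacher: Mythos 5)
Your proposal is correct and follows essentially the same route as the paper: the paper's proof simply records that since $l$ is inert in $K$, $\overline{\rho}|_{G_l}$ is unramified with $\sigma_l \mapsto \mathrm{diag}(\alpha,\alpha^{-1})$, and concludes; you merely spell out the implicit character decomposition $Ad(\overline{\rho}|_{G_l}) \simeq \F_4(\psi^{-1}) \oplus \F_4 \oplus \F_4 \oplus \F_4(\psi)$ (noting $\psi^{\pm 2} = \psi^{\mp 1}$) and the vanishing of invariants on the off-diagonal summands, exactly as the paper does at $7$ and $2$.
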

\begin{proof}
Since $l$ remains prime in $K$,
$$
\overline{\rho}|_{G_l}: \sigma_l \mapsto 
\begin{pmatrix}
 \alpha & \\
 & \alpha^{-1}
\end{pmatrix}, 
\quad \tau_l \mapsto I,
$$
The statement follows.
\end{proof}
Consider the following class $\mathcal{C}_l$ of lifts of $\overline{\rho}|_{G_l}$ to $\operatorname{GL}(2,\W/2^k\W)$ for some $k \geq 1$ for the form
$$ \pi_l:
 \sigma_l \mapsto 
\begin{pmatrix}
  \widetilde{\alpha}(1+2x) & \\
 &  \widetilde{\alpha}^{-1}(1+2y)
\end{pmatrix}, 
\quad \tau_l \mapsto 
\begin{pmatrix}
-1 & \\
& 1
\end{pmatrix}.
$$
Since $\pi_l(\tau_l)$ has order dividing 2  and $l$ is odd,
$\pi_l(\tau_l)^l = \pi_l(\tau_l)$, and since the images of $\sigma_l$ and $\tau_l$ under $\pi_l$ commute, we see that $\pi_l$ respects the relation $\sigma_l \tau_l \sigma_l^{-1}=\tau_l^l$. Evidently, $\pi_l$ is always liftable. 
Let $\mathcal{N}_l$ be the subspace of $H^1(G_l,Ad(\overline{\rho}))$
that
preserves $\mathcal{C}_l$.
\begin{lemma} \label{N_l}
$$
\mathcal{N}_l = H^1_{\operatorname{unr}}(G_l, Ad(\overline{\rho})),
$$
and 
$$
\dim \mathcal{N}_l = \dim H^0(G_l, Ad(\overline{\rho})) = 2.
$$
\end{lemma}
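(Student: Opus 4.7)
The plan is to follow the template of Lemma \ref{N_7}. First I would observe that the family $\mathcal{C}_l$ is parametrized freely by $(x,y) \in (\W/2^n\W)^2$, and every such $\pi_l$ respects the tame relation $\sigma_l \tau_l \sigma_l^{-1} = \tau_l^l$ (as already checked in the preamble). Consequently $\mathcal{C}_l$ corresponds to a smooth quotient of the local deformation ring at $l$ that is a power series ring in two variables over $\W$, so $\dim \mathcal{N}_l = 2$.

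Next I would produce two explicit unramified cocycles that span $\mathcal{N}_l$, obtained by infinitesimally perturbing the parameters $x$ and $y$:
\[
f_l^{(1)}: \sigma_l \mapsto \begin{pmatrix} 1 & 0 \\ 0 & 0 \end{pmatrix},\ \tau_l \mapsto 0; \qquad f_l^{(2)}: \sigma_l \mapsto \begin{pmatrix} 0 & 0 \\ 0 & 1 \end{pmatrix},\ \tau_l \mapsto 0.
\]
These are well-defined cocycles: they vanish on inertia, and since $\overline{\rho}(\tau_l) = I$ inertia acts trivially on $Ad(\overline{\rho})$, so the tame relation is trivially satisfied by their associated lifts to the dual numbers. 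Each one preserves $\mathcal{C}_l$: a direct computation shows that the twist $(I + 2 f_l^{(1)})\pi_l$ simply replaces $x$ by $x + 1$ and leaves the $\tau_l$-image unchanged, and analogously for $f_l^{(2)}$ and $y$. A direct-summand argument identical to that of Lemma \ref{N_7independent} then shows their classes are linearly independent in $H^1(G_l, Ad(\overline{\rho}))$, so they span the 2-dimensional space $\mathcal{N}_l$.

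To conclude, I would invoke the standard identity $\dim H^1_{\mathrm{unr}}(G_l, Ad(\overline{\rho})) = \dim H^0(G_l, Ad(\overline{\rho})) = 2$, which holds because $\overline{\rho}$ is unramified at $l$ and $l$ is coprime to $2$ (so $H^1_{\mathrm{unr}}(G_l, Ad(\overline{\rho})) \simeq Ad(\overline{\rho})/(\sigma_l - 1)Ad(\overline{\rho})$ has the same size as $Ad(\overline{\rho})^{\sigma_l}$). Since $\mathcal{N}_l$ is spanned by the unramified cocycles $f_l^{(1)}$ and $f_l^{(2)}$, we have $\mathcal{N}_l \subseteq H^1_{\mathrm{unr}}(G_l, Ad(\overline{\rho}))$, and equality follows from the matching dimensions. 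There is no substantive obstacle here; the only point worth checking carefully is that $\mathcal{C}_l$ genuinely corresponds to a smooth 2-dimensional quotient of the local deformation ring, which is immediate from the free $(x,y)$-parametrization and the always-liftability of $\pi_l$ already observed in the preamble.
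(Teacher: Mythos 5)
Your proof is correct and follows essentially the same route as the paper: identify the smooth two-variable quotient of $R_l$ giving $\dim \mathcal{N}_l = 2$, exhibit the same two unramified classes $f_l^{(1)}, f_l^{(2)}$ preserving $\mathcal{C}_l$, and conclude $\mathcal{N}_l = H^1_{\text{unr}}(G_l, Ad(\overline{\rho}))$ by the dimension count $\dim H^1_{\text{unr}} = \dim H^0 = 2$. The extra details you supply (the explicit twist computation and the independence argument via the direct-sum decomposition) are consistent with the paper's argument, which simply cites the analogous Lemma \ref{N_7}.
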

\begin{proof} The proof is almost identical to the proof of Lemma \ref{N_7}.
By Lemma \ref{smoothquotients}, there is a smooth quotient of the local deformation ring $R_l$ that is a power series ring in 2 variables, and $\mathcal{C}_l$ consists of those deformations to $\W/2^k \W$ that factor through this smooth quotient. It follows that 
$$
\dim \mathcal{N}_l = 2. 
$$
The two unramified cohomology classes
$$
f_l^{(1)}: 
\sigma_l \mapsto 
\begin{pmatrix}
1 & 0 \\
0 & 0
\end{pmatrix}
, \quad \tau_l \mapsto 
\begin{pmatrix}
0 & 0 \\
0 & 0
\end{pmatrix},
\quad 
f_l^{(2)}: 
\sigma_l \mapsto 
\begin{pmatrix}
0 & 0 \\
0 & 1
\end{pmatrix}
, \quad \tau_l \mapsto 
\begin{pmatrix}
0 & 0 \\
0 & 0
\end{pmatrix}
$$
preserve $\mathcal{C}_l$. Therefore
$\mathcal{N}_l = \operatorname{span}_{\F_4} \{ f_l^{(1)}, f_l^{(2)} \} $.
Since $\mathcal{N}_l \subseteq H^1_{\operatorname{unr}}(G_l, Ad(\overline{\rho}))$, 
and 
$$
\dim H^1_{\text{unr}}(G_l, Ad(\overline{\rho})) =2,$$ 
the lemma follows.
\end{proof}
By local duality, $\dim H^2(G_l,Ad(\overline{\rho})) =2 $ and by the local Euler-Poincar\'{e} characteristic, $\dim H^1(G_l,Ad(\overline{\rho})) = 4$. 
\begin{lemma} \label{H^1(G_l)}
It holds that 
    $$
H^1(G_l,Ad(\overline{\rho}))=H^1(G_l,
\begin{pmatrix}
 *&\\
 &*
\end{pmatrix}
)
$$
\end{lemma}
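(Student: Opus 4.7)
The plan is to follow the proof of Lemma \ref{H^1(G_7)} essentially verbatim, since the local situation at $l$ is entirely analogous to that at $7$: both are odd primes distinct from $2$ at which $\overline{\rho}$ has diagonal image with $\psi$ nontrivial on Frobenius. First, I would record the $G_l$-stable direct-sum decomposition
$$
Ad(\overline{\rho}|_{G_l}) = \begin{pmatrix} * & \\ & \end{pmatrix} \oplus \begin{pmatrix} & \\ & * \end{pmatrix} \oplus \begin{pmatrix} & * \\ & \end{pmatrix} \oplus \begin{pmatrix} & \\ * & \end{pmatrix},
$$
and note that, as $G_l$-modules, the two off-diagonal summands are isomorphic to $\F_4(\psi^{-1})$ and $\F_4(\psi)$ respectively, by the same conjugation computation already used to decompose $Ad(\overline{\rho}|_{G_7})$. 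Since cohomology commutes with finite direct sums, the lemma reduces to the vanishings $H^1(G_l,\F_4(\psi)) = H^1(G_l,\F_4(\psi^{-1})) = 0$.

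For these, I would use that $l$ is inert in $K$, so $\psi(\sigma_l) = \alpha$ is a primitive cube root of unity; in particular $H^0(G_l,\F_4(\psi^{\pm 1})) = 0$. Combining Lemma \ref{character} (which identifies $\F_4(\psi)^* \simeq \F_4(\psi^{-1})$) with local Tate duality then yields $H^2(G_l,\F_4(\psi^{\pm 1})) = 0$ as well. Finally, since $l$ is odd and both modules have $2$-power order, the local Euler--Poincar\'e characteristic reads $\#H^1(G_l,\F_4(\psi^{\pm 1})) = \#H^0 \cdot \#H^2 = 1$, so both $H^1$ groups vanish. Plugging back into the decomposition produces the claimed equality $H^1(G_l,Ad(\overline{\rho})) = H^1\bigl(G_l, \begin{pmatrix} * & \\ & * \end{pmatrix}\bigr)$. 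The argument is essentially routine and presents no serious obstacle; the only input specific to $l$ (as opposed to an arbitrary rational prime) is its being inert in $K$, which guarantees that $\psi|_{G_l}$ is nontrivial on Frobenius and thereby kills the off-diagonal cohomology.
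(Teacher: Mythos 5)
Your proposal is correct and follows the same route as the paper: the paper's proof likewise reduces to showing $H^1\bigl(G_l, \begin{pmatrix} & * \\ * & \end{pmatrix}\bigr)=0$ via the local Euler--Poincar\'e characteristic (together with the vanishing of $H^0$ and, by duality, $H^2$, using that $\psi$ is nontrivial on Frobenius at the inert prime $l$), exactly as you spell out. Your write-up simply makes explicit the details the paper leaves as ``easy to check.''
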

\begin{proof}
Using the local Euler-Poincar\'{e} characteristic, it is easy to check that 
$$
H^1(G_l,
\begin{pmatrix}
 & *\\
 *&
\end{pmatrix}
)
=0.
$$
\end{proof}

Next, we will verify that $\xi_2|_{G_q}$ belongs to $\mathcal{C}_q$ for each $q\in \{ \infty, 2, 7, l \}$. Since $\Q(\sqrt{l})$ is totally real, $\xi_2|_{G_\infty} \in \mathcal{C}_\infty$.  It is clear that $\xi_2|_{G_l} \in \mathcal{C}_l$. 
Next, we consider 7. 
Recall that  $K=\Q(\widetilde{\psi})$ is totally ramified at 7 so that
$$
\widetilde{\psi}(\sigma_7)=1, \quad 
\widetilde{\psi}(\tau_7)=\widetilde{\alpha}.
$$
Since 7 is unramified in $\Q(\sqrt{l})$, 
$$ \chi_l(\tau_7)=0.$$
Therefore,
$$ \xi_2|_{G_7}:
\sigma_7 \mapsto 
\begin{pmatrix}
1+2 \chi_l(\sigma_7) & 0 \\
0 & 1
\end{pmatrix}
, \quad \tau_7 \mapsto 
\begin{pmatrix}
 \widetilde{\alpha} & 0 \\
0 &  \widetilde{\alpha}^{-1}
\end{pmatrix},
$$
from which we conclude that 
$\xi_2|_{G_7} \in \mathcal{C}_7$. Finally, we consider 2. 
The fact that $2$ remains prime in $K$ implies that 
$
\widetilde{\psi}|_{G_2} 
$
has order 3. 
The fact that $l \equiv 1 \mod 8$ implies that 2 splits completely in $\Q(\sqrt{l})$, and therefore
$$
\chi_l|_{G_2} = 0.
$$
We conclude that
$$ \xi_2|_{G_2} =
\begin{pmatrix}
\widetilde{\psi} & \\
& \widetilde{\psi}^{-1}
\end{pmatrix} \in \mathcal{C}_2.
$$
\section{Thickening the image off the diagonal and irreducibility} \label{irred}
In \cite{HR}, a type of auxiliary primes called \emph{trivial primes} is defined. Furthermore, trivial primes are used in the relative deformation theory developed in \cite{FKP1} and \cite{FKP}. Below, we adapt the notion of trivial primes to the setting of this paper, where the residual characteristic is $p=2$ and the field $\Q(\overline{\rho})$ cut out by $\overline{\rho}$ is the totally real field $K$ of degree 3 over $\Q$.

\begin{definition}
We will say that a prime $p$ is trivial if
\begin{enumerate} \label{deftrivialprimes}
    \item $p \equiv 3 \mod 4$,
    \item $p$ splits completely in $K.$
\end{enumerate} 
\end{definition}
Next, we introduce the \emph{governing field} of $K$. Governing fields can be used to study $\Z/p\Z$-extensions of $K$ with prescribed tame ramification. Here, we will work with the case $p=2$.
For more details on governing fields, see \cite{gras}.

The governing field of $K$ is the extension $K(\sqrt{V_\emptyset})$, where
$$
V_\emptyset = \{ x \in K^\times | (x) = J^2 \text{ for some fractional ideal } J \}.
$$
There is an exact sequence
$$
1 \to U_K / U_K^{ 2} \to V_\emptyset/K^{\times 2} \to \cl_K[2] \to 1.
$$
Since $\cl_K$ is trivial, the governing field is $K(\sqrt{U_K})$. The unit group of $K$ has rank 2, and $$
U_K / U_K^{ 2} \simeq (\Z/2)^3.
$$
We conclude that    
$$
\Gal(K(\sqrt{U_K})/K) \simeq \F_2 \oplus U_2 $$
as $\Gal(K/\Q)$-modules, where $U_2$ is the irreducible 2-dimensional representation of $\Z/3$ over $\F_2$, and $\F_2$ is the trivial 1-dimensional representation. This follows from the classification of linear representations of $\Z/3\Z$ over the field $\F_2$ (see \cite{rep}).

By Chebotarev's theorem, there are infinitely many \emph{degree one} primes in $K$ whose Frobenius is equal to a given element in the Galois group of an abelian extension of $K$. Therefore, we may choose a prime $p$ such that 
\begin{enumerate}
    \item $p$ splits completely in $K$,
    \item Writing 
    $$p O_K=\mathfrak{p}_1 \mathfrak{p}_2 \mathfrak{p}_3,$$ 
    it holds that 
    $$\sigma_{\p_1} = (1,0)\in \F_2 \oplus U_2.$$
\end{enumerate}
Now, $\sigma_{\p_2}$ and $\sigma_{\p_3}$ are in the $\Gal(K/\Q)$-orbit of $\sigma_{\p_1}$, so the span of these three Frobenius automorphisms is just $\F_2$.
Let $L$ denote the subfield of the governing field fixed by $U_2$. Evidently, $L$ descends to $\Q$; indeed it is easy to see that $L$ is just the base change of $\Q(\sqrt{-1})$ to $K$. In particular, the fact that the Frobenius automorphisms above $p$ are nontrivial in $L$ forces $
p \equiv 3 \mod 4
$. Hence, $p$ is a trivial prime according to Definition \ref{deftrivialprimes}.
For any set of places $S$ in $K$, let 
$K_{S}$ be the maximal 2-extension of $K$ unramified outside $S$. If $S$ is stable under $\Gal(K/\Q),$ then $K_S$ is Galois over $\Q.$
Let $S_p=\{\mathfrak{p}_1,\mathfrak{p}_2,\mathfrak{p}_3\}$. Since we are in a tame setting, it follows
from \cite{gras} (equation (2.2), p. 3) that 
$$
\dim \frac{H^1(G_{S_p }, \Z /2)}{H^1(G_\emptyset, \Z /2)}
= \#S_p - \dim (\text{span} \{ \sigma_v |v\in S_p \}) =3-1= 2. 
$$
Since the class number of $K$ is prime to 2, $H^1(G_\emptyset, \Z /2)$ is trivial.
It follows that the maximal 2-elementary abelian extension of $K$ ramified at the primes above $p$, denoted $K^{(p)}$, has 
$$M:=\Gal(K^{(p)}/K) = \F_2^2.$$ Note that $K^{(p)}$ is Galois over $\Q$, and since the order of $M$ is prime to 3, $$\Gal(K^{(p)}/\Q) = M \rtimes \Gal(K/\Q).$$
We claim that $M$ is isomorphic to $U_2$ as a $\Gal(K/\Q)$-module. If not, then $M$ would be a sum of two trivial 1-dimensional representations over $\F_2$, and hence $K^{(p)}$ would descend to $\Q$, but there are no biquadratic extensions of $\Q$ ramified only at one prime $p$. 

We regard $\Gal(K/\Q)$ as a subgroup of $\operatorname{GL}(2,\F_2)$ with generator
$$
g=\begin{pmatrix}
0 & 1 \\
1 & 1
\end{pmatrix}.
$$
The characteristic polynomial of $g$ is $x^2+x+1$, so $g$ has eigenvalues $\alpha$ and $\alpha^{-1}$, where $\alpha$ is a primitive third root of unity in $\F_4$. The eigenvector for $\alpha$ is  
$$v_1= \begin{pmatrix}
\alpha^{-1} \\
1
\end{pmatrix},$$
and the eigenvector for $\alpha^{-1}$ is  
$$v_2= \begin{pmatrix}
\alpha^{} \\
1
\end{pmatrix}.$$
Let $e_1 =  \begin{pmatrix}
1 \\
0
\end{pmatrix} \in M$ and $e_2 =  \begin{pmatrix}
0 \\
1
\end{pmatrix} \in M.$
Let $$\iota: M \to \F_4 v_1 \oplus \F_4 v_2 $$ be the inclusion of $M$ as a subset. Then 
$$
\iota: e_1 \mapsto \begin{pmatrix}
1 \\
0
\end{pmatrix}
=
\begin{pmatrix}
\alpha^{-1} \\
1
\end{pmatrix}
+
\begin{pmatrix}
\alpha^{} \\
1
\end{pmatrix}, \quad 
\iota: e_2 \mapsto \begin{pmatrix}
0 \\
1
\end{pmatrix}
=
\alpha \begin{pmatrix}
\alpha^{-1} \\
1
\end{pmatrix}
+ \alpha^{-1}
\begin{pmatrix}
\alpha^{} \\
1
\end{pmatrix}.
$$
It is trivial to see that $\iota$ is $\F_2$-linear and $\Gal(K/\Q)$-equivariant. 
Let 
$$\pi_1: \F_4 v_1 \oplus \F_4 v_2 \to \F_4$$ 
by the projection of a vector in $\F_4 v_1 \oplus \F_4 v_2$ onto the first coordinate. Then $\pi_1$ is a $\Gal(K/\Q)$-morphism to $\F_4 (\psi)$:
$$
\pi_1(g\cdot(a v_1 + bv_2)) = \pi_1(\alpha a v_1 + \alpha^{-1} b v_2) = \alpha a = \psi(g) \pi_1(a v_1 + bv_2).
$$
Similarly, $\pi_2$ is a $\Gal(K/\Q)$-morphism to $\F_4(\psi^{-1})$.
Let $\varphi_1=\pi_1 \circ \iota $. This defines an $\F_2$-linear, $\Gal(K/\Q)$-equivariant map $M\to \F_4(\psi)$. Since 
$$
\varphi_1(e_1) = \pi_1 (1\cdot v_1 + v_2) = 1,
\quad 
\varphi_1(e_2) = \pi_1 ( \alpha v_1 + \alpha^{-1} v_2) = \alpha,
$$
we see that $\varphi_1$ is surjective, and hence an isomorphism.

By similar reasoning, there is a $\Gal(K/\Q)$-isomorphism $\varphi_2 =\pi_2 \circ \iota :M \to \F_4(\psi^{-1})$. In Lemma \ref{v,w} below, we summarize our findings of this section so far.

\begin{lemma}\label{v,w} In the above notation, for any trivial prime $p$
such that the
Frobenius automorphisms at primes above $p$ 
equals $(1,0)$ in $\F_2 \oplus U_2,$
one can find an isomorphism of
$\Gal(K^{(p)}/K)$ with $ \F_4(\psi)$
and one with $ \F_4(\psi^{-1})$ as $\Gal(K/\Q)$-modules.
Once and for all, we choose a trivial prime $v$ such that 
$$\Gal(K^{(v)}/K) = \F_4(\psi^{-1})$$ 
as $\Gal(K/\Q)$-modules, and such that $v$ remains prime in $\Q(\sqrt{l})$. Then, we choose another trivial prime $w$ for which there  is a field $K^{(w)}$ such that 
$$\Gal(K^{(w)}/K) = \F_4(\psi),$$
such that $w$ also remains prime in $\Q(\sqrt{l})$, and such that
 $w$ splits completely in $K^{(v)}$.
\end{lemma}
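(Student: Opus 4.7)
The first assertion is immediate from the construction preceding the lemma: for a trivial prime $p$ satisfying $\sigma_{\mathfrak{p}_1} = (1,0) \in \F_2 \oplus U_2$, the maps $\varphi_1 = \pi_1 \circ \iota$ and $\varphi_2 = \pi_2 \circ \iota$ are surjective $G$-morphisms from the $2$-dimensional $\F_2$-module $M = \Gal(K^{(p)}/K)$ to the $2$-dimensional $\F_2$-modules $\F_4(\psi)$ and $\F_4(\psi^{-1})$, and hence $G$-isomorphisms by counting dimensions. So the only content left is to produce the specific primes $v$ and $w$, for which the plan is to apply Chebotarev to increasingly large compositums.

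For the existence of $v$, consider the Galois extension $E_1 = K(\sqrt{U_K}) \cdot \Q(\sqrt{l})$ of $\Q$. The governing field $K(\sqrt{U_K})/\Q$ is ramified only at primes above $2$ and $7$, while $\Q(\sqrt{l})/\Q$ is ramified only at $l$, so the two fields are linearly disjoint over $\Q$. Impose the Frobenius conditions at $v$ that (a) $v$ is trivial in $\Gal(K/\Q)$, (b) the Frobenius in $\Gal(K(\sqrt{U_K})/K)$ equals $(1,0)\in\F_2\oplus U_2$, and (c) the Frobenius is nontrivial in $\Gal(\Q(\sqrt{l})/\Q)$. Condition (b) forces $v \equiv 3 \bmod 4$ via the identification of the $\F_2$-summand with $K\cdot\Q(\sqrt{-1})/K$ given in the text, so $v$ is trivial in our sense, while (c) says $v$ is inert in $\Q(\sqrt{l})$. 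Chebotarev's theorem then produces infinitely many such $v$; fix one.

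For $w$, enlarge the compositum to $E_2 = E_1 \cdot K^{(v)}$. Since $K^{(v)}/K$ is ramified precisely at the primes of $K$ above $v$, while $E_1/K$ is unramified above $v$ (because $v$ is coprime to $2$, $7$, and $l$), the fields $K^{(v)}$ and $E_1$ are linearly disjoint over $K$, so $\Gal(E_2/K)$ splits as the direct product of $\Gal(E_1/K)$ and $\Gal(K^{(v)}/K)$. Impose at $w$ the same conditions (a)--(c) as for $v$, together with (d) the Frobenius is trivial on $\Gal(K^{(v)}/K)$, equivalently $w$ splits completely in $K^{(v)}$. By linear disjointness these conditions decouple, and Chebotarev yields infinitely many $w$; fix one.

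The main technical point, and essentially the only obstacle, is checking the linear disjointness statements used above, which boil down to comparison of ramification loci between the governing field, $\Q(\sqrt{l})$, and $K^{(v)}$. Once these are in place, the conditions assemble into a single nonempty conjugacy class condition in $\Gal(E_2/\Q)$ and Chebotarev delivers the desired primes.
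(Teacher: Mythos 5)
Your proof is correct and takes essentially the same approach as the paper: the first assertion is exactly the explicit $G$-isomorphisms $\varphi_1,\varphi_2$ constructed just before the lemma, and the existence of $v$ and then $w$ is the paper's own (later-stated) observation that the listed properties are compatible Chebotarev conditions in the compositum of $K(\sqrt{U_K})$, $\Q(\sqrt{l})$, and $K^{(v)}$, with compatibility coming from linear disjointness checked by comparing ramification. Your insertion of the hypothesis $\sigma_{\mathfrak{p}_1}=(1,0)$ is the right reading of the lemma's loosely worded ``any trivial prime'' (without it $K^{(p)}$ could be trivial), and it matches the condition the paper itself imposes both in the construction preceding the lemma and in its explicit choice of $v=167$, $w=379$.
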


If we allow our residual representation to have coefficients in $\F_4$, then we can express $g$ as a matrix with respect to the basis $v_1,v_2$ defined above. It takes the form
$$
\begin{pmatrix}
\alpha & 0 \\
0 & \alpha^{-1}
\end{pmatrix}.
$$
Then we may identify $\F_4(\psi)$ with the subspace 
$$
\begin{pmatrix}
0 & 0 \\
* & 0
\end{pmatrix}
$$
of the adjoint with coefficients in $\F_4$, and we may identify $\F_4(\psi^{-1})$ with the subspace 
$$
\begin{pmatrix}
0 & * \\
0 & 0
\end{pmatrix}.
$$
If $f$ is a 1-cocycle, we let $f|_K$ denote the restriction of $f$ to $\ker\overline{\rho}$. Note that $f|_K$ is a homomorphism.

\begin{proposition} \label{globalclasses}
The number fields $K^{(v)}$ and $K^{(w)}$ give rise to cohomology classes
$f^{(v)} \in H^1(G_S, 
\begin{pmatrix}
& * \\
&
\end{pmatrix}
)$ and $f^{(w)} \in H^1(G_S, 
\begin{pmatrix}
&  \\
* &
\end{pmatrix}
)$.
\end{proposition}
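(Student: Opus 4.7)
The plan is to obtain $f^{(v)}$ and $f^{(w)}$ by inflation from the finite quotients $\Gal(K^{(v)}/\Q)$ and $\Gal(K^{(w)}/\Q)$, exploiting the $G$-module structure pinned down in Lemma \ref{v,w}. The construction is parallel for $v$ and $w$, so I sketch only $f^{(v)}$.

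Set $M_v = \Gal(K^{(v)}/K)$. Since $v$ splits completely in $K$, the set of primes of $K$ above $v$ is stable under $G=\Gal(K/\Q)$, so $K^{(v)}/\Q$ is Galois, and since $|M_v|$ is a power of $2$ while $|G|=3$, we have $\Gal(K^{(v)}/\Q)\cong M_v \rtimes G$. From the definition of $K^{(v)}$ together with the fact that $K/\Q$ ramifies only at $7$, the extension $K^{(v)}/\Q$ is unramified outside $\{7,v,\infty\}\subseteq S$, giving a surjection $G_S \twoheadrightarrow \Gal(K^{(v)}/\Q)$.

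Next I run inflation-restriction for the tower $\Q\subseteq K \subseteq K^{(v)}$ with coefficients in $\F_4(\psi^{-1})$:
$$0 \to H^1(G,\F_4(\psi^{-1})) \to H^1(\Gal(K^{(v)}/\Q),\F_4(\psi^{-1})) \to H^1(M_v,\F_4(\psi^{-1}))^G \to H^2(G,\F_4(\psi^{-1})).$$
Since $|G|=3$ is a unit in the coefficients, the outer terms vanish and restriction identifies the middle group with $\text{Hom}_G(M_v,\F_4(\psi^{-1}))$. The $G$-isomorphism $M_v \xrightarrow{\sim}\F_4(\psi^{-1})$ supplied by Lemma \ref{v,w} is a nonzero element of this Hom, so it pulls back to a class in $H^1(\Gal(K^{(v)}/\Q),\F_4(\psi^{-1}))$. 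Inflating along $G_S \twoheadrightarrow \Gal(K^{(v)}/\Q)$ produces $f^{(v)}\in H^1(G_S,\F_4(\psi^{-1}))$; under the identification of $\F_4(\psi^{-1})$ with the upper-right off-diagonal line $\begin{pmatrix} & * \\ & \end{pmatrix}$ in $\mathrm{Ad}(\overline{\rho})$ recorded just before the proposition, this is the desired class.

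I do not expect a serious obstacle: the two things that need checking, namely that $K^{(v)}/\Q$ is unramified outside $S$ and that the outer terms in inflation-restriction vanish, are both immediate from the setup. The construction of $f^{(w)}$ is verbatim the same, with $w$, $\F_4(\psi)$, and the lower-left off-diagonal line $\begin{pmatrix} & \\ * & \end{pmatrix}$ replacing their counterparts.
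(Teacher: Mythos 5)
Your construction is correct, and it reaches the same class as the paper by a slightly different mechanism. The paper's proof is deformation-theoretic: it realizes $\Gal(K^{(v)}/\Q) = \F_4(\psi^{-1}) \rtimes G$ as a subgroup of $GL(2,\F_4[\varepsilon]) = (I+\varepsilon Ad(\overline{\rho}))\rtimes GL(2,\F_4)$, so that the projection $G_\Q \to \Gal(K^{(v)}/\Q)$ is literally a lift of $\overline{\rho}$ to the dual numbers, and the cocycle $f^{(v)}$ is read off as the $(I+\varepsilon Ad(\overline{\rho}))$-component; this makes the identity $K^{(v)} = K_{f^{(v)}}$ and the later use of $f^{(v)}$ in the twist $(I+2(f^{(v)}+f^{(w)}))\xi_2$ completely explicit. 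You instead run Hochschild--Serre for $\Q \subseteq K \subseteq K^{(v)}$ and use that $|G|=3$ kills $H^1(G,\F_4(\psi^{-1}))$ and $H^2(G,\F_4(\psi^{-1}))$ on $2$-torsion coefficients, so restriction identifies $H^1(\Gal(K^{(v)}/\Q),\F_4(\psi^{-1}))$ with $\mathrm{Hom}_G(M_v,\F_4(\psi^{-1}))$, where the isomorphism of Lemma \ref{v,w} sits; inflating gives the class, and since its restriction to $\Gal(K^{(v)}/K)$ is that isomorphism, its kernel on $G_K$ cuts out exactly $K^{(v)}$, so nothing is lost relative to the paper's formulation. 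Your route is marginally more careful on one point the paper leaves implicit: you verify that $K^{(v)}/\Q$ is unramified outside $\{7,v,\infty\}\subseteq S$, so the class genuinely lives in $H^1(G_S,\cdot)$ rather than merely in $H^1(G_\Q,\cdot)$ as written in the paper's proof. The coprimality vanishing you invoke is the same device the paper uses elsewhere (e.g.\ in Lemma \ref{trivialsha} and in the injectivity of restriction in Lemma \ref{globalreciprocity}), so the two arguments are structurally parallel; the paper's version buys explicitness of the cocycle, yours buys a cleaner identification of where it lives.
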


\begin{proof} Note that
$$\Gal(K^{(v)}/\Q) \simeq \F_4(\psi^{-1}) \rtimes \Gal(K/\Q),$$ which again can be realized as a subgroup of $$\operatorname{GL}(2,\F_4[\varepsilon])=(I+\varepsilon Ad(\overline{\rho}))\rtimes \operatorname{GL}(2,\F_4).$$
Therefore the projection $$G_\Q \to \Gal(K^{(v)}/\Q)$$ defines a lift of $\overline{\rho}$ to the dual numbers, which corresponds to a 1-cocycle
$f^{(v)} \in H^1(G_\Q, \F_4(\psi^{-1}))$. The field $K^{(v)}$ is then equal to the field $K_{f^{(v)}}$, ie. the fixed field of $\ker (f^{(v)}|_K)$. 
Analogously, $K^{(w)}$ defines a 1-cocycle $f^{(w)}\in H^1(G_\Q, \F_4(\psi))$.
\end{proof}

\begin{lemma}
The fields $K^{(v)}$ and $K^{(w)}$ are linearly disjoint.
\end{lemma}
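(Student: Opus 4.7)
The plan is to prove $L := K^{(v)} \cap K^{(w)} = K$, which is exactly what it means for $K^{(v)}$ and $K^{(w)}$ to be linearly disjoint over $K$. My strategy combines a ramification argument with the vanishing $H^1(G_\emptyset, \Z/2) = 0$ that was already invoked in the proof of Lemma \ref{v,w}.

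First I would observe that $L/K$ is 2-elementary abelian, since it sits inside each of the 2-elementary abelian extensions $K^{(v)}/K$ and $K^{(w)}/K$. Next I would control the ramification of $L/K$. Because $L \subseteq K^{(v)}$ and $K^{(v)}/K$ is by construction unramified at every place of $K$ not lying above $v$, the extension $L/K$ is itself unramified at every such place; the identical argument with $K^{(w)}$ in place of $K^{(v)}$ shows that $L/K$ is unramified at every place of $K$ not above $w$. Since $v$ and $w$ are distinct rational primes (as arranged in Lemma \ref{v,w}), no place of $K$ lies above both, so $L/K$ is unramified at every finite place of $K$. Archimedean ramification is excluded as well, since the convention ``unramified outside $S$'' implicit in the Gras--Munnier computation of the proof of Lemma \ref{v,w} forbids ramification at the infinite places (this is the same convention under which $H^1(G_\emptyset, \Z/2)$ vanishes whenever $\cl_K$ has no 2-part).

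To conclude, I would invoke $H^1(G_\emptyset, \Z/2) = 0$, that is, the nonexistence of nontrivial 2-elementary abelian everywhere-unramified extensions of $K$, established in the proof of Lemma \ref{v,w} from the fact that the class number of $K$ is prime to $2$. This forces $L = K$, as required.

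I do not foresee a substantive obstacle: the whole argument is a direct consequence of the definitions of $K^{(v)}$ and $K^{(w)}$, the distinctness $v \neq w$, and the triviality of the 2-part of $\cl_K$. The only point needing a moment of care is to confirm that the property ``unramified outside the primes above $v$'' also rules out archimedean ramification, which is forced by the conventions under which the proof of Lemma \ref{v,w} computes $\dim H^1(G_S, \Z/2)$.
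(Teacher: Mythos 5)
Your proof is correct and follows essentially the same route as the paper: the intersection $L = K^{(v)}\cap K^{(w)}$ can only ramify at places lying above both $v$ and $w$, hence is unramified over $K$, and the triviality of the 2-part of $\cl_K$ then forces $L=K$. Your phrasing handles all possible degrees of $L/K$ at once (and notes the archimedean convention), but the key ideas are identical to the paper's argument.
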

\begin{proof}
Let $L=K^{(v)} \cap K^{(w)}$. Since the two fields are ramified at distinct primes, $L$ cannot be equal to either of $K^{(v)}$ or $K^{(w)}$. Suppose $[L:K]=2$. Since there are no unramified 2-extensions of $K$, some place of $K$ must ramify in $L$. Since $L$ is contained in $K^{(v)}$, this place must be a prime in $K$ above $v$. But since $L$ is contained in $K^{(w)}$, the ramified place must divide $w$. This contradiction shows that $L=K$.
\end{proof}
As usual, let $\psi$ be the character that sends a generator of $\Gal(K/\Q)$ to $\alpha \in \F_4$, and let $\widetilde{\psi}$ denote the Teichmuller lift of $\psi$ to $\W$. Recall that  
$$\xi_2=
\begin{pmatrix}
\widetilde{\psi}(1+2\chi_l) & 0 \\
0 & \widetilde{\psi}^{-1}
\end{pmatrix}
\mod 4
$$ 
is a lift of $\overline{\rho}$ to $\W / 4\W $.
Let
$$
\rho_2 = (I +2(f^{(v)}+f^{(w)}))\xi_2.
$$
\begin{lemma}
$$\ker \rho_2 = \ker f^{(v)}|_K \cap \ker f^{(w)}|_K\cap \ker \chi_l.$$
\end{lemma}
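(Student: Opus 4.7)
The plan is to prove the equality by a direct computation of $\rho_2$ on a group element in $\W/4\W$. The strategy has two stages: first pin down that $\ker \rho_2 \subseteq G_K$ by reduction mod 2, then on $G_K$ the cocycles become homomorphisms and the matrix computation is transparent.

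For the first step, I would observe that $\rho_2$ reduces mod 2 to $\overline{\rho}$, so any $g \in \ker \rho_2$ must satisfy $\overline{\rho}(g) = I$, i.e.\ $g \in G_K = \ker \overline{\rho}$. Thus it suffices to compute $\rho_2|_{G_K}$. Since $\widetilde{\psi}$ factors through $\Gal(K/\Q)$, it is identically $1$ on $G_K$, and since $G_K$ acts trivially on $Ad(\overline{\rho})$, the restrictions $f^{(v)}|_{G_K}$ and $f^{(w)}|_{G_K}$ are genuine homomorphisms to $\F_4(\psi^{-1})$ and $\F_4(\psi)$ respectively; $\chi_l|_{G_K}$ is of course already a homomorphism.

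For the second step, fix $g \in G_K$ and set $a = f^{(v)}(g)$, $b = f^{(w)}(g)$, $c = \chi_l(g)$. Using the identification of $\F_4(\psi^{-1})$ with the strictly upper-triangular subspace and $\F_4(\psi)$ with the strictly lower-triangular subspace of the adjoint (from the previous section), together with the identification of $2\W/4\W$ with $\F_4$ needed to interpret $2a$ and $2b$ as entries in $\W/4\W$, we compute
\[
\xi_2(g) = \begin{pmatrix} 1+2c & 0 \\ 0 & 1 \end{pmatrix}, \qquad I + 2(f^{(v)}+f^{(w)})(g) = \begin{pmatrix} 1 & 2a \\ 2b & 1 \end{pmatrix}.
\]
Multiplying and using $4 \equiv 0 \pmod 4$ to kill the cross term $2b\cdot 2c$, we obtain
\[
\rho_2(g) \;\equiv\; \begin{pmatrix} 1+2c & 2a \\ 2b & 1 \end{pmatrix} \pmod{4}.
\]
This matrix equals the identity in $GL(2,\W/4\W)$ if and only if $a = 0$, $b = 0$ in $\F_4$ and $c = 0$ in $\F_2$, which translates exactly to $g \in \ker f^{(v)}|_K \cap \ker f^{(w)}|_K \cap \ker \chi_l$.

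The only subtlety is bookkeeping: one must be careful that $2 f^{(v)}$ and $2 f^{(w)}$ are well-defined entries in $\W/4\W$ (via the isomorphism $2\W/4\W \simeq \W/2\W = \F_4$) and that the matrix product is computed mod $4$ so that the $4bc$ term drops out. Everything else is an honest calculation, and no deeper input is required beyond the identifications of the summands of $Ad(\overline{\rho})$ already established.
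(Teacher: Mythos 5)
Your proposal is correct and follows essentially the same route as the paper: reduce mod $2$ to see $\ker\rho_2\subseteq\ker\overline{\rho}=G_K$, then evaluate $\rho_2$ on $G_K$ in $\W/4\W$ and use the fact that $f^{(v)}$, $f^{(w)}$, and $\chi_l$ land in the complementary upper-right, lower-left, and diagonal pieces of $M_2(\F_4)$, so the sum vanishes only if each term does. Your explicit matrix bookkeeping (including the vanishing cross term and the identification $2\W/4\W\simeq\F_4$) just spells out what the paper phrases as independence of the three classes.
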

\begin{proof}
Clearly
$$\ker \rho_2 \subseteq \ker \overline{\rho}
= \ker\widetilde{\psi}.$$ 
It follows that $g\in \ker\rho_2$ if and only if 
$$
I+2 \Big( f^{(v)}(g)+f^{(w)}(g)+
\begin{pmatrix}
\chi_l(g) & \\
&
\end{pmatrix}
\Big)  =I
$$
in $GL(2, \W / 4 \W)$, which is equivalent to 
$$f^{(v)}(g)+f^{(w)}(g)
+
\begin{pmatrix}
\chi_l(g) & \\
&
\end{pmatrix}
= 0$$ 
in $M_2(\F_4)$.
Since all the cohomology classes in this sum are independent, the sum equals zero if and only if  
$f^{(v)}(g)=0, f^{(w)}(g)=0,$ and $\chi_l(g)=0.$
\end{proof}
In conclusion, 
$$\Gal(K(\rho_2)/K) = \Gal(K^{(v)}K^{(w)}K(\sqrt{l})/K) \supseteq \F_4(\psi^{-1})\oplus \F_4(\psi^{}).$$

\begin{definition} \label{reducible}
    Let $R$ be a ring and let $M = R^2$ (the free module of rank 2 over $R$).
We will say that a representation $\rho: G_\Q \to \operatorname{GL}(2,R)$ is \emph{reducible} if
there exist free $R$-modules $M_1$ and $M_2$ of rank one, where $M_1$ is $G_\Q$-stable, such that
$$
M= M_1 \oplus M_2.
$$
In this case, $M_1=R x_1$ and $M_2=R x_2$ for $x_1,x_2 \in M$, where $\{x_1,x_2\}$ is an $R$-basis for $M$. With respect to this basis,
$$
\rho =
\begin{pmatrix}
\chi & * \\
0 & \chi' 
\end{pmatrix}
$$ for some characters $\chi, \chi': G_\Q \to R^\times.$
If $\{e_1, e_2\}$ is another $R$-basis for $M$, the change of basis matrix $A$ (from $\{e_1,e_2\}$ to $\{x_1,x_2\}$) is an element of
$\operatorname{GL}(2,R)$ (see \cite{Lang} p. 508 Proposition 3.1).
We say that  $\rho$ is \emph{irreducible} if it is not reducible according to the definition above.
\end{definition}

\begin{proposition} \label{irreduciblelifts}
Any lift of $\rho_2$ to $\W$ is irreducible.
\end{proposition}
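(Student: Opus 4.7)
The strategy is proof by contradiction, exploiting the fact that $f^{(v)}$ and $f^{(w)}$ occupy opposite off-diagonal positions in the adjoint, so that no single flag in $\F_4^2$ can accommodate both. Suppose some lift $\rho: G_\Q \to GL(2,\W)$ of $\rho_2$ is reducible. Then $\rho_2$ is itself reducible in the sense of Definition \ref{reducible}, so there exists $A \in GL(2,\W/4\W)$ such that $A \rho_2 A^{-1}$ is upper triangular.

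Restrict attention to $g \in G_K = \ker \overline{\rho}$. A direct computation from the definitions of $\xi_2$, $f^{(v)}$, and $f^{(w)}$ yields
\[
\rho_2(g) \;=\; I + 2 N_g, \qquad N_g := \begin{pmatrix} \chi_l(g) & v_g \\ w_g & 0 \end{pmatrix} \in M_2(\F_4),
\]
where $v_g, w_g \in \F_4$ are the nonzero entries of $f^{(v)}(g)$ and $f^{(w)}(g)$. Since $2 A N_g A^{-1}$ modulo $4$ depends only on $\overline{A} := A \bmod 2$, upper-triangularity mod $4$ of $A \rho_2(g) A^{-1}$ translates to $\overline{A} N_g \overline{A}^{-1}$ being upper triangular in $M_2(\F_4)$ for every $g \in G_K$. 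Equivalently, $N_g$ must preserve the line $L := \overline{A}^{-1}(\F_4 e_1)$ for every $g \in G_K$.

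The next step is a case analysis over the five lines $L \in \mathbb{P}^1(\F_4)$. When $L = \F_4 e_1$, the preservation condition forces $w_g = 0$ for all $g \in G_K$; when $L = \F_4 e_2$, it forces $v_g = 0$ for all $g \in G_K$; in the remaining cases $L = \F_4 \cdot (1,c)^T$ with $c \in \F_4^\times$, one extracts the linear relation
\[
w|_K \;=\; c\,\chi_l|_K \,+\, c^2\, v|_K
\]
of homomorphisms $G_K \to \F_4$. The first two conclusions directly contradict the nontriviality of $K^{(w)}/K$ and $K^{(v)}/K$ established in the construction of these fields.

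For the three remaining cases, the relation gives $\ker(\chi_l|_K) \cap \ker(v|_K) \subseteq \ker(w|_K)$, hence $K^{(w)} \subseteq K(\sqrt{l})\, K^{(v)}$. But $K(\sqrt{l})\, K^{(v)}$ is unramified above every prime of $K$ lying over $w$, so $K^{(w)}/K$ would be everywhere unramified (since by construction it is also unramified outside primes above $w$). Combined with $\cl_K = 1$ and the nontriviality of $K^{(w)}/K$, this is a contradiction. The main obstacle is precisely this last case: one must recognize the $\F_4$-linear relation on restrictions to $G_K$ as an inclusion of number fields and rule it out using the disjoint ramification of $K(\sqrt{l})$, $K^{(v)}$, and $K^{(w)}$.
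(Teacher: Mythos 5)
Your argument is correct, and it reaches the conclusion by a genuinely different route than the paper. Your opening reduction (reducibility of a $\W$-lift descends, via reducing the conjugating matrix, to reducibility of $\rho_2$ mod $4$, and only $\overline{A}=A\bmod 2$ matters because $\rho_2|_{G_K}=I+2N_g$) is the common starting point, but from there the two proofs diverge. The paper exploits the full group $G_\Q$: requiring that conjugation by $\overline{A}$ also upper-triangularizes the residual representation $\mathrm{diag}(\psi,\psi^{-1})$ forces $cd(\psi+\psi^{-1})=0$, i.e.\ $cd=0$, so the stable line can only be $\F_4e_1$ or $\F_4e_2$; each case is then killed by a Jordan--H\"older argument, since the missing corner would exclude $\F_4(\psi)$ (resp.\ $\F_4(\psi^{-1})$) from the factors of $\ker\overline{\rho}/\ker\rho_2$, contradicting $\ker\overline{\rho}/\ker\rho_2\supseteq\F_4(\psi)\oplus\F_4(\psi^{-1})$. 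You never invoke the residual constraint: you work entirely on $G_K$, enumerate all five lines of $\mathbb{P}^1(\F_4)$, recover the paper's two cases directly as the vanishing of $f^{(w)}|_K$ or $f^{(v)}|_K$, and then must dispose of the three skew lines by an extra arithmetic argument: the relation $w|_K=c\,\chi_l|_K+c^2v|_K$ gives $K^{(w)}\subseteq K(\sqrt{l})\,K^{(v)}$, impossible because $K^{(w)}/K$ is nontrivial and ramified only above $w$ while the compositum is unramified above $w$ and $\cl_K$ is trivial --- the same ramification/class-number input the paper uses to prove $K^{(v)}$ and $K^{(w)}$ are linearly disjoint. What the paper's use of $\overline{\rho}$ on all of $G_\Q$ buys is exactly the elimination of those skew lines with no additional arithmetic; what your version buys is a completely explicit matrix computation confined to $G_K$, with no Jordan--H\"older bookkeeping. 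Note you could also eliminate the skew cases in the paper's spirit: the restrictions $f^{(w)}|_K$, $f^{(v)}|_K$, $\chi_l|_K$ are $\Gal(K/\Q)$-equivariant into the pairwise non-isomorphic modules $\F_4(\psi)$, $\F_4(\psi^{-1})$, $\F_4$, so the relation forces both sides to vanish and collapses into your first two cases.
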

\begin{proof}
Let $\rho$ be a lift to $\W$. Suppose $\rho$ is reducible. 
According to Definition \ref{reducible}, there is some
$A\in GL(2,\W)$ such that 
$$
\rho':=A\rho A^{-1} = 
\begin{pmatrix}
\chi & * \\
0 & \chi'
\end{pmatrix},$$

while $$
\ker \rho = \ker \rho',
$$
and hence $\Q(\rho)=\Q(\rho')$. 
Similarly, if $\rho_n$ denotes reduction of $\rho \mod 2^n$, then $\ker\rho_n = \ker \rho'_n$.
In particular, $\overline{\rho}$ and $\overline{\rho'}$ cut out the same field $K$. Letting $\overline{*}$ denote reduction of $* \mod 2$, we have
$$
\begin{pmatrix}
\overline{\chi} & \overline{*} \\
0 & \overline{\chi'}
\end{pmatrix}
=
\overline{A} 
\begin{pmatrix}
\psi & 0 \\
0 & \psi^{-1}
\end{pmatrix}
\overline{A}^{-1}
=
\begin{pmatrix}
ad \psi + bc\psi^{-1} & ab(\psi+\psi^{-1})\\
cd(\psi+\psi^{-1}) & bc\psi +ad \psi^{-1} 
\end{pmatrix}\frac{1}{ad-bc},
$$
where
$$
\overline{A}=
\begin{pmatrix}
a&b \\
c&d
\end{pmatrix}.
$$

It follows that $cd(\psi+\psi^{-1})=0$ and therefore $cd=0$. 
If $c=0$, then 
$$
\overline{\rho'} = 
\begin{pmatrix}
\psi & (b/d)(\psi+\psi^{-1}) \\
0 & \psi^{-1}
\end{pmatrix},
$$
while the zero in the lower left corner of $\rho'$ implies that 
$\ker \overline{\rho}/ \ker \rho'_2$
will be contained in 
$$\begin{pmatrix}
* & *\\
0 & *
\end{pmatrix}.$$
This implies that $\F_4 (\psi^{-1} / \psi) = \F_4(\psi)$ does \emph{not} occur in the Jordan-H\"{o}lder sequence for $\ker \overline{\rho}/ \ker \rho'_2$ as a $\Gal(K/\Q)$-module. But 
$$
\ker \overline{\rho} /\ker \rho'_2 
=
\ker \overline{\rho} /\ker \rho_2 
\supseteq
\F_4(\psi)\oplus \F_4(\psi^{-1})
$$
as  $\Gal(K/\Q)$-modules, and hence we have reached a contradiction.
If $d=0$, 
$$
\overline{\rho'} = 
\begin{pmatrix}
\psi^{-1} & (a/c)(\psi+\psi^{-1}) \\
0 & \psi^{}
\end{pmatrix}.$$
By the same reasoning as in the previous case, we may then conclude that $\F_4 (\psi^{} / \psi^{-1}) = \F_4(\psi^{-1})$ does not occur in the Jordan-H\"{o}lder sequence for $\ker \overline{\rho}/ \ker \rho_2$, and again we have a contradiction. We conclude that $\rho$ is irreducible. 
\end{proof}
Note that the occurrence of the two cases in the proof above is exactly why we need to thicken the image by acting on it with the two distinct cohomology classes $f^{(v)}$ and $f^{(w)}$; one of them alone would not have been sufficient to force the lift to be irreducible.

\section{Generic smoothness at trivial primes} \label{trivialprimes}
Let $p$ be a trivial prime. 
Let $\mathcal{C}_p$ be the set of deformations of $\overline{\rho}|_{G_p}$
of the form
$$ \pi_p:
\sigma_p \mapsto 
\begin{pmatrix}
  p(1+z) & x \\
  0 & 1+z
\end{pmatrix}, \quad \tau_p \mapsto
\begin{pmatrix}
  1 & y \\
  0 & 1
\end{pmatrix}
$$
for some $n \geq 1$  and some $x,y,z \in \W/2^n\W$ all divisible by 2.
Since
$$
\begin{pmatrix}
p(1+z) & x \\
0 & 1+z
\end{pmatrix}
\begin{pmatrix}
1 & y \\
0 & 1
\end{pmatrix}
\begin{pmatrix}
p(1+z) & x \\
0 & 1+z
\end{pmatrix}^{-1}
=
\begin{pmatrix}
1 & yp \\
0 & 1
\end{pmatrix},
$$
$\pi_p$ respects the relation
$\sigma_p \tau_p \sigma_p^{-1}=\tau_p^p$ and hence defines a homomorphism on the tame quotient of the local Galois group at $p$. Also, $\pi_p$ is a lift of $\overline{\rho}|_{G_p}$ which is just the trivial representation, since $p$ splits completely in $K$. Note that $\pi_p$ is always liftable to characteristic zero. It is straightforward to check that the following three cohomology classes preserve $\mathcal{C}_p$:
$$
f_p^{(1)}: 
\sigma_p \mapsto 
\begin{pmatrix}
0 & 1 \\
0 & 0
\end{pmatrix}
, \quad \tau_p \mapsto 
\begin{pmatrix}
0 & 0 \\
0 & 0
\end{pmatrix},
\quad 
f_p^{(2)}: 
\sigma_p \mapsto 
\begin{pmatrix}
0 & 0 \\
0 & 0
\end{pmatrix}
, \quad \tau_p \mapsto 
\begin{pmatrix}
0 & 1 \\
0 & 0
\end{pmatrix}
$$
and
$$
f_p^{(3)}: 
\sigma_p \mapsto 
\begin{pmatrix}
1 & 0 \\
0 & 1
\end{pmatrix}
, \quad \tau_p \mapsto 
\begin{pmatrix}
0 & 0 \\
0 & 0
\end{pmatrix}.
$$
By an argument similar to the proof of Lemma \ref{N_7independent}, $f_p^{(1)}$ and $f_p^{(3)}$ are linearly independent. Next, note that $f_p^{(2)}$ is not in the span on $f_p^{(1)}$ and $f_p^{(3)}$, since any element in this span cuts out an unramified extension, while $f_p^{(2)}$ cuts out a ramified extension. Hence we have found a 3-dimensional subspace of $H^1(G_p, Ad)$ that preserves $\mathcal{C}_p$.

The main goal of the rest of this section is to show that once we lift our representation beyond mod 4, the subspace of $H^1(G_p, Ad(\overline{\rho}))$ that preserves the desired form locally at $p$ can be enlarged by one dimension. 
Similar computations were done in characteristic different from 2 in \cite{HR}. Here, we verify that such computations carry through in characteristic 2. 

Let $\mathcal{C}_p^{\operatorname{unr}}$ be the subset of $\mathcal{C}_p$  of all deformations which are unramified mod 4. Let $\mathcal{C}_p^{\operatorname{unr}, \geq 3}$ be the subset of $\mathcal{C}_p^{\operatorname{unr}}$ consisting of deformations to $\operatorname{GL}(2,\W/2^n\W)$ for $n\geq 3$ and which are all deformations of the same fixed mod 4 deformation of $\overline{\rho}$.
Let $g_p^{\operatorname{unr}} \in H^1(G_p, Ad(\overline{\rho}))$ be defined by
$$
\sigma_p \mapsto 
\begin{pmatrix}
  0 & 0 \\
  1 & 0
\end{pmatrix}, \quad \tau_p \mapsto
\begin{pmatrix}
  0 & 0 \\
  0 & 0
\end{pmatrix}.
$$
\begin{proposition} Let $\rho_n \in \mathcal{C}_p^{\operatorname{unr}, \geq 3}$ have coefficients in $\W/2^n\W$ and let $a\in \F_4$. Then $(I+2^{n-1} a g_p^{\operatorname{unr}})\rho_n \in \mathcal{C}_p^{\operatorname{unr}, \geq 3}$.
\end{proposition}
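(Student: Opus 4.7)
The plan is to exhibit an explicit strict equivalence bringing the representation $\rho'_n := (I + 2^{n-1} a g_p^{\text{unr}})\rho_n$ back into the canonical form of $\mathcal{C}_p$. I identify $a \in \F_4$ with its Teichm\"uller lift in $\W$ and set $E_{21} = \begin{pmatrix} 0 & 0 \\ 1 & 0 \end{pmatrix}$, so $g_p^{\text{unr}}(\sigma_p) = E_{21}$ and $g_p^{\text{unr}}(\tau_p) = 0$.

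First I compute $\rho'_n$ explicitly. Writing $\rho_n(\sigma_p) = \begin{pmatrix} p(1+z) & x \\ 0 & 1+z \end{pmatrix}$ and $\rho_n(\tau_p) = \begin{pmatrix} 1 & y \\ 0 & 1 \end{pmatrix}$, where $x,z$ are divisible by $2$ and (since $\rho_n$ is unramified mod $4$) $y$ is divisible by $4$, multiplication by $I+2^{n-1} a E_{21}$ leaves $\tau_p$ unchanged and alters $\rho_n(\sigma_p)$ only by installing a lower-left entry $2^{n-1} a p(1+z)$; the would-be correction $2^{n-1} a x$ in the lower-right vanishes mod $2^n$ since $2 \mid x$. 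I then verify the tame relation $\sigma_p \tau_p \sigma_p^{-1} = \tau_p^p$: the obstruction is $2^{n-1} a [E_{21}, \rho_n(\tau_p)^p]$, and since $\rho_n(\tau_p) \equiv I \mod 4$ implies $\rho_n(\tau_p)^p \equiv I \mod 2$, the commutator vanishes mod $2$ and $2^{n-1}$ times it vanishes mod $2^n$.

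The heart of the argument is conjugation by $B := I + 2^{n-2} a E_{21}$, which makes sense precisely because $n \geq 3$. Since $E_{21}^2 = 0$, the inverse $B^{-1} = I - 2^{n-2} a E_{21}$ holds exactly, and a direct computation shows the lower-left entry of $B^{-1} \rho'_n(\sigma_p) B$ equals
$$
2^{n-1} a p(1+z) + 2^{n-2} a(1-p)(1+z) = 2^{n-2} a(p+1)(1+z).
$$
Here the trivial prime hypothesis $p \equiv 3 \mod 4$ is decisive: $p+1 \equiv 0 \mod 4$, so the quantity vanishes mod $2^n$. A parallel calculation produces new diagonal entries of the form $p(1+z')$ and $1+z'$ with $z' := z - 2^{n-1} a u$ (writing $x = 2u$); the agreement of the upper-left with $p$ times the lower-right again uses $1+p \equiv 0 \mod 4$. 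At $\tau_p$, the conjugation contributes only corrections proportional to $2^{n-2} a y$, which vanish mod $2^n$ because $4 \mid y$. Thus $B^{-1} \rho'_n B$ lies in $\mathcal{C}_p$.

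Finally I check the remaining conditions of $\mathcal{C}_p^{\text{unr}, \geq 3}$. The $\tau_p$-image is unchanged mod $2^n$, hence unramified mod $4$. The modification itself is trivial mod $2^{n-1} \geq 4$, and mod $4$ the conjugation contributes terms of the shape $2ax$ and $2a(1-p)(1+z)$, both $\equiv 0 \mod 4$ since $2 \mid x$ and $2 \mid (1-p)$; so the mod $4$ reduction coincides with that of $\rho_n$, placing the result in the same fiber of $\mathcal{C}_p^{\text{unr}, \geq 3}$. The main subtlety throughout is the bookkeeping of $2$-adic valuations to see that the trivial prime condition $p \equiv 3 \mod 4$ supplies exactly the cancellation needed for a conjugation at the $2^{n-2}$ scale to absorb the $2^{n-1}$ defect introduced by the cocycle.
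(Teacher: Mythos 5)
Your proof is correct and takes essentially the same approach as the paper: both arguments absorb the $2^{n-1}$ perturbation by conjugating with a lower-unipotent matrix at the $2^{n-2}$ scale (congruent to $I$ mod $2$, which is where $n\geq 3$ enters), with the cancellation coming from $p\equiv 3 \bmod 4$. The only difference is cosmetic --- the paper runs the conjugation in the other direction and builds the unit $\bigl(\tfrac{p-1}{2}\bigr)^{-1}$ into its conjugating matrix, whereas you use $I+2^{n-2}aE_{21}$ directly and cancel via $4\mid(p+1)$.
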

\begin{proof}
Since $\rho_n \in \mathcal{C}_p^{\operatorname{unr}, \geq 3}$,
$$
\rho_n: \sigma_p \mapsto 
\begin{pmatrix}
p(1+z) & x \\
0 & 1+z
\end{pmatrix},\quad
\tau_p \mapsto 
\begin{pmatrix}
1 & y \\
0 & 1
\end{pmatrix}
$$
for some $x,y,z$ in $\W/2^n\W $ such that $2$ divides $x,y,z$ and $4$ divides $y$.
We compute
\begin{IEEEeqnarray*}{rCl}
(I+2^{n-1} a g_p^{\operatorname{unr}}(\sigma_p)) \rho_n (\sigma_p) 
&=&
\begin{pmatrix}
  1 & 0 \\
  2^{n-1} a & 1
\end{pmatrix}
\begin{pmatrix}
  p(1+z) & x \\
  0 & 1+z
\end{pmatrix}\\
&=&
\begin{pmatrix}
  p(1+z) & x \\
  2^{n-1}a p(1+z) & 2^{n-1}a x+1+z
\end{pmatrix}.
\end{IEEEeqnarray*}
Since 2 divides $x$, $2^{n-1}x=0 \mod 2^n$. Since $p \equiv 1 \mod 2$,
$$
2^{n-1}p(1+z) = 2^{n-1} p = 2^{n-1}
$$
in $\W/2^n \W$. Hence
$$
(I+2^{n-1} a g_p(\sigma_p)) \rho_n (\sigma_p) 
=
\begin{pmatrix}
  p(1+z) & x \\
  2^{n-1}a & 1+z
\end{pmatrix}.
$$
Clearly 
$$
(I+2^{n-1} a g_p(\tau_p)) \rho_n (\tau_p) 
=\begin{pmatrix}
1 & y \\
0 & 1
\end{pmatrix}
.
$$
In the remaining part of the proof, 
we will show that these two matrices 
are conjugate to the original matrices $\rho_n(\sigma_p)$ and $\rho_n(\tau_p)$ by a matrix that reduces to the identity mod 2. 
Let $c \in \W/2^n \W$, and compute  
\begin{IEEEeqnarray*}{rCl}
&& \bigg( I+
\begin{pmatrix}
0 & 0 \\
c & 0
\end{pmatrix}
\bigg)
\begin{pmatrix}
p(1+z) & x \\
0 & (1+z)
\end{pmatrix} 
\bigg( I-
\begin{pmatrix}
0 & 0 \\
c & 0
\end{pmatrix}
\bigg) \\
&=&
\begin{pmatrix}
  p(1+z) -cx & x \\ 
  c(p-1)(1+z) -c^2x & 1+z + cx 
\end{pmatrix}.
\end{IEEEeqnarray*}
Since $p-1$ is divisible by 2 and not by 4, 
$$
\frac{p-1}{2} \in \W^\times.
$$
If we let 
$$
c := 2^{n-2} \bigg( \frac{p-1}{2} \bigg)^{-1} \in \W
$$
we observe the following: First,
$$
c(p-1) = 2^{n-1}.
$$
Since 2 divides $z$, 
$$
c(p-1)(1+z) =2^{n-1}(1+z) = 2^{n-1}.   
$$
Note that $c^2x = 0, $ since this term is divisible by $2^{2(n-2)+1}$, and $ 2(n-2)+1\geq n$ for $n \geq 3.$
Also,
$$
p(1+z) - cx = p(1+z+cx-cx)- cx = p(1+z+cx) - cx(p+1) = p(1+z+cx),
$$
since $p+1$ is even and hence  
$
cx(p+1) 
$ is divisible by $2^n$.
In total, we have shown that 
$$
\bigg( I+
\begin{pmatrix}
0 & 0 \\
c & 0
\end{pmatrix}
\bigg)
\begin{pmatrix}
p(1+z) & x \\
0 & (1+z)
\end{pmatrix}
\bigg( I-
\begin{pmatrix}
0 & 0 \\
c & 0
\end{pmatrix}
\bigg) 
=
\begin{pmatrix}
  p(1+z+cx) & x \\ 
  2^{n-1}  & 1+z + cx 
\end{pmatrix}.
$$
Let $a_0 \in \W$ be a lift of $a\in \F_4$. A similar computation to the one above shows that 
\begin{IEEEeqnarray*}{rCl}
&& \bigg( I+
\begin{pmatrix}
0 & 0 \\
a_0 c & 0
\end{pmatrix}
\bigg)
\begin{pmatrix}
p(1+z-a_0cx) & x \\
0 & (1+z-a_0cx)
\end{pmatrix}
\bigg( I-
\begin{pmatrix}
0 & 0 \\
a_0 c & 0
\end{pmatrix}
\bigg) \\
&=&
\begin{pmatrix}
  p(1+z) & x \\ 
  a_0 c(p-1)  & 1+z 
\end{pmatrix} \\
&=& \begin{pmatrix}
  p(1+z) & x \\ 
    2^{n-1} a  & 1+z 
\end{pmatrix}.
\end{IEEEeqnarray*}
Finally,
$$
\bigg( I+
\begin{pmatrix}
0 & 0 \\
a_0c & 0
\end{pmatrix}
\bigg)
\begin{pmatrix}
1 & y \\
0 & 1
\end{pmatrix}
\bigg( I-
\begin{pmatrix}
0 & 0 \\
a_0c & 0
\end{pmatrix}
\bigg) 
=
\begin{pmatrix}
  1-a_0cy & y \\ 
  a_0c-a_0^2c^2y-a_0c  & 1+a_0cy 
\end{pmatrix}.
$$
Since $\rho_n$ is unramified mod 4, it follows that 4 divides $y$, and therefore
$$
cy=0.
$$
Hence the   matrix above equals 
$$\begin{pmatrix}
  1 & y \\ 
    & 1 
\end{pmatrix}.$$
Since $n \geq 3$, it follows that $2$ divides $c$, and hence the matrix
$$I+
\begin{pmatrix}
0 & 0 \\
a_0c & 0
\end{pmatrix}
$$
reduces to the identity mod 2. We conclude that $(I+2^{n-1}ag_p^{\text{unr}})\rho_n$ is the same deformation as
$$
\rho_n': \sigma_p \mapsto
\begin{pmatrix}
p(1+z-a_0cx) & x \\
0 & (1+z-a_0cx)
\end{pmatrix}, \quad 
\tau_p \mapsto
\begin{pmatrix}
  1 & y \\ 
    & 1 
\end{pmatrix},
$$
and clearly, $\rho_n' \in \mathcal{C}_p^{\operatorname{unr},\geq 3}$. This concludes the proof. Note that if $4$ divides $x$, then $(I+2^{n-1}ag_p^{\operatorname{unr}})\rho_n$ is the same deformation as $\rho_n$.
\end{proof}

\begin{corollary} \label{N_p}
The space $\mathcal{N}_p^{\operatorname{unr}}$ that preserves $\mathcal{C}_p^{\operatorname{unr}, \geq 3}$ is spanned by 
$f_p^{(1)},f_p^{(2)},f_p^{(3)}$ and $g_p^{\operatorname{unr}}$.
\end{corollary}

Let $\mathcal{C}_p^{\text{ram}}$ be the subset of $\mathcal{C}_p$ consisting of deformations that are ramified mod 4. 
Let $\mathcal{C}_p^{\text{ram}, \geq 3}$ be the subset of $\mathcal{C}_p^{\text{ram}}$ consisting of deformations to $\operatorname{GL}(2,\W/2^n\W)$ for $n\geq 3$ which are all deformations of the same mod 4 lift of the form
$$
\sigma_p \mapsto \begin{pmatrix}
p(1+z_0) & x_0 \\ 
0 & 1+z_0
\end{pmatrix},
\quad \tau_p \mapsto \begin{pmatrix}
1 & y_0 \\
0 & 1
\end{pmatrix},
$$
for some $x_0,y_0,z_0 \in 2\W/4\W$, where 4 does not divide $y_0$. In particular, 
$
\frac{y_0}{p-1}
$
defines an element of $\F_4^\times$.
Define $g_p^{\text{ram}}\in H^1(G_p,Ad(\overline{\rho}))$ as follows:
$$
g_p^{\text{ram}}: 
\sigma_p \mapsto 
\begin{pmatrix}
0 & 0 \\
1 & 0
\end{pmatrix}
, \quad \tau_p \mapsto 
\begin{pmatrix}
\frac{y_0}{p-1} & 0 \\
0 & \frac{y_0}{p-1}
\end{pmatrix}.
$$

\begin{proposition} \label{g^ram}
Let $\rho_n \in \mathcal{C}_p^{\operatorname{ram}, \geq 3}$, and let $a\in \F_4$.
Then 
$(I+2^{n-1}a g_p^{\operatorname{ram}})\rho_n \in \mathcal{C}_p^{\operatorname{ram}, \geq 3}$.
\end{proposition}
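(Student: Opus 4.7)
The plan is to mirror the proof of the previous proposition, now accommodating the fact that $4\nmid y_0$ in the ramified setting. First, I would compute $\sigma_p':=(I+2^{n-1}ag_p^{\text{ram}}(\sigma_p))\rho_n(\sigma_p)$ and $\tau_p':=(I+2^{n-1}ag_p^{\text{ram}}(\tau_p))\rho_n(\tau_p)$. Using $2\mid x,y,z$ together with $(2^{n-1})^2\equiv 0 \pmod{2^n}$ for $n\geq 3$, a direct simplification gives
$$
\sigma_p'=\begin{pmatrix}p(1+z) & x \\ 2^{n-1}a & 1+z\end{pmatrix},\qquad
\tau_p'=\begin{pmatrix}1+s & y \\ 0 & 1+s\end{pmatrix},
$$
with $s:=2^{n-1}a\cdot y_0/(p-1)\in 2^{n-1}\W/2^n\W$, so that $2s\equiv 0 \pmod{2^n}$. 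The crucial departure from the unramified case is the scalar shift $1+s\ne 1$ along the diagonal of $\tau_p'$; since conjugation preserves eigenvalues, $\tau_p'$ cannot be made unipotent by conjugation alone.

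The key idea is to conjugate by a matrix of the form $B=I+\begin{pmatrix}0 & 0 \\ b & 0\end{pmatrix}$, exactly as in the previous proof, but now with $b$ chosen so as to perform two tasks simultaneously: cancel the lower-left entry $2^{n-1}a$ of $\sigma_p'$ and offset the diagonal shift $s$ in $\tau_p'$. Writing $y=2\tilde{y}$, $y_0=2\tilde{y}_0$, $p-1=2p'$, the right choice is $b=2^{n-2}a\,\tilde{y}_0/(p'\tilde{y})$, which lies in $2\W/2^n\W$ because $n\geq 3$, so $B\equiv I\pmod 2$ is a valid strict equivalence. A short computation using $\tilde{y}\equiv \tilde{y}_0 \pmod 2$ shows that this single $b$ satisfies both $b(p-1)\equiv 2^{n-1}a \pmod{2^n}$ and $by\equiv s\pmod{2^n}$. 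That one $b$ serves both purposes is exactly the design criterion behind defining $g_p^{\text{ram}}(\tau_p)$ to be the scalar matrix $(y_0/(p-1))I$.

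Once $b$ is fixed, routine $2$-adic bookkeeping gives
$$
B\tau_p'B^{-1}=\begin{pmatrix}1 & y \\ 0 & 1\end{pmatrix},\qquad
B\sigma_p'B^{-1}=\begin{pmatrix}p(1+z') & x \\ 0 & 1+z'\end{pmatrix}
$$
for $z'=z+bx$; compatibility of the two diagonal entries of $B\sigma_p'B^{-1}$ uses $bx(p+1)\equiv 0\pmod{2^n}$, which holds because $p\equiv 3\pmod 4$ forces $4\mid p+1$, together with $2\mid b,x$ and $n\geq 3$. All quadratic corrections in $E=\begin{pmatrix}0 & 0 \\ b & 0\end{pmatrix}$ vanish by similar divisibility. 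Finally, $(I+2^{n-1}ag_p^{\text{ram}})\rho_n\equiv \rho_n\pmod 4$ since $4\mid 2^{n-1}$, and conjugation by $B\equiv I\pmod 2$ preserves the mod-$4$ strict-equivalence class, so the new deformation lies over the same fixed mod-$4$ lift used to define $\mathcal{C}_p^{\text{ram},\geq 3}$ and therefore belongs to it.

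The main obstacle, compared to the unramified case treated in the previous proposition, is precisely the scalar shift $1+s$ along the diagonal of $\tau_p'$: unlike the unramified setting where $4\mid y$ made analogous shifts vanish mod $2^n$, here $4\nmid y_0$ makes $s$ genuinely nonzero. The whole argument works only because the conjugation used to restore $\sigma_p'$ to upper-triangular form simultaneously shifts the diagonal entries of $\tau_p'$ by $\pm by$, and $g_p^{\text{ram}}(\tau_p)$ was rigged so that $by=s$ for the very same $b$. Tracking this simultaneous cancellation mod $2^n$, especially in the borderline case $n=3$ where $b$ is only divisible by $2$ and not by higher powers, is the technical heart of the proof.
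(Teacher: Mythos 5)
Your proposal is correct and follows essentially the same route as the paper's proof: twist by $2^{n-1}a\,g_p^{\text{ram}}$, then conjugate by $I+\begin{pmatrix}0&0\\ b&0\end{pmatrix}$ with $b$ of valuation $n-2$ chosen so that $b(p-1)\equiv 2^{n-1}a$ and $by\equiv 2^{n-1}a\,y_0/(p-1)$ simultaneously, which is exactly the role of the paper's element $ac$ with $c=2^{n-2}\tfrac{2}{p-1}$ (your $b$ differs from $ac$ only by the unit $\tilde y_0/\tilde y\equiv 1 \bmod 2$, so the same cancellations go through). The only differences are presentational: you conjugate the twisted lift back to upper-triangular form, while the paper exhibits the twisted matrices as conjugates of an explicit upper-triangular lift, and your appeal to $4\mid p+1$ is unnecessary (parity of $p+1$ plus the valuation $n-2$ of $b$ already suffices, as in the paper).
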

\begin{proof}
There are $x,y,z \in \W/2^n \W$ such that 
$$\rho_n: 
\sigma_p \mapsto \begin{pmatrix}
p(1+z) & x \\ 
0 & 1 +z
\end{pmatrix},
\quad \tau_p \mapsto \begin{pmatrix}
1 & y \\
0 & 1
\end{pmatrix},
$$
where $y \equiv y_0 \mod 4$. In particular, 
$$
2^{n-2} y_0 = 2^{n-2} y
$$
in $\W/2^n \W$, and hence
$$
2^{n-1} \frac{y_0}{p-1} 
= 2^{n-2}y_0 \frac{2}{p-1}
= 2^{n-2}y \frac{2}{p-1}
$$
Let 
$$
c=2^{n-2}\frac{2}{p-1}. 
$$
Then 
$$
2^{n-1} \frac{y_0}{p-1} = cy.
$$
Using this, together with $2^{n-1}y = 0$, we compute
\begin{IEEEeqnarray*}{rCl}
(I+2^{n-1} a g_p^{\text{ram}}(\tau_p)) \rho_n (\tau_p) 
&=&
\begin{pmatrix}
  1 - 2^{n-1}a\frac{y_0}{p-1} & 0 \\
  0 & 1+ 2^{n-1} a\frac{y_0}{p-1}
\end{pmatrix}
\begin{pmatrix}
  1 & y \\
  0 & 1
\end{pmatrix} \\
&=&
\begin{pmatrix}
  1-2^{n-1}a\frac{y_0}{p-1} & y \\
  0 & 1+2^{n-1}a\frac{y_0}{p-1}
\end{pmatrix} \\
&=& \begin{pmatrix}
  1-acy & y \\
  0 & 1+acy
\end{pmatrix}.
\end{IEEEeqnarray*}
At the same time,
\begin{IEEEeqnarray*}{rCl}
\bigg( I+
\begin{pmatrix}
0 & 0 \\
ac & 0
\end{pmatrix}
\bigg)
\begin{pmatrix}
1 & y \\
0 & 1
\end{pmatrix}
\bigg( I-
\begin{pmatrix}
0 & 0 \\
ac & 0
\end{pmatrix}
\bigg) 
&=&
\begin{pmatrix}
  1-acy & y \\ 
  ac-(ac)^2y-ac  & 1+acy 
\end{pmatrix}\\
&=&
\begin{pmatrix}
  1-acy & y \\ 
0  & 1+acy 
\end{pmatrix}
\end{IEEEeqnarray*}
Next, 
$$
(I+2^{n-1} a g_p^{\text{ram}}(\sigma_p)) \rho_n (\sigma_p) 
=
\begin{pmatrix}
  1  & 0 \\
  2^{n-1}a & 1 
\end{pmatrix}
\begin{pmatrix}
  p(1+z) & x \\
  0 & 1+z
\end{pmatrix}
=
\begin{pmatrix}
  p(1+z) & x \\
  2^{n-1}a & 1+z
\end{pmatrix} $$
while
$$
\bigg( I+
\begin{pmatrix}
0 & 0 \\
ac & 0
\end{pmatrix}
\bigg)
\begin{pmatrix}
p(1+z-acx) & x \\
0 & 1+z-acx
\end{pmatrix}
\bigg( I-
\begin{pmatrix}
0 & 0 \\
ac & 0
\end{pmatrix}
\bigg) 
=
\begin{pmatrix}
  p(1+z) & x \\ 
  2^{n-1}a  & 1+z
\end{pmatrix}.
$$
\end{proof}

\begin{corollary} \label{N_p^ram}
The space $\mathcal{N}_p^{\operatorname{ram}}$ 
that preserves $\mathcal{C}_p^{\operatorname{ram}, \geq 3}$ is
spanned by $f_p^{(1)},f_p^{(2)},f_p^{(3)}$ and $g_p^{\text{ram}}.$ 
\end{corollary}

In Lemma \ref{v,w}, we defined a pair of trivial primes $(v,w)$ at which the lift $\rho_2$ is ramified. At the prime $v$, we let $\mathcal{C}_v$ be the class of lifts $\pi_v$ defined above for any trivial prime $p$, and we let $\mathcal{N}_v$ be the space that preserves $\mathcal{C}_v^{\text{ram},\geq 3}$. At the trivial prime $w$, the definition of $(\mathcal{C}_w, \mathcal{N}_w)$ is similar to $(\mathcal{C}_v, \mathcal{N}_v)$, except that the matrices involved are lower triangular instead of upper triangular. We give the precise definitions in the remaining part of this section.

Let $w$ be a trivial prime as in Lemma \ref{v,w}. Let
$$
\pi_w : 
\sigma_w \mapsto 
\begin{pmatrix}
w^{-1}(1+z) & 0 \\
x & 1+z
\end{pmatrix}, \quad
\tau_w \mapsto 
\begin{pmatrix}
1 & 0 \\
y & 1
\end{pmatrix}
$$
where $x, y$ and $z$ in $2\W/2^n\W$ are free to vary. Since
$$
\begin{pmatrix}
w^{-1}(1+z) & 0 \\
x & 1+z
\end{pmatrix}
\begin{pmatrix}
1 & 0 \\
y & 1
\end{pmatrix}
\begin{pmatrix}
w^{-1}(1+z) & 0 \\
x & 1+z
\end{pmatrix}^{-1}
=
\begin{pmatrix}
1 & 0 \\
y w & 1
\end{pmatrix},
$$
$\pi_w$ defines a homomorphism on the tame quotient of the local Galois group at $w$. Again, $\pi_w$ is always liftable to characteristic zero. Let $\mathcal{C}_w$ denote the set of lifts of the form $\pi_w$. 
Let $\mathcal{C}_w^{\text{ram}}$ be the subset of $\mathcal{C}_w$ consisting of deformations that are ramified mod 4. 
Let $\mathcal{C}_w^{\text{ram}, \geq 3}$ be the subset of $\mathcal{C}_w^{\text{ram}}$ consisting of deformations to $GL(2,\W/2^n\W)$ for $n\geq 3$ which are all deformations of the same mod 4 lift of the form
$$
\sigma_w \mapsto \begin{pmatrix}
w^{-1}(1+z_0) &  \\ 
x_0 & 1+z_0
\end{pmatrix},
\quad \tau_p \mapsto \begin{pmatrix}
1 &  \\
y_0 & 1
\end{pmatrix},
$$
for some $x_0,y_0,z_0 \in 2\W/4\W$.
\begin{corollary} \label{corollary N_w}
The space $\mathcal{N}_w$ that preserves $\mathcal{C}_w^{\operatorname{ram},\geq 3}$
is spanned by the cohomology classes
$$
f_w^{(1)}: 
\sigma_w \mapsto 
\begin{pmatrix}
0 & 0 \\
1 & 0
\end{pmatrix}
, \quad \tau_w \mapsto 
\begin{pmatrix}
0 & 0 \\
0 & 0
\end{pmatrix};
\quad 
f_w^{(2)}: 
\sigma_w \mapsto 
\begin{pmatrix}
0 & 0 \\
0 & 0
\end{pmatrix}
, \quad \tau_w \mapsto 
\begin{pmatrix}
0 & 0 \\
1 & 0
\end{pmatrix}
$$
and 
$$
f_w^{(3)}: 
\sigma_w \mapsto 
\begin{pmatrix}
1 & 0 \\
0 & 1
\end{pmatrix}
, \quad \tau_w \mapsto 
\begin{pmatrix}
0 & 0 \\
0 & 0
\end{pmatrix};
\quad 
g_w^{\operatorname{ram}}: 
\sigma_w \mapsto 
\begin{pmatrix}
0 & 1 \\
0 & 0
\end{pmatrix}
, \quad \tau_w \mapsto 
\begin{pmatrix}
\frac{y_0}{w-1} & 0 \\
0 & \frac{y_0}{w-1}
\end{pmatrix}.
$$
We will also denote $g_w^{\operatorname{ram}}$ by $f_w^{(4)}$.
\end{corollary}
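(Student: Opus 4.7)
The plan is to mirror, verbatim up to matrix transposition, the argument already carried out at a general trivial prime $p$ in Proposition \ref{g^ram} and Corollary \ref{N_p^ram}. Structurally, the class $\mathcal{C}_w$ is obtained from $\mathcal{C}_p$ by conjugating every matrix by the antidiagonal involution $\bigl(\begin{smallmatrix} 0 & 1 \\ 1 & 0 \end{smallmatrix}\bigr)$, and each of the four classes $f_w^{(1)}, f_w^{(2)}, f_w^{(3)}, g_w^{\mathrm{ram}}$ is precisely the transpose of the corresponding class at a trivial prime $p$. So no new ideas are required beyond those already in the section; the task is to check that all the divisibility bookkeeping (i.e.\ $2\mid x,y,z$, $4\mid y-y_0$, $n\geq 3$) works identically after swapping rows and columns.

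First I would verify that the four proposed classes are $1$-cocycles and preserve $\mathcal{C}_w$. For $f_w^{(1)}, f_w^{(2)}, f_w^{(3)}$ the cocycle condition is immediate from the fact that their lifts to the dual numbers send inertia to the identity, so the tame relation $\sigma_w\tau_w\sigma_w^{-1}=\tau_w^{w}$ is automatically respected; the preservation of $\mathcal{C}_w$ is then a direct matrix computation analogous to the one made right after the definition of $f_p^{(i)}$ at the beginning of Section \ref{trivialprimes}.

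The only computation of substance concerns $g_w^{\mathrm{ram}}$ and is the direct analogue of Proposition \ref{g^ram}. Given $\rho_n\in\mathcal{C}_w^{\mathrm{ram},\geq 3}$ with $\rho_n(\tau_w)=\bigl(\begin{smallmatrix} 1 & 0 \\ y & 1\end{smallmatrix}\bigr)$ and $y\equiv y_0\bmod 4$, set $c=2^{n-2}\cdot 2/(w-1)$, so that $2^{n-1}\frac{y_0}{w-1}=cy$ in $\W/2^n\W$. Then I would compute that $(I+2^{n-1}a\,g_w^{\mathrm{ram}})\rho_n$ agrees with the result of conjugating a suitable $\rho_n'\in\mathcal{C}_w^{\mathrm{ram},\geq 3}$ by $I+\bigl(\begin{smallmatrix} 0 & ac \\ 0 & 0\end{smallmatrix}\bigr)$, i.e.\ by the transpose of the conjugating matrix used in the proof of Proposition \ref{g^ram}. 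Since $2\mid c$ for $n\geq 3$, this conjugating matrix reduces to the identity mod $2$, so the resulting deformation lies in $\mathcal{C}_w^{\mathrm{ram},\geq 3}$.

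It remains to argue that the four classes actually span all of $\mathcal{N}_w$, not just a subspace. Linear independence is clear: $f_w^{(1)}$ and $f_w^{(3)}$ are unramified and live in complementary $G_w$-stable summands, $f_w^{(2)}$ is ramified and hence not in the span of the unramified classes, while $g_w^{\mathrm{ram}}$ has diagonal component $\frac{y_0}{w-1}\in\F_4^\times$ on $\tau_w$ which none of the other three share. For the upper bound, the map $(x,y,z,u)\mapsto \pi_w$ augmented by the mod-$2^{n-1}$ twist produced above exhibits a $4$-dimensional smooth quotient of the local deformation ring $R_w$, and $\mathcal{N}_w$ is by definition the cotangent space of this quotient. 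The main (minor) obstacle is purely clerical: carefully verifying that the identities $2^{n-1}x=0$, $2^{n-1}p(1+z)=2^{n-1}$, $c^2x=0$, and $cy=0$ used in Proposition \ref{g^ram} all have correct transposed analogues here; no further cohomological input is needed.
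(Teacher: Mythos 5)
Your proposal is correct and follows essentially the same route as the paper: the first three classes are checked directly, and the preservation of $\mathcal{C}_w^{\text{ram},\geq 3}$ by $g_w^{\text{ram}}$ is obtained by redoing the computation of Proposition \ref{g^ram} with the roles of rows and columns interchanged, using the same constant $c=2^{n-2}\cdot\frac{2}{w-1}$ and the same divisibility bookkeeping. Your closing discussion of independence and spanning is harmless but not needed, since $\mathcal{N}_w$ is taken (as in Corollary \ref{N_p^ram}) to be the span of these four classes, so the only content to verify is preservation; also note that $\mathcal{C}_w$ is not literally the conjugate-transpose of $\mathcal{C}_p$ (the diagonal carries $w^{-1}$ rather than $w$, as required by the tame relation), though this does not affect your actual computations.
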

\begin{proof}
    It is easy to verify that $f_w^{(1)},f_w^{(2)}$ and $f_w^{(3)}$ preserve
    $\mathcal{C}_w$, so in particular they preserve $\mathcal{C}_w^{\operatorname{ram},\geq 3}$. The proof that  
    $g_w^{\operatorname{ram}}$ preserves  $\mathcal{C}_w^{\operatorname{ram},\geq 3}$ is similar to the proof 
    of Lemma \ref{g^ram}.
\end{proof}

\section{Local liftability mod 4 at the ramified primes}
Let $S=\{\infty, 2, 7, l, v, w\}$. Ultimately, we will show that $$\rho_2|_{G_q} = (I+2(f^{(v)}+f^{(w)}))\xi_2|_{G_q} \in \mathcal{C}_q$$ for all $q \in S$. First, we deal with $\infty, 2, 7,$ and $l$.
We have already seen that $\xi_2 \in \mathcal{C}_q$ for every $q\in \{\infty, 2, 7, l\}$. Hence, it suffices to show that 
$$(f^{(v)}+f^{(w)})|_{G_q} \in \mathcal{N}_q$$ for every $q \in \{\infty, 2, 7, l\}$. 
By Lemma \ref{H^1(G_7)}, 
$$
(f^{(v)}+f^{(w)})|_{G_7} \in
H^1(G_7, 
\begin{pmatrix}
& * \\
* &
\end{pmatrix}
) = 0,
$$
and by Lemma \ref{H^1(G_l)}, 
$$
(f^{(v)}+f^{(w)})|_{G_l} \in
H^1(G_l, 
\begin{pmatrix}
& * \\
* &
\end{pmatrix}
) = 0.
$$
Next, since the fields $K^{(v)}$ and $K^{(w)}$ are totally real, 
$(f^{(v)}+f^{(w)})|_{G_\infty} =0$.
By Lemma \ref{N_2}, 
$$
(f^{(v)}+f^{(w)})|_{G_2} \in
H^1(G_2, 
\begin{pmatrix}
& * \\
* &
\end{pmatrix}) \subseteq \mathcal{N}_2.
$$
Next, we consider the primes $v$ and $w$. Recall that we have chosen $v$ and $w$ to remain prime in $\Q(\sqrt{l})$. Since $\chi_l: G_\Q \to \F_2$ is an additive character, it follows that
$$
\chi_l(\sigma_v)=1, \quad \chi_l(\sigma_w)=1.
$$
Also, $v$ and $w$ are trivial primes, and hence $\psi|_{G_v}$ and $\psi|_{G_w}$ are trivial. We find 
$$
\xi_2|_{G_v}: \sigma_v \mapsto
\begin{pmatrix}
3 & \\
& 1
\end{pmatrix} =
\begin{pmatrix}
v & \\
& 1
\end{pmatrix},
\quad \tau_v \mapsto 
I$$
since $v \equiv 3 \mod 4$. We conclude that $\xi_2 \in \mathcal{C}_v$. Next, 
$$
\xi_2|_{G_w}: \sigma_w \mapsto
\begin{pmatrix}
3 & \\
& 1
\end{pmatrix} =
\begin{pmatrix}
w^{-1} & \\
& 1
\end{pmatrix},
\quad \tau_w \mapsto 
I.$$
Here, we use that $w \equiv 3 \mod 4$, and hence $w^{-1} \in \W$ also satisfies $w^{-1} \equiv 3 \mod 4$.
We conclude that $\xi_2|_{G_w} \in \mathcal{C}_w$. 

\emph{Remark:} The reason for introducing ramification at $l$ by twisting the upper left corner by $\chi_l$ is precisely to force $\xi_2$ to be of the desired form at $v$ and $w$.

Clearly, $f^{(v)}|_{G_v} $ preserves $\mathcal{C}_v$ 
and $f^{(w)}|_{G_w}$ preserves $\mathcal{C}_w.$
Recall that we started by choosing $v$, and then we chose $w$ such that $f^{(v)}(\sigma_w)=0$. This is equivalent to $f^{(v)}|_{G_w} =0$. 
Hence, it only remains to show that $f^{(w)}|_{G_v} = 0$. We use the global reciprocity law.

\begin{lemma} \label{localpairing}
Let $p$ be a trivial prime. The unramified cohomology classes in 
$H^1(G_p, \F_4 )$ and $H^1(G_p, (\F_4)^* )$ are exact annihilators of each other under the local pairing. If $f \in H^1(G_p, \F_4)$ is unramified and $\varphi \in H^1(G_p, (\F_4)^*)$ is ramified, then 
$$
f \neq 0 \implies f \cup \varphi \neq 0.
$$
If $f \in H^1(G_p, \F_4)$ is ramified and $\varphi \in H^1(G_p, (\F_4)^*)$ is unramified, then 
$$
\psi \neq 0 \implies f \cup \varphi \neq 0.
$$
\end{lemma}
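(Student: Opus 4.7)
The plan is to deduce the first assertion from the standard local Tate duality and the remaining two from an explicit cup product computation on the tame quotient of $G_p$.

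For the mutual-annihilator statement, I will apply the standard local Tate duality for the finite module $\F_4$ with trivial $G_p$-action. Since $p$ is a trivial prime, $p$ is odd, so the order of $\F_4$ is prime to the residue characteristic, and local duality yields a perfect $\F_2$-bilinear pairing. Unramified classes factor through the procyclic unramified quotient of $G_p$, whose $H^2$ vanishes, so unramified classes automatically pair to zero with one another. A dimension count via the local Euler--Poincar\'e formula and local duality then shows that $\dim_{\F_2} H^1_{\text{unr}}(G_p, \F_4) = \dim_{\F_2} H^0(G_p, \F_4) = 2$, which is exactly half of $\dim_{\F_2} H^1(G_p, \F_4) = 4$, and analogously for $\F_4^*$. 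Combined with the non-degeneracy of the full local pairing, this forces the two unramified subspaces to be exact annihilators.

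For the non-vanishing assertions, I will work on the tame presentation of $G_p$. Because $p$ is odd, the wild inertia is pro-(odd) and contributes nothing to cohomology with $\F_4$-coefficients, so cohomology is pulled back from the tame quotient, which is topologically generated by a Frobenius lift $\sigma_p$ and an inertia generator $\tau_p$ with $\sigma_p \tau_p \sigma_p^{-1} = \tau_p^p$. With trivial action and $p$ odd, the tame relation imposes no constraint on 1-cocycles, yielding an $\F_4$-basis $\{f^{\text{unr}}, f^{\text{ram}}\}$ of $H^1(G_p, \F_4)$ consisting of the unramified class ($\sigma_p \mapsto 1, \tau_p \mapsto 0$) and the purely tame ramified class ($\sigma_p \mapsto 0, \tau_p \mapsto 1$); an analogous basis is available for $H^1(G_p, \F_4^*)$ after identifying $\F_4^* \simeq \F_4$ via Lemma~\ref{Bockle}.

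Using the standard cup product formula $(f \cup \varphi)(g_1, g_2) = f(g_1) \otimes \varphi(g_2)$ for 1-cocycles with trivial action, I will unwind the resulting 2-cocycle on the tame presentation and check that $f^{\text{unr}} \cup f^{\text{ram}}$ is a generator of $H^2(G_p, \F_4 \otimes \F_4^*)$, equivalently, that the local pairing is non-degenerate on the unramified-by-ramified block. By $\F_4$-bilinearity of the cup product, any multiple $f \cup \varphi$ with $f = a\, f^{\text{unr}}$ nonzero and $\varphi$ containing a nonzero $f^{\text{ram}}$-component is a nonzero scalar multiple of this generator, and therefore nonzero. The symmetric case of $f$ ramified and $\varphi$ unramified is handled identically. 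The main place requiring care is keeping the identifications of $\F_4^* \simeq \F_4$ (from Lemma~\ref{Bockle}) and the local invariant map on $H^2(G_p, \F_4 \otimes \F_4^*)$ consistent throughout, but no conceptual obstacle is expected beyond this bookkeeping.
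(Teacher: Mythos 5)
Your argument is correct in substance, but it takes a genuinely different route from the paper, whose entire proof of this lemma is the citation of Fact 37 of \cite{HR} (p.~934). Your first assertion is handled by the standard mechanism: unramified classes inflate from the procyclic unramified quotient, whose $H^2$ with torsion coefficients vanishes, so the two unramified subspaces annihilate each other, and the count $\dim_{\F_2} H^1_{\mathrm{unr}}(G_p,\F_4)=2=\tfrac{1}{2}\dim_{\F_2}H^1(G_p,\F_4)$ (Euler--Poincar\'e plus local duality, using that $p$ is odd and the $G_p$-action is trivial because $p$ splits in $K$) upgrades this to exact annihilation. For the nonvanishing assertions your plan is also sound: wild inertia is pro-$p$ with $p$ odd, so all the cohomology is inflated from the tame quotient, $H^1(G_p,\F_4)$ is free of rank two over $\F_4$ on the unramified and tame generators, and the crucial computation that the unramified generator cupped with the tame generator is nontrivial in $H^2$ is correct --- it amounts to the Hilbert symbol $(u,p)_p=\left(\tfrac{u}{p}\right)=-1$ for a nonsquare unit $u$. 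What your route buys is a self-contained, explicit description of the pairing on the unramified/ramified blocks; what the paper's citation buys is brevity.

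One point you relegate to ``bookkeeping'' is in fact the crux and should be made explicit: the element-wise nonvanishing claims hold only for the $\F_4$-bilinear pairing obtained by identifying $(\F_4)^*$ with $\mathrm{Hom}_{\F_4}(\F_4,\F_4)$ via Lemma~\ref{Bockle} (equivalently, by pairing $f$ against all $\F_4$-multiples of $\varphi$). For the raw $\Q/\Z$-valued Tate pairing the claims are literally false: writing $f=\chi_{\mathrm{unr}}\otimes a$ and taking $\varphi$ with ramified component $\chi_{\mathrm{ram}}\otimes b^*$, where $a\in\F_4\setminus\{0\}$ and $0\neq b^*\in\mathrm{Hom}_{\F_2}(\F_4,\F_2)$, the invariant of $f\cup\varphi$ is $b^*(a)$ under $H^2(G_p,\mu_2)\simeq\F_2$, and this vanishes for the nonzero $a$ in $\ker b^*$. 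So ``nonzero scalar multiple of a generator'' must be read in $H^2(G_p,\F_4)\simeq\F_4$, not in $H^2(G_p,\mu_2)$. Your appeal to $\F_4$-bilinearity is exactly where this choice is hidden; once the pairing is fixed to be the $\F_4$-valued one (noting that for $\F_4$-subspaces the two notions of annihilator agree, so the first assertion is insensitive to the choice), your argument goes through. Incidentally, the ``$\psi\neq 0$'' in the last implication of the statement should read ``$\varphi\neq 0$''.
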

\begin{proof}
    See \cite{HR} Fact 37, p. 934. 
\end{proof}

\begin{lemma} \label{globalreciprocity}
$$
f^{(v)}|_{G_w} = 0
\iff 
f^{(w)}|_{G_v} = 0.
$$
\end{lemma}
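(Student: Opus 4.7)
The natural approach is to apply the global reciprocity law to the cup product $f^{(v)} \cup f^{(w)}$. By Lemma \ref{character}, $\F_4(\psi^{-1})^* \simeq \F_4(\psi)$, so the multiplication pairing $\F_4(\psi^{-1}) \times \F_4(\psi) \to \F_4$ yields a well-defined class $f^{(v)} \cup f^{(w)} \in H^2(G_S, \F_4)$, and global reciprocity gives
$$\sum_{q \in S} \text{inv}_q\bigl(f^{(v)} \cup f^{(w)}|_{G_q}\bigr) = 0.$$

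The central task is then to show that every local term except those at $v$ and $w$ vanishes. At $\infty$, both $K^{(v)}$ and $K^{(w)}$ are totally real, so each restriction vanishes individually. At $7$ and $l$, Lemmas \ref{H^1(G_7)} and \ref{H^1(G_l)} show that the off-diagonal cohomology is zero, so again $f^{(v)}$ and $f^{(w)}$ individually restrict to zero. At $2$, the key observation is that $K^{(v)}/\Q$ and $K^{(w)}/\Q$ are unramified at $2$ (since $K/\Q$ is ramified only at $7$ and $K^{(v)}/K$, $K^{(w)}/K$ are ramified only above $v, w$), so the restrictions lie in $H^1_{\text{unr}}(G_2, \F_4(\psi^{\pm 1}))$. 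Since $2$ is inert in $K$, Frobenius at $2$ acts on $\F_4(\psi^{\pm 1})$ as a primitive cube root of unity; consequently both the invariants and the $(\mathrm{Frob}-1)$-coinvariants of $\F_4(\psi^{\pm 1})$ vanish, forcing $H^1_{\text{unr}}(G_2, \F_4(\psi^{\pm 1})) = 0$. Thus $f^{(v)}|_{G_2} = f^{(w)}|_{G_2} = 0$, and the local contribution at $2$ is zero.

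After these cancellations, reciprocity reduces to
$$\text{inv}_v\bigl(f^{(v)} \cup f^{(w)}|_{G_v}\bigr) + \text{inv}_w\bigl(f^{(v)} \cup f^{(w)}|_{G_w}\bigr) = 0.$$
At $v$, the class $f^{(v)}|_{G_v}$ is ramified and nonzero (it cuts out the genuinely ramified extension $K^{(v)}/K$), while $f^{(w)}|_{G_v}$ is unramified because $K^{(w)}/K$ is unramified at $v$. Since $v$ is a trivial prime, $\psi|_{G_v}$ is trivial, so the coefficient modules reduce to $\F_4$ locally and Lemma \ref{localpairing} applies, giving $f^{(v)} \cup f^{(w)}|_{G_v} = 0 \iff f^{(w)}|_{G_v} = 0$. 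The symmetric argument at $w$ yields $f^{(v)} \cup f^{(w)}|_{G_w} = 0 \iff f^{(v)}|_{G_w} = 0$. Since we are in characteristic $2$, the reciprocity relation $\text{inv}_v + \text{inv}_w = 0$ reads $\text{inv}_v = \text{inv}_w$, so one vanishes if and only if the other does, yielding the desired equivalence.

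The main obstacle is the vanishing at $2$: $H^1(G_2, \F_4(\psi^{\pm 1}))$ is itself nonzero (one-dimensional over $\F_4$, as computed in Section \ref{2 and infinity}), but the unramified subspace is trivial due to the nontrivial cube root action of Frobenius at $2$. Recognizing that $f^{(v)}$ and $f^{(w)}$ are globally unramified at $2$ and then using this local computation is what kills the potentially dangerous local cup product at $2$, making global reciprocity collapse onto the two terms at $v$ and $w$.
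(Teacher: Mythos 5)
Your proposal is correct and follows essentially the same route as the paper: global reciprocity applied to $f^{(v)}\cup f^{(w)}$ (using Lemma \ref{character} to view $f^{(w)}$ as valued in the dual module), reduction of the sum of local invariants to the two terms at $v$ and $w$, and then Lemma \ref{localpairing} at the trivial primes $v$ and $w$ (ramified class paired against unramified class) to convert vanishing of each cup product into vanishing of $f^{(w)}|_{G_v}$, respectively $f^{(v)}|_{G_w}$. The only, harmless, divergence is at the auxiliary places: the paper kills each local term by showing both classes are unramified there, so their cup product lies in $H^2(G_p/I_p,\mu)=0$, whereas you kill the restrictions themselves, via $H^1(G_7,\F_4(\psi^{\pm 1}))=H^1(G_l,\F_4(\psi^{\pm 1}))=0$ and the vanishing of the unramified part of $H^1(G_2,\F_4(\psi^{\pm 1}))$ forced by the order-three Frobenius action at the inert prime $2$ --- both computations are valid, and the overall argument is the same.
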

\begin{proof}
Recall that $f^{(v)} \in 
H^1(G_\Q, \F_4(\psi^{-1}))$.
By Lemma \ref{character},
$$
\F_4(\psi^{-1})^* \simeq \F_4(\psi),
$$
which implies that
$$
H^1(G_\Q, \F_4(\psi^{-1})^*)
\simeq
H^1(G_\Q, \F_4(\psi)).
$$
It follows that we may regard $f^{(w)}$ as an element of 
$H^1(G_\Q, \F_4(\psi^{-1})^*)$.
By the global reciprocity law,
$$
\sum_p \text{inv}_p ( 
f^{(v)}|_{G_p} \cup f^{(w)}|_{G_p} )= 0.
$$
Let $p$ be a prime different from $v$ and $w$, let $f$ and $M$ denote $f^{(v)}$ and $\F_4(\psi^{-1})$ or $f^{(w)}$ and $\F_4(\psi^{})$, respectively, 
and consider the commutative diagram
\[
\begin{tikzcd}
H^1(G_p/I_p, M^{I_p}) \arrow[r, "\text{inf}"]
& H^1(G_p, M) \arrow[r] \arrow[d, "\text{res}" ]
 & H^1(I_p,M) \arrow[d, "\text{res}"]\\
 & H^1(G_p\cap \ker\overline{\rho},M) \arrow[r] & H^1(I_p \cap \ker\overline{\rho},M) 
\end{tikzcd}
\]\
We have injective maps
\[
\begin{tikzcd}
I_p/I_p \cap \ker\overline{\rho} \arrow[r, hook] 
& G_p/ G_p \cap \ker\overline{\rho} \arrow[r, hook] 
& G_\Q / \ker\overline{\rho},
\end{tikzcd}
\]
so the order of these quotients all divide 3. Since $\#M =4$, it follows that the vertical restriction maps are injective (see \cite{koch} Theorem 3.15, page 34). 
Since $K^{(v)}/K$ and $K^{(w)}/K$ are unramified at $p$, 
it follows that 
$$
\big( f|_{\ker\overline{\rho}} \big)|_{I_p \cap \ker\overline{\rho}}
= f|_{I_p \cap \ker\overline{\rho}} = 0.
$$
Since the right-most vertical map in the diagram above is injective, it follows that 
$$
f|_{I_p} =0.
$$
By exactness of the top row, $f$ inflates from $H^1(G_p/I_p,M^{I_p})$. 
We conclude that 
$$f^{(v)}|_{G_p} \in H^1(G_p/I_p,\F_4(\psi^{-1})^{I_p}), \quad f^{(w)}|_{G_p} \in 
H^1(G_p/I_p,\F_4(\psi)^{I_p})$$ 
and hence
$$
f^{(v)}|_{G_p} \cup f^{(w)}|_{G_p} \in H^2(G_p/I_p, \mu) = 0.
$$
Here, $\mu$ is the roots of unity. The cohomological triviality follows from the fact that $\mu$ is a torsion group for which $\hat{\Z}$ has cohomological dimension one. Hence, the global reciprocity law implies that
$$
\text{inv}_v( 
f^{(v)}|_{G_v} \cup f^{(w)}|_{G_v})
= 
-\text{inv}_w(
f^{(v)}|_{G_w} \cup f^{(w)}|_{G_w}
).
$$
If $p$ is a trivial prime, then the $G_p$ action on $Ad(\overline{\rho})$ is trivial, and hence
$$
H^1(G_p, 
\begin{pmatrix}
& * \\
&
\end{pmatrix}
)
=
H^1(G_p, \F_4).
$$
Note that $f^{(v)}|_{G_w}\in H_{\operatorname{unr}}^1(G_w, \F_4)$.
Assume $f^{(v)}|_{G_w} = 0$. 
Then
$$
f^{(v)}|_{G_w} \cup f^{(w)}|_{G_w} = 0,
$$
and by global reciprocity, 
$$
f^{(v)}|_{G_v} \cup f^{(w)}|_{G_v} =0.
$$
Since $f^{(v)}|_{G_v} \in H^1(G_v, \F_4)$ is ramified and 
$f^{(w)}|_{G_v} \in H^1(G_v, \F_4)$ is unramified, it follows from Lemma \ref{localpairing} that 
$$
f^{(w)}|_{G_v} =0.
$$
The other implication follows similarly.
\end{proof}

\section{The global setting}
Let $S=\{\infty,2,7,l,v,w\}$. Recall that $G_S = \Gal(\Q_S/\Q)$ where $\Q_S$ is the maximal extension of $\Q$ unramified outside $S$. For any $G_S$-module $M$ over $\F_4$, let $\Sha^1_S(M)$ be the kernel of the map 
$$
H^1(G_S, M) \to \bigoplus_{q\in S} H^1(G_q, M).
$$
\begin{lemma} \label{trivialsha}
    Let $M$ be a $\F_4[G_S]$-submodule of $Ad(\overline{\rho})$ or $Ad(\overline{\rho})^*$. Then
$$\Sha^1_S(M) = 0.$$
\end{lemma}

\begin{proof}
 Let $f \in \Sha_S^1(M)$. Suppose $f\neq 0$. 
 Note that $M$ is naturally a $\F_4[G_S/\ker\overline{\rho}]$-module. 
Since the order of $G_S / \ker\overline{\rho}$ is prime to the order of $M$, it follows from \cite{koch} Theorem 3.15 that
 $$
 H^1( G_S/ \ker\overline{\rho}, M ) = 0.
 $$
 We conclude that $f|_{\ker\overline{\rho}} \neq 0$. Hence $f$ cuts out a nontrivial extension $K_f$ of $K$ unramified outside $S$. Since $f|_{G_q}=0$ for all $q\in S$, all the primes in $S$ split completely in $K_f$. In particular, the extension $K_f/K$ must be unramified everywhere. But since the class group of $K$ is trivial, there are no unramified extensions of $K$.
\end{proof}

Recall that $\rho_2 \in \mathcal{C}_q$ for all $q\in S$. Hence there are no local obstructions to deforming $\rho_2$ to $\W/8\W$. Since $S$ contains $\infty$ and $2$, we know that 
$
\Sha^2_S(Ad(\overline{\rho})) 
$
is dual to $\Sha^1_S(Ad(\overline{\rho})^*)$, which is trivial by Lemma \ref{trivialsha}. 
By Proposition \ref{local-global lift}, we may deform $\rho_2$ to a representation
$$\rho_3: G_S \to GL(2, \W/8\W).
$$
Next, we recall Wiles' formula (see Proposition 1.6 in \cite{Wiles}). Let $T$ be a finite set of primes containing $S=\{\infty,2,7,l,v,w \}$. Let $M$ be a Galois stable subspace of $Ad(\overline{\rho})$ or $Ad(\overline{\rho})^*$. For each $q\in S,$ let $\mathcal{L}_q$ be a subspace of $H^1(G_q, M)$ with annihilator $\mathcal{L}_q^\perp \subseteq H^1(G_q, M^*)$ under the local pairing. Then 
\begin{IEEEeqnarray*}{ll}
 & \dim H^1_\mathcal{L} (G_T, M) - \dim H^1_{\mathcal{L}^\perp} (G_T, M^*) \\
= \quad & 
\dim H^0 (G_T, M) - \dim H^0 (G_T, M^*) 
+
\sum_{q\in T} \dim \mathcal{L}_q - \dim H^0(G_q, M).  
\end{IEEEeqnarray*}

\begin{proposition} \label{globalsetting}
The global setting has the property that   
$$
\dim H^1_{\mathcal{N}^\perp}(G_S, Ad(\overline{\rho})^*)
=
\dim H^1_{\mathcal{N}}(G_S, Ad(\overline{\rho})) 
+1.
$$
\end{proposition}

\begin{proof}
By Lemma \ref{N_7},
$$
\dim \mathcal{N}_7 - \dim H^0(G_7, Ad(\overline{\rho})) = 0,
$$
and by Lemma \ref{N_l}, 
$$
\dim \mathcal{N}_l - \dim H^0(G_l, Ad(\overline{\rho})) = 0.
$$
Recall that 
$\mathcal{N}_v = \mathcal{N}_v^{\operatorname{ram}}$
and $\mathcal{N}_w = \mathcal{N}_w^{\operatorname{ram}}$. 
By Corollary \ref{N_p^ram}, it holds that 
$$
\dim \mathcal{N}_v - \dim H^0(G_v, Ad(\overline{\rho})) = 0,
\quad
\dim \mathcal{N}_w - \dim H^0(G_w, Ad(\overline{\rho})) = 0.
$$
By Proposition \ref{R_2} and the remarks at the end of Section
\ref{2 and infinity},
$$
\dim \mathcal{N}_\infty  - \dim H^0(G_\infty, Ad(\overline{\rho})) = -4, \quad 
\dim \mathcal{N}_2 - \dim H^0(G_2, Ad(\overline{\rho})) = 5 - 2 = 3.
$$
Clearly, 
$$
\dim H^0 (G_S, Ad(\overline{\rho})) = \dim H^0 (G_S, Ad(\overline{\rho})^*) =2.
$$
By Wiles' formula (Lemma \ref{Wiles}),
\begin{IEEEeqnarray*}{ll}
 & \dim H^1_\mathcal{N} (G_S, Ad(\overline{\rho})) 
 - \dim H^1_{\mathcal{N}^\perp} (G_S, Ad(\overline{\rho})^*) \\
= \quad & 
\dim H^0 (G_S, Ad(\overline{\rho})) - \dim H^0 (G_S, Ad(\overline{\rho})^*) 
+
\sum_{q\in S} \dim \mathcal{N}_q - \dim H^0(G_q, Ad(\overline{\rho})).  
\end{IEEEeqnarray*}

It follows that 
$$
\dim H^1_{\mathcal{N}^\perp}(G_S, Ad(\overline{\rho})^*) 
=
\dim H^1_{\mathcal{N}}(G_S, Ad(\overline{\rho})) + 1.$$
\end{proof}

\section{The Selmer group 
\texorpdfstring{$H^1_{\mathcal{N}}(G_S, Ad(\overline{\rho}))$}{Selmer}
 is of rank two}
We start by recalling the definition of the spaces $\mathcal{N}_v$ and $\mathcal{N}_w$.

\begin{lemma} \label{N_v}
The space $\mathcal{N}_v$ is spanned by the cohomology classes
$$
f_v^{(1)}: 
\sigma_v \mapsto 
\begin{pmatrix}
0 & 1 \\
0 & 0
\end{pmatrix}
, \quad \tau_v \mapsto 
\begin{pmatrix}
0 & 0 \\
0 & 0
\end{pmatrix},
\quad 
f_v^{(2)}: 
\sigma_v \mapsto 
\begin{pmatrix}
0 & 0 \\
0 & 0
\end{pmatrix}
, \quad \tau_v \mapsto 
\begin{pmatrix}
0 & 1 \\
0 & 0
\end{pmatrix}
$$
and 
$$
f_v^{(3)}: 
\sigma_v \mapsto 
\begin{pmatrix}
1 & 0 \\
0 & 1
\end{pmatrix}
, \quad \tau_v \mapsto 
\begin{pmatrix}
0 & 0 \\
0 & 0
\end{pmatrix},
\quad 
g_v^{\text{\upshape{ram}}}: 
\sigma_v \mapsto 
\begin{pmatrix}
0 & 0 \\
1 & 0
\end{pmatrix}
, \quad \tau_v \mapsto 
\begin{pmatrix}
1 & 0 \\
0 & 1
\end{pmatrix}.
$$
We will also denote $g_v^{\text{\upshape{ram}}}$ by $f_v^{(4)}$.
\end{lemma}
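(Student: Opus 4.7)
The plan is to mirror the proof of Corollary \ref{N_p^ram} (or equivalently Corollary \ref{corollary N_w} with upper- and lower-triangular roles swapped), applied to the trivial prime $v$. The class $\mathcal{C}_v$ consists of the upper-triangular lifts $\pi_v$ introduced in Section \ref{trivialprimes}, and $\mathcal{N}_v$ is by definition the subspace of $H^1(G_v, Ad(\overline{\rho}))$ that preserves $\mathcal{C}_v^{\text{ram}, \geq 3}$.

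First I would verify by direct matrix algebra that $f_v^{(1)}, f_v^{(2)}, f_v^{(3)}$ each preserve the entire class $\mathcal{C}_v$ for any $n \geq 1$: $f_v^{(1)}$ only modifies the parameter $x$ in $\sigma_v$, $f_v^{(2)}$ only modifies $y$ in $\tau_v$, and $f_v^{(3)}$ scales both diagonal entries of $\sigma_v$ by $1 + 2^{n-1} a$. In each case $(I + 2^{n-1} a f_v^{(i)}) \pi_v$ is again of the form $\pi_v$ with new parameters $x', y', z' \in 2\W/2^n\W$, so these three cocycles lie in $\mathcal{N}_v$.

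The central step is verifying that $g_v^{\text{ram}}$ preserves $\mathcal{C}_v^{\text{ram}, \geq 3}$. This is the exact analogue of Proposition \ref{g^ram}: fix $\rho_n \in \mathcal{C}_v^{\text{ram}, \geq 3}$, compute $(I + 2^{n-1} a g_v^{\text{ram}}) \rho_n$, and exhibit a conjugating matrix of the form $I + \bigl(\begin{smallmatrix} 0 & 0 \\ c & 0 \end{smallmatrix}\bigr)$ with $c \in 2\W/2^n\W$ that returns the perturbed lift to $\mathcal{C}_v^{\text{ram}, \geq 3}$. The critical choice is $c = 2^{n-2} a \bigl((v-1)/2\bigr)^{-1}$, well-defined because $v \equiv 3 \pmod 4$ makes $(v-1)/2$ a unit in $\W$; this absorbs the spurious $2^{n-1}$-contribution on the lower-left of $\sigma_v$ and on the diagonals of $\tau_v$. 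The cross terms $c^2 x$ and $cy$ vanish modulo $2^n$ precisely because $n \geq 3$ and $\rho_n$ is ramified mod 4, and the scalar on $\tau_v$ that $g_v^{\text{ram}}$ uses is the unique one (up to the choice normalized in the statement) for which the diagonal correction on $\tau_v$ matches what the conjugation produces.

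To finish, I would verify that the four classes are linearly independent: $f_v^{(1)}, f_v^{(3)}$ and $g_v^{\text{ram}}$ vanish on $\tau_v$ apart from the scalar-matrix part, while taking values in distinct one-dimensional subspaces of $M_2(\F_4)$ when evaluated on $\sigma_v$, and $f_v^{(2)}$ is the unique class with genuinely ramified off-diagonal contribution. The dimension count matches: since $v$ is a trivial prime the $G_v$-action on $Ad(\overline{\rho})$ is trivial, giving $\dim H^0(G_v, Ad(\overline{\rho})) = \dim H^2(G_v, Ad(\overline{\rho})) = 4$ by local duality, and the local Euler--Poincar\'{e} characteristic at a prime of residue characteristic $\neq 2$ forces $\dim H^1 = 8$. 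The smooth quotient of $R_v$ obtained by freely varying $x, y, z$ in $\pi_v$ together with the additional direction contributed by $g_v^{\text{ram}}$ has cotangent space of $\F_4$-dimension $\dim H^1 - \dim H^2 = 4$, so the span of the four cocycles exhausts $\mathcal{N}_v$. The main obstacle throughout is the careful modular bookkeeping in the $g_v^{\text{ram}}$ computation; everything else reduces to straightforward matrix manipulation or cohomological dimension counts already performed elsewhere in the paper.
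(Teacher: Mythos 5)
Your overall strategy --- rerunning the Proposition \ref{g^ram}/Corollary \ref{N_p^ram} computation at $v$ and checking the three easy classes by hand --- is sound, and in fact the paper simply cites Corollary \ref{N_p^ram} for that part. But the actual content of this lemma, which your proposal never carries out, is the determination of the scalar appearing in $g_v^{\text{ram}}(\tau_v)$. In general $g_p^{\text{ram}}$ sends $\tau_p \mapsto \tfrac{y_0}{p-1}I$, where $y_0 \in 2\W/4\W$ is the $\tau_p$-parameter of the fixed mod $4$ lift, so a priori the scalar is some element of $\F_4^\times$ depending on $\rho_2|_{G_v}$, not necessarily $1$. To obtain the specific matrix $\tau_v \mapsto I$ asserted in the statement you must: (i) normalize $f^{(v)}$ so that $f^{(v)}(\tau_v) = \bigl(\begin{smallmatrix} 0 & 1 \\ 0 & 0 \end{smallmatrix}\bigr)$ (a choice of the isomorphism $\Gal(K^{(v)}/K) \simeq \F_4(\psi^{-1})$, i.e.\ a rescaling of the cohomology class); (ii) compute $\rho_2|_{G_v}(\tau_v) = (I + 2f^{(v)}(\tau_v))\xi_2(\tau_v) = \bigl(\begin{smallmatrix} 1 & 2 \\ 0 & 1 \end{smallmatrix}\bigr)$, using that $\xi_2$ and $f^{(w)}$ are unramified at $v$, so $y_0 = 2$; and (iii) observe that $\tfrac{2}{v-1} \equiv 1 \bmod 2$ because $v \equiv 3 \bmod 4$. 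Your parenthetical ``up to the choice normalized in the statement'' gestures at this, but the statement contains no normalization --- the normalization is of $f^{(v)}$ --- and without (i)--(iii) the lemma as stated is not established; you would only get the span of $f_v^{(1)}, f_v^{(2)}, f_v^{(3)}$ and a class sending $\tau_v$ to $uI$ for some undetermined $u \in \F_4^\times$.

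Two smaller points. You claim the cross term $cy$ vanishes mod $2^n$ ``because $\rho_n$ is ramified mod 4''; that is backwards --- it is in the \emph{unramified} case that $4 \mid y$ kills $cy$, whereas in the ramified case $acy = 2^{n-1}a\tfrac{y_0}{v-1} \neq 0$, and it is precisely this diagonal contribution on $\tau_v$ that the scalar part of $g_v^{\text{ram}}(\tau_v)$ must absorb; this confusion is connected to the missing step above. Finally, your closing exhaustion argument (``cotangent space of dimension $\dim H^1 - \dim H^2 = 4$, so the four classes exhaust $\mathcal{N}_v$'') does not actually rule out a larger preserving subspace; the paper sidesteps this because $\mathcal{N}_v$ is by definition the span given in Corollary \ref{N_p^ram}, so no maximality claim is needed, and linear independence of the four cocycles (immediate since the residual action is trivial, so coboundaries vanish) already gives $\dim \mathcal{N}_v = 4 = \dim H^0(G_v, Ad(\overline{\rho}))$ as required for balancedness.
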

\begin{proof}
Since $\rho_2$ is ramified at $v$, the space $\mathcal{N}_v$ is defined as in Corollary \ref{N_p^ram}. We may normalize $f^{(v)}$ such that 
$$
f^{(v)}(\tau_v) =\begin{pmatrix}
    & 1 \\
    &
\end{pmatrix}.
$$
Then 
$$
\rho_2|_{G_v}: 
\sigma_v \mapsto
\begin{pmatrix}
  v  & 2x \\
  & 1
\end{pmatrix}, \quad 
\tau_v \mapsto
\begin{pmatrix}
    1 & 2 \\
    & 1
\end{pmatrix},
$$
where $x \in \F_4$ depends on $f^{(v)}(\sigma_v)$. It follows that 
the constant $y_0$ used to define $g_v^{\text{ram}}$ equals 2, and
$$
\frac{2}{v-1} \equiv 1 \mod 2. 
$$
\end{proof}

\begin{lemma} \label{N_w}
The space $\mathcal{N}_w$ is spanned by the cohomology classes
$$
f_w^{(1)}: 
\sigma_w \mapsto 
\begin{pmatrix}
0 & 0 \\
1 & 0
\end{pmatrix}
, \quad \tau_w \mapsto 
\begin{pmatrix}
0 & 0 \\
0 & 0
\end{pmatrix},
\quad 
f_w^{(2)}: 
\sigma_w \mapsto 
\begin{pmatrix}
0 & 0 \\
0 & 0
\end{pmatrix}
, \quad \tau_w \mapsto 
\begin{pmatrix}
0 & 0 \\
1 & 0
\end{pmatrix}
$$
and 
$$
f_w^{(3)}: 
\sigma_w \mapsto 
\begin{pmatrix}
1 & 0 \\
0 & 1
\end{pmatrix}
, \quad \tau_w \mapsto 
\begin{pmatrix}
0 & 0 \\
0 & 0
\end{pmatrix},
\quad 
g_w^{\text{\upshape{ram}}}: 
\sigma_w \mapsto 
\begin{pmatrix}
0 & 1 \\
0 & 0
\end{pmatrix}
, \quad \tau_w \mapsto 
\begin{pmatrix}
1 & 0 \\
0 & 1
\end{pmatrix}.
$$
We will also denote $g_w^{\text{\upshape{ram}}}$ by $f_w^{(4)}$.
\end{lemma}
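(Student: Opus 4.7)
The plan is to mirror the proof of Lemma \ref{N_v} essentially verbatim, swapping upper triangular structure for lower triangular. First, I would invoke Corollary \ref{corollary N_w}, which already provides the description of $\mathcal{N}_w$ for any trivial prime $w$ where the lift is ramified, in terms of a constant $y_0 \in 2\W/4\W$ appearing in the reduction of $\rho_2|_{G_w}$ mod $4$. The content of the lemma is therefore to check that for our specific $\rho_2$, the normalization works out so that the coefficient $\frac{y_0}{w-1}$ equals $1$ in $\F_4$, making $g_w^{\text{ram}} = f_w^{(4)}$ take the clean form stated.

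Next I would compute $\rho_2|_{G_w}$ explicitly. Since $w$ is a trivial prime (so $\psi|_{G_w}$ is trivial), $w \equiv 3 \bmod 4$, and $w$ remains prime in $\Q(\sqrt{l})$, the computation from the previous section already gave $\xi_2|_{G_w}(\sigma_w) = \mathrm{diag}(w^{-1},1)$ and $\xi_2|_{G_w}(\tau_w) = I$. Using $f^{(w)}$ normalized so that $f^{(w)}(\tau_w) = \bigl(\begin{smallmatrix} 0 & 0 \\ 1 & 0 \end{smallmatrix}\bigr)$ and the fact (proved in Section 6 via global reciprocity) that $f^{(v)}|_{G_w} = 0$, one obtains
$$
\rho_2|_{G_w}:
\sigma_w \mapsto
\begin{pmatrix}
w^{-1} & 0 \\
2x & 1
\end{pmatrix},
\qquad
\tau_w \mapsto
\begin{pmatrix}
1 & 0 \\
2 & 1
\end{pmatrix},
$$
for some $x \in \F_4$ determined by $f^{(w)}(\sigma_w)$.

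Reading off the constant defining $g_w^{\text{ram}}$ in Corollary \ref{corollary N_w}, we have $y_0 = 2$. Since $w \equiv 3 \bmod 4$ we get $w-1 \equiv 2 \bmod 4$, hence
$$
\frac{y_0}{w-1} = \frac{2}{w-1} \equiv 1 \pmod{2}
$$
in $\W$. Substituting this into the general form of $g_w^{\text{ram}}$ in Corollary \ref{corollary N_w} yields exactly the matrices displayed in the statement of the lemma. The spanning claim then follows directly from Corollary \ref{corollary N_w}.

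I do not anticipate a genuine obstacle here: the only nontrivial input beyond linear algebra is the vanishing $f^{(v)}|_{G_w}=0$ (already established) and the congruence $2/(w-1) \equiv 1 \bmod 2$, which is immediate from $w \equiv 3 \bmod 4$. The argument is a direct parallel of Lemma \ref{N_v}, with the roles of the two off-diagonal positions exchanged.
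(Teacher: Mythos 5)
Your proposal matches the paper's proof essentially verbatim: both invoke Corollary \ref{corollary N_w}, normalize $f^{(w)}$ so that $f^{(w)}(\tau_w)=\bigl(\begin{smallmatrix}0&0\\1&0\end{smallmatrix}\bigr)$, compute $\rho_2|_{G_w}$ to read off $y_0=2$, and use $w\equiv 3 \bmod 4$ to get $\frac{2}{w-1}\equiv 1 \bmod 2$. One small attribution slip: $f^{(v)}|_{G_w}=0$ holds because $w$ was chosen to split completely in $K^{(v)}$ (Lemma \ref{v,w}), not via global reciprocity (which is used for the reverse vanishing $f^{(w)}|_{G_v}=0$), but this does not affect the argument.
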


\begin{proof}
Since $\rho_2$ is ramified at $w$, the space $\mathcal{N}_w$ is defined as in Corollary \ref{corollary N_w}. We may normalize $f^{(w)}$ such that 
$$
f^{(w)}(\tau_w) =\begin{pmatrix}
    &  \\
   1 &
\end{pmatrix}.
$$
Then 
$$
\rho_2|_{G_w}: 
\sigma_v \mapsto
\begin{pmatrix}
  w^{-1}  &  \\
  2x & 1
\end{pmatrix}, \quad 
\tau_v \mapsto
\begin{pmatrix}
    1 &  \\
   2 & 1
\end{pmatrix},
$$
where $x \in \F_4$ depends on $f^{(w)}(\sigma_w)$. It follows that 
the constant $y_0$ used to define $g_w^{\text{ram}}$ equals 2, and 
$$
\frac{2}{v-1} \equiv 1 \mod 2. 
$$
\end{proof}

\begin{lemma}
Let $\chi_2$ be the quadratic character corresponding to $\mathbb{Q}(\sqrt{2})/\Q$, regarded as 
as additive character  $$\chi_2: G_S \to \F_2.$$
Let $a,b \in \F_4$ be constants. 
The homomorphism 
\begin{IEEEeqnarray*}{rCl} 
\chi_2
\begin{pmatrix}
 a & \\
  & b
\end{pmatrix} &:&
G_S \to 
\begin{pmatrix}
* & \\
& * 
\end{pmatrix} \\
g &\mapsto & 
\chi_2(g)
\begin{pmatrix}
 a & \\
  & b
\end{pmatrix}
\end{IEEEeqnarray*}
is cohomology class in $H^1(G_S, \begin{pmatrix}
* & \\
& * 
\end{pmatrix})$ whose restriction to $G_2$ lives in $\mathcal{N}_2$, and 
$$
\chi_2
\begin{pmatrix}
 a & \\
  & b
\end{pmatrix} \in H^1_{\mathcal{N}}(G_S, Ad(\overline{\rho}))
$$
if and only if $a=b.$
\end{lemma}
\begin{proof}
By definition, the character $\chi_2$ is the projection 
$G_\Q \to \Gal(\Q(\sqrt{2})/\Q)\simeq \F_2.$ Hence, $\chi_2$ factors through the projection $\Gal(\Q(\zeta_{2^\infty})/\Q) \to \Gal(\Q(\sqrt{2})/\Q),$
so we may regard $\chi_2$ as a character on $\Gal(\Q(\zeta_{2^\infty})/\Q).$ Let $c$ denote complex conjugation 
in $\Gal(\Q(\zeta_{2^\infty})/\Q).$ The subgroup $\langle c \rangle$ cuts out the totally real subfield 
$\Q(\zeta_{2^\infty})^+$, and 
$$\Gal(\Q(\zeta_{2^\infty})/\Q) \simeq \Gal(\Q(\zeta_{2^\infty})^+/\Q) \times \langle c \rangle
\simeq \Z_2 \times \Z/2\Z.
$$
Since $\Q(\sqrt{2})$ is totally real, 
$\chi_2(c)=0.$ Hence, $\chi_2$ vanishes on the torsion part 
$\langle c \rangle \simeq \Z/2\Z$. Since $\Q(\zeta_{2^\infty})/\Q$ is totally ramified at 2, 
the global Galois group $\Gal(\Q(\zeta_{2^\infty})/\Q)$ is equal to the decomposition group at 2, which is isomorphic to
$\Gal(\Q_2(\zeta_{2^\infty})/\Q_2)$. Recall that  
$$H^1(G_2, 
\begin{pmatrix}
* & \\
& * 
\end{pmatrix}
) = \text{Hom}( \Gal(\Q_2^\text{ab}/\Q_2), \F_4^2 ).$$ 
By Lemma \ref{N_2}, the homomorphisms in this space that live in $\mathcal{N}_2$ are precisely those whose restriction to 
$\Gal(\Q_2(\zeta_{2^\infty})/\Q_2)$ vanish on the torsion part. Regarding 
$\chi_2$ as an additive character with values in $\F_2$, we conclude that for any $a,b \in \F_4,$ there is a global cohomology class
\begin{IEEEeqnarray*}{rCl} 
\chi_2
\begin{pmatrix}
 a & \\
  & b
\end{pmatrix} &:&
G_S \to 
\begin{pmatrix}
* & \\
& * 
\end{pmatrix} \\
g &\mapsto & 
\chi_2(g)
\begin{pmatrix}
 a & \\
  & b
\end{pmatrix}
\end{IEEEeqnarray*}
in $H^1(G_S, \begin{pmatrix}
* & \\
& * 
\end{pmatrix}) = \operatorname{Hom}(G_S, 
\begin{pmatrix}
* & \\
& * 
\end{pmatrix}
)$ whose restriction to $G_2$ lives in $\mathcal{N}_2$. We will show that 
$$
\chi_2
\begin{pmatrix}
 a & \\
  & b
\end{pmatrix} \in H^1_{\mathcal{N}}(G_S, Ad(\overline{\rho}))
$$ precisely when $a=b.$ We notice that
$$
\chi_2
\begin{pmatrix}
 a & \\
  & b
\end{pmatrix}|_{G_v}:
\sigma_v \mapsto 
\chi_2(\sigma_v) \begin{pmatrix}
 a & \\
  & b
\end{pmatrix},
\quad 
\tau_v \mapsto 0
$$
equals $ a \chi_2(\sigma_v) f_v^{(3)} \in \mathcal{N}_v$ if $a=b$ and is not in $\mathcal{N}_v$ if $a\neq b.$ The homomorphism 
$$
\chi_2
\begin{pmatrix}
 a & \\
  & a
\end{pmatrix}
$$ is clearly also in $\mathcal{N}_w$ when restricted to $G_w$, and hence defines an element in the Selmer group. 
\end{proof}

\begin{lemma} \label{chi_v}
Let $\chi_v$ denote the quadratic character corresponding to $\mathbb{Q}(\sqrt{v})/\Q.$ For any 
$a, b\in \F_4$, where $(a,b)\neq (0,0)$, the homomorphism 
$$
\chi_v
\begin{pmatrix}
  a &  \\
  & b
\end{pmatrix}
$$
is not in $\mathcal{N}_v$ when restricted to $G_v$. In particular, $\chi_v$
can never give rise to an element in the Selmer group $H^1_{\mathcal{N}}(G_S, Ad(\overline{\rho}))$. 
\end{lemma}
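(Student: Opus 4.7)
The plan is to verify the local statement by a direct comparison of cocycles at the tame generators $\sigma_v, \tau_v$ of $G_v$, using the explicit basis of $\mathcal{N}_v$ furnished by Lemma \ref{N_v}. The global ``in particular'' clause will then follow at once, since any class in $H^1_{\mathcal{N}}(G_S, Ad(\overline{\rho}))$ must restrict at $v$ to an element of $\mathcal{N}_v$.

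Before setting up the comparison, I would record two simplifications. First, because $v$ is a trivial prime, $v$ splits completely in $K = \Q(\overline{\rho})$, so $\overline{\rho}|_{G_v}$ is trivial; the $G_v$-action on $Ad(\overline{\rho})$ is therefore trivial and every $1$-coboundary at $v$ vanishes. Hence cohomology classes in $H^1(G_v, Ad(\overline{\rho}))$ are represented by unique cocycles, and ``membership in $\mathcal{N}_v$'' is an honest equality of functions on $G_v$. Second, since $v$ is an odd prime ramifying in $\Q(\sqrt{v})$, one has $\chi_v(\tau_v) = 1$, while $\chi_v(\sigma_v) \in \F_2$ depends on the choice of lift of Frobenius but only affects the diagonal of the cocycle $\chi_v \begin{pmatrix} a & \\ & b \end{pmatrix}$.

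The heart of the argument is then a short linear-algebra check. Assuming, toward contradiction, a relation $\chi_v \begin{pmatrix} a & \\ & b \end{pmatrix}|_{G_v} = \sum_{i=1}^{4} c_i f_v^{(i)}$ with $c_i \in \F_4$ and $(a,b) \neq (0,0)$, evaluation at $\tau_v$ receives contributions only from $f_v^{(2)}$ and $f_v^{(4)}$ and produces
$$
\begin{pmatrix} a & 0 \\ 0 & b \end{pmatrix} = \begin{pmatrix} c_4 & c_2 \\ 0 & c_4 \end{pmatrix},
$$
which forces $a = b = c_4$ and $c_2 = 0$. Evaluation at $\sigma_v$ then yields the rigid off-diagonal constraint $c_4 = 0$ from the lower-left entry of $\sum_i c_i f_v^{(i)}(\sigma_v)$, independently of the value of $\chi_v(\sigma_v)$. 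Together these equations force $a = b = 0$, contradicting the hypothesis $(a,b) \neq (0,0)$. I do not anticipate a real obstacle: once Lemma \ref{N_v} is in hand, this is a mechanical matrix computation, and the only mild subtlety, the ambiguity in $\chi_v(\sigma_v)$, is harmless because the lower-left entry of the right-hand side at $\sigma_v$ is purely off-diagonal.
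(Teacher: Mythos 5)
Your proposal is correct and follows essentially the same route as the paper: expand the putative relation against the explicit basis $f_v^{(1)},\dots,f_v^{(4)}$ of $\mathcal{N}_v$ from Lemma \ref{N_v} and compare values at $\sigma_v$ and $\tau_v$ to force $a=b=0$. The only (harmless) difference is cosmetic: the paper fixes the convention $\chi_v(\sigma_v)=0$ and works from $\sigma_v$ first, whereas you evaluate at $\tau_v$ first and observe the argument is independent of $\chi_v(\sigma_v)$ because the left-hand side at $\sigma_v$ is diagonal.
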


\begin{proof}
It suffices to show that 
$$
\chi_v
\begin{pmatrix}
  a &  \\
  & b
\end{pmatrix}
$$
is not in $\mathcal{N}_v$ when restricted to $G_v$ for any $a,b \in \F_4$ not both equal to zero.
By Lemma \ref{N_v}, the space $\mathcal{N}_v$ is spanned by four cohomology classes $f_v^{(1)},f_v^{(2)},f_v^{(3)}$ and $g_v^\text{ram}$. Let $f_v^{(4)}$ denote $g_v^\text{ram}$. Suppose
$$
\chi_v
\begin{pmatrix}
  a &  \\
  & b
\end{pmatrix}|_{G_v} =\sum_{i=1}^4 \lambda_i f_v^{(i)}
$$
Then 
$$
\chi_v (\sigma_v)
\begin{pmatrix}
  a &  \\
  & b
\end{pmatrix} =
\begin{pmatrix}
  \lambda_3 & \lambda_1  \\
  \lambda_4 & \lambda_3
\end{pmatrix}.
$$
Since $\chi_v (\sigma_v)=0,$ it follows that $\lambda_1, \lambda_3, \lambda_4=0.$ 
Next, 
$$
\chi_v (\tau_v)
\begin{pmatrix}
  a &  \\
  & b
\end{pmatrix} =
\begin{pmatrix}
  \lambda_4  & \lambda_2  \\
   & \lambda_4 
\end{pmatrix}.
$$
It follows that also $\lambda_2=0$. We may therefore conclude that 
$$
\chi_v
\begin{pmatrix}
  a &  \\
  & b
\end{pmatrix}|_{G_v} \notin \mathcal{N}_v.
$$
\end{proof}

\begin{lemma} \label{chi_w}
Let $\chi_w$ denote the quadratic character corresponding to $\mathbb{Q}(\sqrt{w})/\Q$. For any 
$a, b\in \F_4$, $(a,b)\neq (0,0)$, the homomorphism 
$$
\chi_w
\begin{pmatrix}
  a &  \\
  & b
\end{pmatrix}
$$
is not in $\mathcal{N}_w$ when restricted to $G_w$. In particular, $\chi_w$
can never give rise to an element in the Selmer group $H^1_{\mathcal{N}}(G_S, Ad(\overline{\rho}))$. 
\end{lemma}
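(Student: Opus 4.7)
The plan is to mimic the proof of Lemma \ref{chi_v} almost verbatim, swapping $v$ for $w$ and using the basis of $\mathcal{N}_w$ provided by Lemma \ref{N_w}. First I would record the local behavior of $\chi_w$: since $w$ is odd and totally ramified in $\Q(\sqrt{w})$, the character $\chi_w$ restricted to $G_w$ factors through tame inertia and satisfies $\chi_w(\sigma_w)=0$ while $\chi_w(\tau_w)=1$. This is the only input needed from class field theory.

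Next I would assume toward a contradiction that
$$
\chi_w\begin{pmatrix} a & \\ & b\end{pmatrix}\bigg|_{G_w}=\sum_{i=1}^{4}\lambda_i f_w^{(i)}
$$
for some $\lambda_i\in\F_4$, and then compare the two sides on $\sigma_w$ and $\tau_w$. Evaluating on $\sigma_w$ the left-hand side is the zero matrix, while the right-hand side is the matrix assembled from $f_w^{(1)}(\sigma_w)$, $f_w^{(3)}(\sigma_w)=I$ and $f_w^{(4)}(\sigma_w)=\left(\begin{smallmatrix}0 & 1\\ 0 & 0\end{smallmatrix}\right)$; reading off entries forces $\lambda_1=\lambda_3=\lambda_4=0$. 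Evaluating on $\tau_w$ the left-hand side is $\operatorname{diag}(a,b)$ while the right-hand side reduces to $\lambda_2 f_w^{(2)}(\tau_w)=\lambda_2\left(\begin{smallmatrix}0 & 0\\ 1 & 0\end{smallmatrix}\right)$, which has zero diagonal entries. Hence $a=b=0$, contradicting $(a,b)\neq(0,0)$, and so $\chi_w\operatorname{diag}(a,b)|_{G_w}\notin\mathcal{N}_w$.

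The only point to be careful about is that the basis of $\mathcal{N}_w$ is made up of lower-triangular cocycles (because $\rho_2$ is lower triangular at $w$), whereas the basis of $\mathcal{N}_v$ used in Lemma \ref{chi_v} is upper triangular; this swaps the off-diagonal positions in the bookkeeping but does not change the logic. No genuine obstacle is expected: the computation is entirely mechanical once Lemma \ref{N_w} is in hand.

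For the \textbf{in particular} assertion, I would invoke the definition of the Selmer group: any class in $H^1_{\mathcal{N}}(G_S,Ad(\overline{\rho}))$ must lie in $\mathcal{N}_q$ after restriction to $G_q$ for every $q\in S$, in particular for $q=w$. The argument above shows that this local condition at $w$ rules out every nontrivial scalar diagonal twist of $\chi_w$, so no such global class arises from $\chi_w$.
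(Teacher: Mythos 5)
Your proposal is correct and is essentially the paper's own argument: the paper proves this lemma simply by saying it is analogous to Lemma \ref{chi_v}, and you carry out exactly that analogous computation, comparing values on $\sigma_w$ and $\tau_w$ against the basis $f_w^{(1)},\dots,f_w^{(4)}$ of $\mathcal{N}_w$ from Lemma \ref{N_w} (legitimate since $\overline{\rho}|_{G_w}$ is trivial, so classes are homomorphisms) to force $\lambda_4=0$ and then $a=b=0$. The only cosmetic remark is that the value $\chi_w(\sigma_w)=0$ is a normalization of the Frobenius lift (the one the paper also uses for $\chi_l$ and $\chi_v$), and your argument would go through even without it, since the contradiction comes from the $\tau_w$ evaluation.
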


\begin{proof} The proof is similar to the proof of Lemma \ref{chi_v} 
\end{proof}

\begin{lemma}
The cohomology class $f^{(v)}$ is in $H^1_{\mathcal{N}}(G_S, Ad(\overline{\rho}))$.
\end{lemma}

\begin{proof} We refer to Proposition \ref{globalclasses} for the definition of $f^{(v)}$. Since $K^{(v)}$ is totally real, $f^{(v)}|_{G_\infty} \in \mathcal{N}_\infty.$ 
By Lemma \ref{N_7},
$\mathcal{N}_7 =H^1_{\operatorname{unr}}(G_7, Ad(\overline{\rho})),$
and by Lemma \ref{N_l},
$\mathcal{N}_l =H^1_{\text{unr}}(G_l, Ad(\overline{\rho})).$
Since $K^{(v)}$ is unramified at primes above 7 and $l$, it follows that $f^{(v)}|_{G_7} \in \mathcal{N}_7$ and
$f^{(v)}|_{G_l} \in \mathcal{N}_l$. 
Since $f^{(v)} \in H^1(G_S, 
\begin{pmatrix}
    & *\\
    &
\end{pmatrix}
),$ and since 
$
H^1(G_v, 
\begin{pmatrix}
    & *\\
    &
\end{pmatrix}
)
\subseteq 
\mathcal{N}_v
$ by Lemma \ref{N_v}, it follows that $f^{(v)}|_{G_v} \in \mathcal{N}_v$. 
By Lemma \ref{N_2}, $f^{(v)}|_{G_2} \in \mathcal{N}_2$. Since $f^{(v)}|_{G_w}=0,$ we conclude that $f^{(v)} \in H^1_{\mathcal{N}}(G_S, Ad(\overline{\rho}))$. 
\end{proof}

\begin{lemma} \label{f^w}
The cohomology class $f^{(w)} \not \in H^1_{\mathcal{N}}(G_S, Ad(\overline{\rho}))$.
\end{lemma}
\begin{proof}
First, note that 
    $$
f^{(w)}|_{G_2} \in H^1(G_2,
\begin{pmatrix}
    & \\
   * &
\end{pmatrix}
).
$$
By Lemma \ref{N_2}, 
$$
\mathcal{N}_2 \cap 
H^1(G_2,
\begin{pmatrix}
    & \\
   * &
\end{pmatrix}
) = 0. 
$$
Since any Frobenius automorphism at 2 in $K^{(w)}$ has order at least 3, 
$
f^{(w)}|_{G_2} \neq 0.
    $ 
We conclude that 
    $
f^{(w)}|_{G_2} \not \in \mathcal{N}_2
    $
    and hence $f^{(w)} \not \in H^1_{\mathcal{N}}(G_S, Ad(\overline{\rho}))$.
\end{proof}

We recall a formula from class field theory that computes the rank of the maximal $p$-elementary abelian extension of a number field $F$ unramified outside a finite set of primes $T$ in $F$ (see \cite{koch} Theorem 11.8, p. 120). For any field $E$, let $\delta(E)$ be 1 if $E$ contains the $p$-th roots of unity and 0 otherwise. Let $\chi(\mathfrak{p})$ denote the characteristic of the residue field associated to the prime $\mathfrak{p}$ in $F$. Let $r$ denote the number of infinite places in $F$. Let $G_T$ be the Galois group of the maximal extension of $F$ unramified outside $T$.
Finally, let 
$$
V_T = \{ a \in F^\times| (a) = \mathfrak{a}^p \text{ for some fractional ideal $\mathfrak{a}$, and } \forall \mathfrak{p} \in T: a \in F_\mathfrak{p}^p  \},
$$
where $(a)$ denotes the fractional ideal generated by $a$. Then 
\begin{IEEEeqnarray*}{ll}
 & \dim_{\F_p} H^1(G_T, \Z / p\Z) \\
= & \sum_{\mathfrak{p}\in T: \chi(\mathfrak{p}) = p} (F_\mathfrak{p}: \Q_p) - \delta(F) - r + 1 
+ \sum_{\mathfrak{p}\in T} \delta(F_\mathfrak{p})
+ \dim_{\F_p} (V_{T}/F^{\times p})^*,
\end{IEEEeqnarray*}
where $(V_{T}/F^{\times p})^* = \text{Hom}(V_{T}/F^{\times p}, \F_p)$.
Earlier, we used a tame version of this formula from \cite{gras} to construct the fields $K^{(v)}$ and $K^{(w)}$.

\begin{lemma} \label{rayclass}
Let $S_0$ denote the primes in $K$ above 2, $v$ and $w$.
Let $M$ denote the maximal 2-elementary abelian extension of $K$ unramified outside 2, $v$ and $w$. Then $\Gal(M/K)$ is a vector space over $\F_2$ whose dimension equals
$$
7 + \dim_{\F_2} (V_{S_0}/K^{\times 2})^*.
$$
The prime $v=167$ has the following properties:
\begin{enumerate}
    \item $v$ is trivial
    \item if $\mathfrak{v}$ divides $v$ in $K$, then the Frobenius automorphism 
$\sigma_\mathfrak{v} = (1,0) \in \F_2 \oplus U_2$, where $\F_2 \oplus U_2 = \Gal(K(\sqrt{U_K})/K)$ as a $\Gal(K/\Q)$-module.
\item $v$ remains prime in $\Q(\sqrt{l})$.
\end{enumerate}
The prime $w=379$ has the following properties:
\begin{enumerate}
    \item $w$ is trivial
    \item if $\mathfrak{w}$ divides $w$ in $K$, then $\sigma_\mathfrak{w} = (1,0) \in \F_2 \oplus U_2$.
\item $w$ remains prime in $\Q(\sqrt{l})$
\item $w$ splits completely in $K^{(167)}$.
\end{enumerate}
If $S_0$ denotes the primes in $K$ above 2, 167 and 379, then 
$$
 (V_{S_0}/K^{\times 2})^* =0.
$$
\end{lemma}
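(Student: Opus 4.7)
\emph{Plan of proof.} I address the four claims in turn.

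First, the dimension formula follows from the class field theory identity recalled immediately above, applied with $F=K$, $p=2$, and $T=S_0$. I note that $2$ is inert in $K$ (because $\widetilde{\psi}|_{G_2}$ has order $3$), so the unique prime of $K$ above $2$ has local degree $3$; since $167$ and $379$ each split completely in $K$ (verified in the next paragraph), $|S_0|=1+3+3=7$. Because $p=2$, every local and global field contains $-1$, so $\delta(K)=1$ and $\delta(K_\mathfrak{p})=1$ for every $\mathfrak{p}\in S_0$. With $r=3$, the formula collapses to
\[
\dim_{\F_2}\Gal(M/K)=3-1-3+1+7+\dim_{\F_2}(V_{S_0}/K^{\times 2})^*=7+\dim_{\F_2}(V_{S_0}/K^{\times 2})^*.
\]

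Second, I verify the congruence data for $v=167$ and $w=379$ directly. Recalling $\Gal(K/\Q)\simeq(\Z/7)^\times/\{\pm 1\}$, a rational prime $p\neq 7$ splits completely in $K$ iff $p\equiv\pm 1\pmod 7$. Direct checks give $167\equiv 3\pmod 4$ and $167\equiv -1\pmod 7$, together with $\left(\frac{17}{167}\right)=\left(\frac{167}{17}\right)=\left(\frac{-3}{17}\right)=-1$; likewise $379\equiv 3\pmod 4$, $379\equiv 1\pmod 7$, and $\left(\frac{17}{379}\right)=\left(\frac{5}{17}\right)=-1$. For the Frobenius condition $\sigma_{\mathfrak{p}}=(1,0)\in\F_2\oplus U_2$, the Kummer pairing identifies $\sigma_\mathfrak{p}$ with the pattern of squareness modulo $\mathfrak{p}$ of a fixed system of fundamental units of $K$; I will carry out this finite evaluation to read off the asserted equality. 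The complete splitting of $w$ in $K^{(167)}$ is verified analogously, using the Kummer generator of $K^{(167)}/K$ produced in Lemma \ref{v,w}.

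Third, the main work is to show $(V_{S_0}/K^{\times 2})^*=0$, equivalently $V_{S_0}\subseteq K^{\times 2}$. Since $\cl_K=1$, every $x\in V_{S_0}$ can be written $x=u\pi^2$ with $u\in U_K$ and $\pi\in K^\times$, so it suffices to prove that the localization map
\[
U_K/U_K^2\longrightarrow\bigoplus_{\mathfrak{p}\in S_0}K_\mathfrak{p}^\times/K_\mathfrak{p}^{\times 2}
\]
is injective. Here $U_K/U_K^2$ is three-dimensional over $\F_2$, generated by $-1$ and two fundamental units $\varepsilon_1,\varepsilon_2$. The plan is to express each of these three classes in the local squareclass groups at the seven primes of $S_0$, producing a $3\times N$ matrix over $\F_2$ (with $N=5+6+6=17$, from $\dim_{\F_2}K_\mathfrak{p}^\times/K_\mathfrak{p}^{\times 2}$ at the inert prime above $2$ and at the six split primes above $167$ and $379$), and to show its rank is three. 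The main obstacle is precisely this step: it is not a structural cohomological vanishing but a concrete finite computation, which requires an explicit representative of $K$ and fundamental units to sufficient 2-adic precision, best executed in a computer algebra system.
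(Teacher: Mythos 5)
Your proposal is correct in structure and, for the dimension formula, is essentially the paper's argument made explicit: the paper simply says the count "follows immediately from the formula above," and your bookkeeping ($|S_0|=7$ with the prime above $2$ of local degree $3$, $\delta\equiv 1$ everywhere since $p=2$, $r=3$) is the right instantiation. Where you genuinely diverge is the final vanishing statement. The paper does not verify injectivity of the unit localization map; instead it runs a ray class computation in Magma over $K$ showing that the maximal $2$-elementary abelian extension of $K$ unramified outside $S_0$ is exactly $7$-dimensional, and then the identity $\dim\Gal(M/K)=7+\dim_{\F_2}(V_{S_0}/K^{\times 2})^*$ forces $(V_{S_0}/K^{\times 2})^*=0$. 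Your route reduces the vanishing (using $\cl_K=1$ and $U_K\cap K^{\times 2}=U_K^2$) to injectivity of $U_K/U_K^2\to\bigoplus_{\mathfrak{p}\in S_0}K_\mathfrak{p}^\times/K_\mathfrak{p}^{\times 2}$, to be checked on $-1$ and two fundamental units; this is logically equivalent and has the virtue of exhibiting exactly what the machine must verify, while the paper's ray-class computation is a single black-box call that yields the $7$-dimensionality directly. Both your verification of the conditions on $v=167$ and $w=379$ (your congruence and residue-symbol checks are correct as far as you carry them; the Frobenius condition $\sigma_\mathfrak{p}=(1,0)$ and the splitting of $379$ in $K^{(167)}$ are left as a finite Kummer-theoretic evaluation) and the paper's (a Magma search, plus a Chebotarev remark on infinitude of admissible pairs $(v,w)$ that your write-up omits but the lemma does not strictly require) rest on unperformed or machine computation, so the two proofs are at a comparable level of completeness; just be aware that your unit-based check still requires identifying the $\F_2$-component of $\F_2\oplus U_2$ with the class of $-1$ (equivalently with $K(\sqrt{-1})/K$, as the paper notes) before reading off $(1,0)$ from squareness of units modulo $\mathfrak{p}$.
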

\begin{proof}
The computation of the dimension of the maximal 2-elementary abelian extension of $K$ unramified outside $S_0$ follows immediately from the formula above. The primes $v=167$ and $w=379$ have been found by a search in Magma. The set of primes satisfying the properties listed for $v$ is clearly a Chebotarev set, so there are infinitely many choices for $v$. 
Given a fixed choice of $v$, the set of primes satisfying the properties listed for $w$ is also  a Chebotarev set. 
This shows that there are infinitely many pairs $(v,w)$ satisfying the listed properties. 
The prime $v=167$ is the first one with all the properties required of $v$. 
Having chosen $v=167$, the prime $w=379$ is the first prime one can choose with all the properties required of $w$.

If $S_0$ denotes the primes in $K$ above 2, 167 and 379, a ray class computation in Magma above the field $K$ shows that he maximal 2-elementary abelian extension of $K$ unramified outside $S_0$ is 7-dimensional as a vector space over $\F_2$, which implies that
$
 (V_{S_0}/K^{\times 2})^* =0.
$
\end{proof}
From now on, we assume that $v$ and $w$ have been chosen so that
$$(V_{S_0}/K^{\times 2})^* =0.$$

\begin{proposition}  \label{Selmer_S}
Let $S= \{\infty, 2, 7, l, v, w \}.$ 
The Selmer group $H^1_{\mathcal{N}}(G_S, Ad(\overline{\rho}))$ has rank 2, i.e.
$$
\dim_{\F_4} H^1_{\mathcal{N}}(G_S, Ad(\overline{\rho})) = 2,  
$$ 
and equals the span of the cohomology classes $f^{(v)}$ and  
$
\chi_2 I
$.
\end{proposition}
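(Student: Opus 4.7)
The plan is to establish matching lower and upper bounds on $\dim_{\F_4} H^1_\mathcal{N}(G_S, Ad(\overline{\rho}))$.

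For the lower bound, the three classes $f^{(v)}$, $f^{(w)}$, and $\chi_2 I$ all lie in the Selmer group by the preceding lemmas, and they are linearly independent because they are supported in three distinct $\Gal(K/\Q)$-isotypic summands of the decomposition $Ad(\overline{\rho}) = \F_4(\psi^{-1}) \oplus \F_4 \oplus \F_4 \oplus \F_4(\psi)$: namely $f^{(v)}$ in the upper-right summand $\F_4(\psi^{-1})$, $f^{(w)}$ in the lower-left summand $\F_4(\psi)$, and $\chi_2 I$ in the diagonal $\F_4 \oplus \F_4$. Any nontrivial relation would project nontrivially to at least one summand and yield a contradiction.

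For the upper bound, I first use that $|\Gal(K/\Q)|=3$ is coprime to $2$, so inflation--restriction makes the restriction map to $G_K$ injective on $H^1(G_S, Ad(\overline{\rho}))$, landing inside $\mathrm{Hom}_{\Gal(K/\Q)}(\Gal(M/K), Ad(\overline{\rho}))$ where $M$ is the maximal totally real $2$-elementary abelian extension of $K$ unramified outside $S_0$ (the unramified conditions at $7$ and $l$ from Lemmas \ref{N_7} and \ref{N_l} remove these from the ramification set, and $\mathcal{N}_\infty=0$ gives the totally real condition). I then decompose $f|_K = f_1 + f_2 + f_3 + f_4$ along the four isotypic summands and treat each piece.

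The $\Gal(K/\Q)$-invariant diagonal components $f_2, f_3$ descend to $\F_4$-valued quadratic characters of $\Q$ unramified outside $\{2,v,w\}$ and totally real. The torsion-freeness at $2$ inherited from $\mathcal{N}_2^{(3)}$ and $\mathcal{N}_2^{(4)}$, together with quadratic reciprocity using $v \equiv -1 \bmod 8$ and $w \equiv 3 \bmod 8$, cuts this space down to $\F_4 \chi_2 \oplus \F_4 \chi_{vw}$; and the diagonal-scalar coupling inside $\mathcal{N}_v$ and $\mathcal{N}_w$ then forces $f_2 = f_3$. For the off-diagonal piece $f_1$, a class-field-theoretic computation on the reduced prime set $\{$primes above $2, v\}$, analogous to the Magma verification of $(V_{S_0}/K^{\times 2})^* = 0$ used earlier, shows the ambient cohomology space of $\Gal(K/\Q)$-equivariant homomorphisms into $\F_4(\psi^{-1})$ is $1$-dimensional over $\F_4$ and generated by $f^{(v)}$; symmetrically, $f_4 \in \F_4 f^{(w)}$.

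To eliminate the $\chi_{vw}$-coefficient of $f_2 = f_3 = \alpha\chi_2 + \beta\chi_{vw}$, I use the coupling at $v$ inside $\mathcal{N}_v$ encoded by $g_v^{\mathrm{ram}}$: the local relation $f_4(\sigma_v) = f_2(\tau_v) = \beta$ (as $\chi_2$ is unramified at $v$ and $\chi_{vw}(\tau_v) = 1$), combined with $f_4 \in \F_4 f^{(w)}$ and $f^{(w)}|_{G_v} = 0$ from Lemma \ref{globalreciprocity} (valid because $w$ was chosen so that $w$ splits completely in $K^{(v)}$), forces $\beta = 0$. Assembling the pieces then gives $f \in \mathrm{span}_{\F_4}\{f^{(v)}, f^{(w)}, \chi_2 I\}$. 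The delicate step will be the class-field-theoretic bounds on $f_1$ and $f_4$: one must verify the vanishing of an analogue of $(V_{S_0}/K^{\times 2})^*$ for each of the reduced prime sets $\{$primes above $2, v\}$ and $\{$primes above $2, w\}$, a computation of the same nature as the one already used to single out $f^{(v)}$ and $f^{(w)}$.
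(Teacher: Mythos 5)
Your proposal is correct in substance and follows the same overall strategy as the paper: restrict to $K$ (using that $[K:\Q]=3$ is odd), observe that the Selmer conditions at $7$, $l$ and $\infty$ confine $f|_K$ to a $2$-elementary abelian, totally real extension unramified outside $\{2,v,w\}$, decompose into $\Gal(K/\Q)$-isotypic pieces, and identify each piece using the local conditions $\mathcal{N}_2$, $\mathcal{N}_v$, $\mathcal{N}_w$. Where you differ is in the class-field bookkeeping. The paper makes a single computation: the maximal such extension $M$ has $\Gal(M/K)$ of $\F_2$-dimension $7$ (via the Koch formula and the Magma verification that $(V_{S_0}/K^{\times 2})^*=0$), hence $M$ is exactly the compositum of $K(\sqrt{2}),K(\sqrt{v}),K(\sqrt{w}),K^{(v)},K^{(w)}$, so each off-diagonal isotypic piece is forced to be a multiple of $f^{(v)}$ or $f^{(w)}$ and the diagonal piece lies in the span of the $\chi_p\,\mathrm{diag}(a_p,b_p)$, which the earlier lemmas then reduce to $\F_4\,\chi_2 I$. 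You instead propose separate reduced-set computations (primes above $\{2,v\}$ and $\{2,w\}$) for the off-diagonal pieces, and you handle the diagonal piece by descending to real quadratic characters of $\Q$, imposing the torsion-vanishing at $2$ from Lemmas \ref{N_2^3} and \ref{N_2^4}, and then using the $g_v^{\mathrm{ram}}$/$g_w^{\mathrm{ram}}$ coupling together with $f^{(w)}|_{G_v}=0$ (Lemma \ref{globalreciprocity}) to kill the $\chi_{vw}$-direction and equate the two diagonal coordinates. That diagonal analysis is in fact a more explicit (and correct) version of the paper's ``easy to see'' step; note that, since $v=167\equiv -1 \bmod 8$ gives $\chi_2(\sigma_v)=0$, it is the condition at $w$ that actually forces the scalar shape, which your phrasing covers.

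Two points need tightening, both fixable. First, using the reduced ramification set $\{$primes above $2,v\}$ for the piece $f_1\in H^1(G_S,\F_4(\psi^{-1}))$ presupposes that $f_1$ is unramified at primes above $w$; this is true because every element of $\mathcal{N}_w$ (Lemma \ref{N_w}) has vanishing upper-right component on $\tau_w$, and symmetrically every element of $\mathcal{N}_v$ has vanishing lower-left component on $\tau_v$, but you should say so, since it is exactly what licenses dropping $w$ (resp.\ $v$) from the ramification set. Second, the ``delicate step'' you defer -- the vanishing of the analogue of $(V/K^{\times 2})^*$ for the reduced sets -- does not require any new computation: it follows from the $7$-dimensionality already established for the full set. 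Indeed, the reduced-set field is the subfield of $M$ fixed by the inertia subgroups above $w$; since $\Gal(M/K)\cong (\Z/2)^3\oplus U_2^{(v)}\oplus U_2^{(w)}$ and that inertia generates exactly $\Gal(K(\sqrt{w})/K)\oplus U_2^{(w)}$, the quotient has $U_2$-isotypic part equal to $U_2^{(v)}$ alone, so the space of $\Gal(K/\Q)$-equivariant homomorphisms into $\F_4(\psi^{-1})$ is one-dimensional over $\F_4$, generated by $f^{(v)}$, as you need (and likewise at $w$). With those two repairs your argument closes, and the lower bound via the disjoint isotypic supports of $f^{(v)}$, $f^{(w)}$, $\chi_2 I$ is fine.
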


\begin{proof} 
Suppose $f \in H^1_{\mathcal{N}}(G_S, Ad(\overline{\rho}))$ and that $f\neq 0$. We will show that 
$f$ is in the linear span of $f^{(v)}$ and $\chi_2 I$.
The cohomology class $f$ cuts out a nontrivial extension $K_f$ of $K$ (the fixed field of $\ker f|_K$) which is contained in the maximal 2-elementary abelian extension $M$ of $K$ unramified outside $2, v$ and $w$, and
$$
\Gal(K_f/K) \simeq 
\text{Im}(f|_K).
$$
The fields 
$K(\sqrt{2}),$ $K(\sqrt{v}),$ $K(\sqrt{w}),$ $K^{(v)}$ and $K^{(w)}$ are all contained in $M$, and 
each field is linearly disjoint over $K$ with the compositum of the others.
Since $\Gal(M/K)$ is 7-dimensional, it follows that 
$M$ equals the compositum of these fields,
and that $\Gal(M/K)$ is isomorphic to
$$ 
\Gal(K(\sqrt{2})/K) \oplus
\Gal(K(\sqrt{v})/K) \oplus
\Gal(K(\sqrt{w})/K) \oplus
\Gal(K^{(v)}/K) \oplus
\Gal(K^{(w)}/K) ,
$$
which is  $(\Z/2)^3\oplus U_2^2$ as a $\F_2[\Gal(K/\mathbb{Q})]$-module.
Now, $\Gal(K_f/K)$ is a quotient of this direct sum whose Jordan-H\"{o}lder sequence contains a subset of the irreducible $\F_2[\Gal(K/\mathbb{Q})]$-modules occurring in the decomposition of $\Gal(M/K)$. 
Furthermore, since the order of $\Gal(K/\mathbb{Q})$ is prime to 2, $\Gal(K_f/K)$
 decomposes into a direct sum
$$
(\Z/2)^m\oplus U_2^n
$$
with $0 \leq m \leq 3$ and $0 \leq n \leq 2$.
Write
$$
f = f^{(0)} + f^{(1)} + f^{(2)}
$$
where
$$
f^{(0)} \in
H^1(G_S, 
\begin{pmatrix}
 & \\
 * & 
\end{pmatrix}
)
=
H^1(G_S, \F_4(\psi)), 
$$
$$
f^{(1)} \in
H^1(G_S, 
\begin{pmatrix}
 * & \\
 & *
\end{pmatrix}
)
=
H^1(G_S, \F_4^2), 
$$
and 
$$
f^{(2)} \in
H^1(G_S, 
\begin{pmatrix}
  & * \\
 & 
\end{pmatrix}
)
=
H^1(G_S, \F_4(\psi^{-1})).
$$
Suppose $f^{(0)} \neq 0.$ Then $$\text{Im}(f^{(0)}|_K) \simeq \F_4(\psi)$$ as  $\F_4[\Gal(K/\mathbb{Q})]$-modules.
On the other hand, 
$\text{Im}(f^{(0)}|_K)$ is a $\F_4[\Gal(K/\mathbb{Q})]$-submodule of $\text{Im}(f|_K)$, so we must have
$$\text{Im}(f^{(0)}) \simeq \Gal(K^{(w)}/K)$$
as $\F_4[\Gal(K/\mathbb{Q})]$-modules. 
This implies that $f_0 = c_0 f^{(w)}$ for some $c_0\in \F_4.$
By Lemma \ref{f^w}, $c_0=0$. 
Similar reasoning implies that $f^{(2)}= c_2 \cdot f^{(v)}$ for some 
$c_2\in \F_4.$
Since $f$ and $f^{(2)}$ are in the Selmer group, the homomorphism $f^{(1)}$ must also be in the Selmer group.
Since
$$\text{Im}(f^{(1)}|_K) \subseteq
\Gal(K(\sqrt{2})/K) \oplus
\Gal(K(\sqrt{v})/K) \oplus
\Gal(K(\sqrt{w})/K),$$
it follows that
$$
f^{(1)} = \sum_{q\in \{2,v,w \} }
\chi_q
\begin{pmatrix}
 a_q & \\
 & b_q
\end{pmatrix}
$$
for some $ a_q, b_q \in \F_4.$ It is easy to see that the only homomorphism of this form which lives in the Selmer group is a scalar multiple of $ \chi_2 I$.
\end{proof}

\section{Selmer ranks in families}
As above, let $S=\{\infty,2,7,l,v,w\}$.
Let $p$ be a trivial prime such that the the field $K^{(p)}$ above $K$ constructed in section \ref{irred} by means of the tame Gras-Munnier theorem has Galois group over $K$ isomorphic to
$\F_4(\psi^{-1})$ as $\F_4[\Gal(K/\mathbb{Q})]$-modules. Let $f^{(p)}$ be the corresponding cohomology class
in $H^1(G_{S\cup \{ p \}}, \F_4(\psi^{-1}))$. We will study the family of Selmer groups $H^1_\mathcal{N}(G_{S\cup \{ p \}}, Ad(\overline{\rho}))$ obtained by allowing ramification at such trivial primes $p$.
\begin{lemma}
Let $p$ be a trivial prime such that $\Gal(K^{(p)}/K)\simeq \F_4(\psi^{-1})$ as $\F_4[\Gal(K/\mathbb{Q})]$-modules.
Let $\mathcal{N}_p$ be the subspace of $H^1(G_p, Ad(\overline{\rho}))$ that preserves $\mathcal{C}_p^{\operatorname{unr},\geq 3}$. Then $\mathcal{N}_p$ is the span of the cohomology classes 
$$
f_p^{(1)}: 
\sigma_v \mapsto 
\begin{pmatrix}
0 & 1 \\
0 & 0
\end{pmatrix}
, \quad \tau_v \mapsto 
\begin{pmatrix}
0 & 0 \\
0 & 0
\end{pmatrix},
\quad 
f_p^{(2)}: 
\sigma_v \mapsto 
\begin{pmatrix}
0 & 0 \\
0 & 0
\end{pmatrix}
, \quad \tau_v \mapsto 
\begin{pmatrix}
0 & 1 \\
0 & 0
\end{pmatrix}
$$
and 
$$
f_p^{(3)}: 
\sigma_v \mapsto 
\begin{pmatrix}
1 & 0 \\
0 & 1
\end{pmatrix}
, \quad \tau_v \mapsto 
\begin{pmatrix}
0 & 0 \\
0 & 0
\end{pmatrix},
\quad 
g_p^{\text{\upshape{unr}}}: 
\sigma_v \mapsto 
\begin{pmatrix}
0 & 0 \\
1 & 0
\end{pmatrix}
, \quad \tau_v \mapsto 
\begin{pmatrix}
0 & 0 \\
0 & 0
\end{pmatrix}.
$$
We will also denote $g_p^{\text{\upshape{unr}}} $ by $f_p^{(4)}.$ 
\end{lemma}

\begin{proof}
Since $\rho_2$ is unramified at $p$, $\mathcal{N}_p = \mathcal{N}_p^{\operatorname{unr}}$. The statement follows by Corollary \ref{N_p}. 
\end{proof}

\begin{lemma}
The cohomology class $f^{(v)}$ belongs to $H^1_\mathcal{N}(G_{S\cup \{ p \}}, Ad(\overline{\rho})).$
\end{lemma}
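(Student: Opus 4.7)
My plan is to argue that the only local condition left to verify is the one at $p$, since the work of the previous section already showed $f^{(v)} \in H^1_{\mathcal{N}}(G_S, Ad(\overline{\rho}))$, and the local conditions at the primes of $S$ are preserved under inflation from $G_S$ to $G_{S \cup \{p\}}$. Thus everything reduces to showing $f^{(v)}|_{G_p} \in \mathcal{N}_p$.

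First I would use that $f^{(v)}$ takes values in the upper-right corner $\begin{pmatrix} & * \\ & \end{pmatrix} \simeq \F_4(\psi^{-1})$ of $Ad(\overline{\rho})$. Since $p$ is trivial, $p$ splits completely in $K = \Q(\overline{\rho})$, so $G_p \subseteq \ker \overline{\rho}$ and in particular $I_p \cap \ker \overline{\rho} = I_p$. Because $K^{(v)}/K$ is ramified only at primes above $v$, the restriction $f^{(v)}|_{I_p}$, viewed as a homomorphism from inertia to $\F_4$, must vanish. Hence $f^{(v)}|_{G_p}$ is unramified, i.e.\ $f^{(v)}(\tau_p) = 0$.

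Once this is established, the restriction has the form
$$ f^{(v)}|_{G_p}: \sigma_p \mapsto \begin{pmatrix} 0 & a \\ 0 & 0 \end{pmatrix}, \quad \tau_p \mapsto \begin{pmatrix} 0 & 0 \\ 0 & 0 \end{pmatrix} $$
for some $a \in \F_4$, which is exactly $a \cdot f_p^{(1)}$. Since $f_p^{(1)}$ is one of the spanning cocycles of $\mathcal{N}_p$, we conclude $f^{(v)}|_{G_p} \in \mathcal{N}_p$ and therefore $f^{(v)} \in H^1_{\mathcal{N}}(G_{S\cup\{p\}}, Ad(\overline{\rho}))$.

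There is no real obstacle here; the lemma is a direct bookkeeping check once one observes that the spanning classes of $\mathcal{N}_p$ already contain the entire image of $H^1_{\text{unr}}(G_p, \begin{pmatrix} & * \\ & \end{pmatrix})$, and that $f^{(v)}|_{G_p}$ is unramified because $K^{(v)}$ is unramified at $p$. This is essentially the same type of argument used in the preceding section to verify that $f^{(v)}$ lives in $\mathcal{N}_v$ via Lemma \ref{N_v}, just transported to the new prime $p$.
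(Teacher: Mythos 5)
Your proposal is correct and follows essentially the same route as the paper: reduce to the single remaining local condition at $p$ and observe that the restriction lands in the span of the classes $f_p^{(1)}, f_p^{(2)}$ from Corollary \ref{N_p}. The paper is slightly more economical in that it does not even need your unramifiedness observation, since the whole space $H^1(G_p, \begin{pmatrix} & * \\ & \end{pmatrix})$ (ramified part included, via $f_p^{(2)}$) is already contained in $\mathcal{N}_p$; your extra step is harmless and in fact automatic, because $f^{(v)}$ lives in $H^1(G_S,\cdot)$ with $p \notin S$.
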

\begin{proof}
The only thing left to check is the local condition at $p$. By Corollary \ref{N_p}, 
$$
f^{(v)}|_{G_p} \in H^1(G_p, 
\begin{pmatrix}
 & * \\
  & 
\end{pmatrix}) \subseteq \mathcal{N}_p.
$$
\end{proof}

\begin{lemma}
The cohomology class $\chi_2 I$ belongs to $H^1_\mathcal{N}(G_{S\cup \{ p \}}, Ad(\overline{\rho})).$
\end{lemma}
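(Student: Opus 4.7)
The plan is to reduce everything to the one new local condition at $p$. We already proved in the previous section that $\chi_2 I \in H^1_\mathcal{N}(G_S, Ad(\overline{\rho}))$, so $\chi_2 I|_{G_q} \in \mathcal{N}_q$ for every $q \in S = \{\infty, 2, 7, l, v, w\}$. Since enlarging the ramification set from $S$ to $S \cup \{p\}$ does not change the Selmer condition at any of these primes, all that is needed is to verify that $\chi_2 I|_{G_p} \in \mathcal{N}_p$.

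The key observation is that $\chi_2$ is the quadratic character associated to $\Q(\sqrt{2})/\Q$, hence ramified only at $2$ and $\infty$. Since $p$ is a trivial prime (in particular $p$ is odd and distinct from $2$), the restriction $\chi_2|_{G_p}$ is unramified; that is, $\chi_2(\tau_p) = 0$ and $\chi_2|_{G_p}$ is determined by its value $\chi_2(\sigma_p) \in \F_2 \subseteq \F_4$ on Frobenius. Consequently,
$$
\chi_2 I|_{G_p}: \sigma_p \mapsto \chi_2(\sigma_p)\,I, \quad \tau_p \mapsto 0.
$$

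Comparing with the description of $\mathcal{N}_p$ in the lemma above, we see that $\chi_2 I|_{G_p}$ is either identically zero (when $p$ splits in $\Q(\sqrt{2})$, i.e.\ $p \equiv \pm 1 \pmod 8$) or precisely equal to the cocycle $f_p^{(3)}$ (when $p$ is inert in $\Q(\sqrt{2})$, i.e.\ $p \equiv \pm 3 \pmod 8$). In either case, $\chi_2 I|_{G_p}$ belongs to $\mathrm{span}_{\F_4}\{f_p^{(1)}, f_p^{(2)}, f_p^{(3)}, g_p^{\text{unr}}\} = \mathcal{N}_p$, which completes the verification. There is no real obstacle here; the argument is essentially a one-line check once one observes that $\chi_2$ is unramified away from $2$ and that $f_p^{(3)}$ is exactly the scalar unramified class at $p$.
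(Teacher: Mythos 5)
Your proof is correct and takes essentially the same route as the paper: reduce to the single new local condition at $p$, note that $\chi_2 I|_{G_p}$ is unramified since $p \neq 2$, and identify it with $\chi_2(\sigma_p) f_p^{(3)} \in \mathcal{N}_p$. Your explicit handling of both cases $\chi_2(\sigma_p)=0$ and $\chi_2(\sigma_p)=1$ is in fact slightly more careful than the paper's parenthetical assertion that $\chi_2(\sigma_p)=1$ (which would require $p \not\equiv \pm 1 \bmod 8$ rather than just $p \not\equiv 1 \bmod 8$), but either way the class lands in $\mathcal{N}_p$, so the conclusion stands.
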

\begin{proof} Again, it suffices to consider the local condition at $p$:
$$
\chi_2 I |_{G_p} : \sigma_p \mapsto
\chi_2(\sigma_p)
\begin{pmatrix}
 1 & \\
  & 1 
\end{pmatrix},
\quad\tau_p \mapsto 0,
$$
which equals $\chi_2(\sigma_p) f_p^{(3)} \in \mathcal{N}_p$. (In fact $\chi_2(\sigma_p) = 1$ since $p \not \equiv 1 \mod 8$.)
\end{proof}

\begin{lemma} \label{f^p case1}
Suppose $f^{(w)}(\sigma_p) = 0.$ Then the cohomology class $f^{(p)}$ belongs to 
$H^1_\mathcal{N}(G_{S\cup \{ p \}}, Ad(\overline{\rho})).$
\end{lemma}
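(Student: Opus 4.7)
The plan is to verify the local Selmer conditions $f^{(p)}|_{G_q}\in\mathcal{N}_q$ for each $q\in S\cup\{p\}=\{\infty,2,7,l,v,w,p\}$. Three structural features of $f^{(p)}$ drive everything: it takes values in $\F_4(\psi^{-1})$, embedded in $Ad(\overline{\rho})$ as the upper-right corner $\begin{pmatrix}&*\\&\end{pmatrix}$; it is unramified at every prime $q\neq p$, since $K^{(p)}/K$ is ramified only at primes above $p$; and $K^{(p)}$ is totally real, because $K$ is and $K^{(p)}/K$ is unramified at infinity.

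Given these features, five of the seven local conditions will be routine. At $q=\infty$, totally realness forces $f^{(p)}|_{G_\infty}=0\in\mathcal{N}_\infty$. At $q=7$ and $q=l$, I will invoke Lemmas \ref{N_7} and \ref{N_l}, which identify $\mathcal{N}_q$ with the full unramified subspace of $H^1(G_q,Ad(\overline{\rho}))$, a subspace that contains the unramified $f^{(p)}|_{G_q}$. At $q=2$, Lemma \ref{N_2^1} provides $H^1(G_2,\begin{pmatrix}&*\\&\end{pmatrix})=\mathcal{N}_2^{(1)}\subseteq\mathcal{N}_2$, which absorbs $f^{(p)}|_{G_2}$. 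At $q=v$, the classes $f_v^{(1)}$ and $f_v^{(2)}$ of Lemma \ref{N_v} span the upper-right cohomology $H^1(G_v,\begin{pmatrix}&*\\&\end{pmatrix})$ and lie in $\mathcal{N}_v$. At $q=p$, the analogous pair $f_p^{(1)},f_p^{(2)}$ spans the upper-right $H^1(G_p,\begin{pmatrix}&*\\&\end{pmatrix})$ and lies in $\mathcal{N}_p$.

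The delicate case is $q=w$. Inspecting the spanning set of Lemma \ref{N_w}, I observe that no nonzero unramified upper-right class belongs to $\mathcal{N}_w$: the only generator with an upper-right value at $\sigma_w$ is $g_w^{\text{ram}}$, and $g_w^{\text{ram}}(\tau_w)=I\neq 0$, so any linear combination in $\mathcal{N}_w$ that vanishes on $\tau_w$ also vanishes in the upper-right at $\sigma_w$. Hence I need the outright vanishing $f^{(p)}|_{G_w}=0$, equivalently $f^{(p)}(\sigma_w)=0$. To extract this from the hypothesis $f^{(w)}(\sigma_p)=0$, I will replay the global reciprocity argument of Lemma \ref{globalreciprocity} with $(v,w)$ replaced by $(p,w)$: by Lemma \ref{character}, $\F_4(\psi^{-1})^*\simeq\F_4(\psi)$, so $f^{(p)}$ and $f^{(w)}$ pair by cup product; at every prime $q\neq p,w$ both classes are unramified and inflate from $G_q/I_q$ acting on $\F_4$, so the local cup product dies in $H^2(G_q/I_q,\mu)=0$; and the reciprocity law collapses to $\text{inv}_p(f^{(p)}\cup f^{(w)})+\text{inv}_w(f^{(p)}\cup f^{(w)})=0$. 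Combined with the dichotomy of Lemma \ref{localpairing} at the trivial primes $p$ and $w$, this converts to $f^{(w)}(\sigma_p)=0\iff f^{(p)}(\sigma_w)=0$, so the hypothesis delivers $f^{(p)}|_{G_w}=0\in\mathcal{N}_w$. The main obstacle is this last transfer of the reciprocity argument to $p$; it goes through because all that was used of $v$ in Lemma \ref{globalreciprocity} was the unramifiedness of $f^{(v)}$ outside $v$ and the triviality of $\psi$ on $G_v$, both of which hold for $p$.
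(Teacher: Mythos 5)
Your proposal is correct and follows essentially the same route as the paper: check the local condition prime by prime, absorb $f^{(p)}|_{G_2}$ via Lemma \ref{N_2^1}, and use the global reciprocity argument of Lemma \ref{globalreciprocity} (with $(p,w)$ in place of $(v,w)$, together with Lemma \ref{localpairing}) to turn $f^{(w)}(\sigma_p)=0$ into $f^{(p)}|_{G_w}=0$. You simply spell out the cases the paper dismisses as clear (including the condition at $p$ itself and why only the zero unramified upper-right class lies in $\mathcal{N}_w$), which is a faithful filling-in of the same proof.
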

\begin{proof}
Since $f^{(w)}|_{G_p}= 0,$ it follows by the global reciprocity law, as in Lemma \ref{globalreciprocity}, that 
$f^{(p)}|_{G_w} = 0.$
It remains to check that $f^{(p)}|_{G_q} \in \mathcal{N}_q$ for the remaining primes $q$ in $S$. This is clear for 
$\infty, 7, l$ and $v$. By Lemma \ref{N_2}, 
$$
f^{(p)}|_{G_2} \in H^1(G_2, \begin{pmatrix}
 & * \\
 &
\end{pmatrix}) \subseteq \mathcal{N}_2.
$$
\end{proof}

\begin{lemma} Suppose $f^{(w)}(\sigma_p) \neq 0.$ Then the cohomology class $f^{(p)}$ does not belong to 
$H^1_\mathcal{N}(G_{S\cup \{ p \}}, Ad(\overline{\rho})).$
\end{lemma}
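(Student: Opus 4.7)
The plan is to show that under the assumption $f^{(w)}(\sigma_p) \neq 0$, the restriction $f^{(p)}|_{G_w}$ is nonzero and lands outside $\mathcal{N}_w$, which immediately forces $f^{(p)}$ out of the Selmer group. The first step is to establish that $f^{(p)}|_{G_w} \neq 0$ by running the same global reciprocity argument as in Lemma \ref{globalreciprocity}, now applied to the pair $(f^{(p)}, f^{(w)})$. Since $K^{(p)}/K$ is ramified only at primes above $p$ and $K^{(w)}/K$ is ramified only at primes above $w$, for every prime $q \notin \{p,w\}$ both $f^{(p)}|_{G_q}$ and $f^{(w)}|_{G_q}$ inflate from $G_q/I_q$, so their cup product lies in $H^2(G_q/I_q,\mu)=0$. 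Global reciprocity therefore reduces to
$$
\mathrm{inv}_p\bigl(f^{(p)}|_{G_p} \cup f^{(w)}|_{G_p}\bigr) + \mathrm{inv}_w\bigl(f^{(p)}|_{G_w} \cup f^{(w)}|_{G_w}\bigr) = 0.
$$
At $p$, the class $f^{(p)}|_{G_p}$ is ramified while $f^{(w)}|_{G_p}$ is unramified and nonzero (since $f^{(w)}(\sigma_p) \neq 0$ and $f^{(w)}(\tau_p)=0$), so by Lemma \ref{localpairing} the left-hand cup product is nonzero. Hence the $w$-term is nonzero as well, and in particular $f^{(p)}|_{G_w} \neq 0$.

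The second step is a direct linear-algebra check against the basis of $\mathcal{N}_w$ given in Lemma \ref{N_w}. Because $f^{(p)} \in H^1(G_S, \F_4(\psi^{-1}))$ and $\F_4(\psi^{-1})$ sits inside $Ad(\overline{\rho})$ as the upper-right entry, $f^{(p)}|_{G_w}$ takes values of the form $\begin{pmatrix} 0 & * \\ 0 & 0 \end{pmatrix}$. Since $K^{(p)}/K$ is unramified at $w$, the class $f^{(p)}|_{G_w}$ is unramified, so $f^{(p)}|_{G_w}(\tau_w)=0$, and by the previous step $f^{(p)}|_{G_w}(\sigma_w) = \begin{pmatrix} 0 & c \\ 0 & 0 \end{pmatrix}$ for some $c \neq 0$. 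Suppose for contradiction that $f^{(p)}|_{G_w} = a_1 f_w^{(1)} + a_2 f_w^{(2)} + a_3 f_w^{(3)} + a_4 g_w^{\text{ram}}$. Evaluating on $\sigma_w$ gives $\begin{pmatrix} a_3 & a_4 \\ a_1 & a_3 \end{pmatrix} = \begin{pmatrix} 0 & c \\ 0 & 0 \end{pmatrix}$, so $a_1=a_3=0$ and $a_4=c$. Evaluating on $\tau_w$ then gives $\begin{pmatrix} c & 0 \\ a_2 & c \end{pmatrix} = 0$, forcing $c=0$, a contradiction. Therefore $f^{(p)}|_{G_w} \notin \mathcal{N}_w$, and so $f^{(p)} \notin H^1_\mathcal{N}(G_{S\cup\{p\}}, Ad(\overline{\rho}))$.

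The only subtle point is the first step, and even there the work has essentially been done in Lemma \ref{globalreciprocity}; the key observation is that the roles of $v$ and $p$ are symmetric in that argument, so the same inflation/cup-product reasoning transports without change. The second step is purely mechanical once one remembers that $f^{(p)}$ lives only in the upper-right strand of the adjoint while the only member of the basis of $\mathcal{N}_w$ with an upper-right contribution is $g_w^{\text{ram}}$, whose ramified scalar contribution on $\tau_w$ cannot be cancelled by the remaining basis vectors.
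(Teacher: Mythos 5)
Your proposal is correct and follows essentially the same route as the paper: the paper likewise invokes the global reciprocity argument of Lemma \ref{globalreciprocity} (with $p$ playing the role of $v$) to conclude $f^{(p)}(\sigma_w)\neq 0$, and then performs the identical evaluation of a putative expansion $\sum_i \lambda_i f_w^{(i)}$ on $\sigma_w$ and $\tau_w$ to force $\lambda_4=0$ and reach a contradiction. The only difference is presentational: you re-run the inflation/cup-product reasoning explicitly for the pair $(f^{(p)},f^{(w)})$, whereas the paper simply cites the lemma's analogue.
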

\begin{proof}
By Lemma \ref{globalreciprocity},
$$
f^{(p)}(\sigma_w) \neq 0.
$$
It follows that 
$$
f^{(p)}|_{G_w}: \sigma_w \mapsto 
\begin{pmatrix}
  & b\\
  & 
\end{pmatrix},
\quad
\tau_w \mapsto 0.
$$
for some $b\neq 0$. Let $f_w^{(1)},f_w^{(2)}, f_w^{(3)}$ and $g_w^\text{ram}$ be the four basis elements of $\mathcal{N}_w$ as in Lemmma \ref{N_w}, and let $f_w^{(4)} = g_w^\text{ram}$, which is defined as follows:
$$
g_w^{\text{ram}}: \sigma_w \mapsto 
\begin{pmatrix}
  & 1\\
  & 
\end{pmatrix}, \quad 
\tau_w \mapsto 
\begin{pmatrix}
 1 & \\
  & 1
\end{pmatrix}.
$$
Suppse
$$ f^{(p)}|_{G_w}
=\sum_{i=1}^4 \lambda_i f_w^{(i)}.
$$
Then 
$$
f^{(p)}(\sigma_w) = 
\begin{pmatrix}
  & b\\
  & 
\end{pmatrix} 
=
\begin{pmatrix}
  \lambda_3 & \lambda_4 \\
  \lambda_1 & \lambda_3  
\end{pmatrix}.
$$
Hence $\lambda_1,\lambda_3=0$ and $b=\lambda_4.$ Also,
$$
f^{(p)}(\tau_w) = 
\begin{pmatrix}
 0 & 0\\
 0 & 0
\end{pmatrix} 
=
\begin{pmatrix}
  \lambda_4 &  \\
  \lambda_2 & \lambda_4  
\end{pmatrix}.
$$
Hence $\lambda_4 =0$, and this contradiction shows that $f^{(p)}|_{G_w} \notin \mathcal{N}_w.$
\end{proof}

\begin{lemma} Let $T$ be a finite set of primes containing $S$ and let $M$ be a Galois stable subspace of $Ad(\overline{\rho})$ or $Ad(\overline{\rho})^*$ over $\F_4$.
The injective inflation map
$$
H^1(G_T , M) \to H^1(G_{ T \cup \{ p \}}, M)
$$
has cokernel of dimension equal to 
$$
\dim  H^2(G_p, M) = \dim M.
$$
\end{lemma}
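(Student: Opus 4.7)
The plan is to reduce the $H^1$ comparison to an $H^2$ comparison using Tate's global Euler-Poincar\'{e} characteristic, and then to compute the $H^2$ difference via the last row of the Poitou-Tate nine-term sequence together with the vanishing of an appropriate Shafarevich-Tate group.

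Since $T$ contains both $\infty$ and $2$ and $M$ is annihilated by $2$, Tate's global Euler characteristic formula applies and shows that
$$
\chi(G_T, M) := \dim H^0(G_T,M) - \dim H^1(G_T,M) + \dim H^2(G_T,M)
$$
depends only on archimedean data, hence agrees for $T$ and $T \cup \{p\}$. Since $H^0(G_T,M) = M^{G_\Q} = H^0(G_{T\cup\{p\}},M)$, subtracting yields
$$
\dim H^1(G_{T\cup\{p\}},M) - \dim H^1(G_T,M) = \dim H^2(G_{T\cup\{p\}},M) - \dim H^2(G_T,M).
$$

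To compute the right-hand side I would use the last row of the Poitou-Tate nine-term sequence,
$$
0 \to \Sha^2_T(M) \to H^2(G_T,M) \to \bigoplus_{v \in T} H^2(G_v,M) \to H^0(G_T,M^*)^\vee \to 0,
$$
together with its analogue for $T \cup \{p\}$. The crucial input is $\Sha^2_T(M) = 0$: by Poitou-Tate duality $\Sha^2_T(M) \cong \Sha^1_T(M^*)^\vee$, and Lemma \ref{trivialsha} forces $\Sha^1_T(M^*) = 0$. I would note that the proof of that lemma uses only that the Galois action on the coefficients factors through $\Gal(K/\Q)$, that every prime in the defining set splits completely in the extension cut out by the cocycle, and that $K$ has trivial class group; none of these ingredients is sensitive to replacing $S$ by the larger set $T$ or $M$ by $M^*$, so the vanishing carries over. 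The same argument gives $\Sha^2_{T\cup\{p\}}(M) = 0$, and comparing the two exact sequences --- the $H^2(G_v,M)$ summands for $v \in T$ cancel and $H^0(G_T,M^*) = H^0(G_{T\cup\{p\}},M^*)$ --- yields
$$
\dim H^2(G_{T\cup\{p\}},M) - \dim H^2(G_T,M) = \dim H^2(G_p,M).
$$

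Finally, since $p$ is a trivial prime, $G_p$ acts trivially on $M^*$, so local Tate duality gives $\dim H^2(G_p,M) = \dim H^0(G_p,M^*) = \dim M$, which is the stated equality. The main obstacle I anticipate is the bookkeeping needed to extend Lemma \ref{trivialsha} from $(S,M)$ to $(T,M^*)$; this is a direct rereading of the existing proof, but it should be stated explicitly.
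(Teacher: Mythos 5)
Your proof is correct, and it takes a somewhat different route from the paper's. The paper proves the lemma by applying the Wiles--Greenberg formula twice, with the full local conditions $\mathcal{L}_q = H^1(G_q, M)$ for all $q$, so that the Selmer group is all of $H^1(G_T,M)$ and the dual Selmer group degenerates to $\Sha^1_T(M^*)$, which vanishes by Lemma \ref{trivialsha}; subtracting the two instances of the formula leaves exactly the local term at $p$, and the local Euler--Poincar\'e characteristic at $p$ (where $|\#M|_p=1$) converts $\dim H^1(G_p,M)-\dim H^0(G_p,M)$ into $\dim H^2(G_p,M)$. You instead compare the global Euler characteristics for $T$ and $T\cup\{p\}$ (which agree, being determined by archimedean data), reducing the $H^1$ comparison to an $H^2$ comparison, and then compute the $H^2$ difference from the tail of the Poitou--Tate sequence using $\Sha^2_T(M)=0$, which is dual to the same vanishing $\Sha^1_T(M^*)=0$. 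Since the Wiles--Greenberg formula is itself assembled from these two ingredients, the arguments are morally equivalent, but yours isolates the $H^2$ bookkeeping explicitly, and it also verifies the stated equality $\dim H^2(G_p,M)=\dim M$ via local duality and the triviality of the $G_p$-action, a point the paper leaves implicit. Your remark that Lemma \ref{trivialsha}, as stated for $S$, must be rerun for the larger set $T$ (and for $M^*$) is well taken: the paper itself invokes it for $T$ and $T\cup\{p\}$ without comment, and the proof does go through verbatim since it only uses that the action factors through $\Gal(K/\Q)$ (of order prime to $2$), that locally trivial classes cut out extensions of $K$ split at all primes of the ramification set, and that $K$ has class number one.
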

\begin{proof}
This follows from Wiles' formula with $\mathcal{L}_q = H^1(G_q, M)$ for all $q\in T\cup \{ p\}$. Then 
$$H^1_\mathcal{L}(G_{T}, M) = H^1(G_{T}, M), 
\quad 
 H^1_\mathcal{L}(G_{T\cup \{ p\}}, M) = H^1(G_{T\cup \{ p\}}, M)
$$
and since $\mathcal{L}_q^\perp=0,$
$$H^1_\mathcal{L^\perp}(G_{T}, M^*) = \Sha^1_T(M^*),
\quad 
H^1_\mathcal{L^\perp}(G_{T\cup \{ p\}}, M^*) = \Sha^1_{T\cup \{ p\}}(M^*).
$$ 
By Lemma \ref{trivialsha}, 
$$\Sha^1_{T\cup \{ p\}}(M^*) \subseteq \Sha^1_T(M^*) =0.$$
It follows that 
\begin{IEEEeqnarray*}{ll}
 & \dim H^1 (G_T, M)  \\
= \quad & 
\dim H^0 (G_T, M) - \dim H^0 (G_T, M^*) 
+
\sum_{q\in T} \dim \mathcal{L}_q - \dim H^0(G_q, M),   
\end{IEEEeqnarray*}
while 
\begin{IEEEeqnarray*}{ll}
 & \dim H^1 (G_{T\cup \{ p\}}, M)   \\
= \quad & 
\dim H^0 (G_{T\cup \{ p\}}, M) - \dim H^0 (G_{T\cup \{ p\}}, M^*) 
+
\sum_{q\in T\cup \{ p\} } \dim \mathcal{L}_q - \dim H^0(G_q, M).   
\end{IEEEeqnarray*}
Clearly, the terms involving $H^0$ do not depend on the set of primes. Thus, 
\begin{IEEEeqnarray*}{rCl}
\dim H^1 (G_{T\cup \{ p\}}, M) &=& 
\dim H^1 (G_T, M) + \dim \mathcal{L}_p - \dim H^0(G_p, M) \\
&=&
\dim H^1 (G_T, M) + \dim H^1(G_p, M) - \dim H^0(G_q, M) \\
&=&
\dim H^1 (G_T, M) + \dim H^2(G_p, M).
\end{IEEEeqnarray*}
\end{proof}
The cohomology class $f^{(p)}$ is clearly a generator of the 1-dimensional cokernel of the inflation map
$$
H^1(G_S , 
\begin{pmatrix}
  & *\\
  & 
\end{pmatrix}
) \to H^1(G_{ S \cup \{ p \}}, 
\begin{pmatrix}
  & * \\
  & 
\end{pmatrix}
).
$$

\begin{lemma} The inflation map
$$
H^1(G_S , 
\begin{pmatrix}
 * & \\
 * & *
\end{pmatrix}
) \to H^1(G_{ S \cup \{ p \}}, 
\begin{pmatrix}
 * & \\
 * & *
\end{pmatrix}
)
$$
has a 3-dimensional cokernel. Let  $g$ be a cohomology class in 
$H^1(G_{ S \cup \{ p \}}, 
\begin{pmatrix}
 * & \\
 * & *
\end{pmatrix}
),$ and assume that $g$ is not in the image of the inflation map above.
Then $g$ is not in the Selmer group $H^1_\mathcal{N}(G_{S\cup \{ p \}}, Ad(\overline{\rho})).$
\end{lemma}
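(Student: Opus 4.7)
The plan has two steps. For the dimension of the cokernel, I apply the preceding lemma to the $3$-dimensional $\F_4[G_\Q]$-submodule $M = \begin{pmatrix} * & \\ * & * \end{pmatrix}$ of $Ad(\overline{\rho})$; the cokernel has dimension $\dim H^2(G_p, M)$, which equals $3$ because $p$ is a trivial prime, so $G_p$ acts trivially on $M$, hence $M^* \cong M$ as $G_p$-modules, and local duality together with $\dim M = 3$ gives $\dim H^2(G_p, M) = \dim H^0(G_p, M^*) = 3$.

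For the Selmer statement, I plan to argue by contraposition by first computing $\mathcal{N}_p \cap H^1(G_p, M)$ explicitly from the generators of $\mathcal{N}_p$ given in the preceding lemma. The classes $f_p^{(1)}$ and $f_p^{(2)}$ take values in the upper-right entry and therefore do not lie in $H^1(G_p, M)$; the remaining generators $f_p^{(3)}$ (diagonal) and $g_p^{\mathrm{unr}}$ (lower-left) do take values in $M$. Hence $\mathcal{N}_p \cap H^1(G_p, M) = \F_4 f_p^{(3)} \oplus \F_4 g_p^{\mathrm{unr}}$, and since both generators send $\tau_p$ to $0$, every class in this intersection is unramified at $p$.

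Next, I invoke the $5$-term inflation-restriction sequence attached to the surjection $G_{S\cup\{p\}} \twoheadrightarrow G_S$. Its kernel acts trivially on $M$ (since $\overline{\rho}$ is unramified at $p$, so $M$ is already a $G_S$-module) and is normally generated by the inertia subgroup $I_p$ at a chosen prime above $p$. Consequently the image of the inflation map equals the kernel of the restriction $H^1(G_{S\cup\{p\}}, M) \to H^1(I_p, M)$. Thus if $g$ is not in the image of inflation, then $g|_{I_p}\neq 0$, so $g|_{G_p}$ is ramified at $p$, and by the preceding paragraph it cannot lie in $\mathcal{N}_p \cap H^1(G_p, M)$. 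This forces $g \notin H^1_{\mathcal{N}}(G_{S\cup\{p\}}, Ad(\overline{\rho}))$, as desired.

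The only non-routine step is the inflation-restriction identification of the image of inflation with the classes unramified at $p$, which is standard; everything else is an explicit bookkeeping check against the bases of $\mathcal{N}_p$ and the three entries defining $M$.
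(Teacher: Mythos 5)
Your proposal is correct and takes essentially the same approach as the paper: the $3$-dimensional cokernel comes from applying the preceding lemma to the Galois-stable submodule $\begin{pmatrix} * & \\ * & * \end{pmatrix}$, and the Selmer exclusion comes from the fact that such a $g$ must be ramified at $p$ with values in that submodule, which is incompatible with the explicit basis $f_p^{(1)},f_p^{(2)},f_p^{(3)},g_p^{\mathrm{unr}}$ of $\mathcal{N}_p$ (whose only ramified direction is the upper-right entry). The only difference is that you justify, via the inflation--restriction sequence and conjugacy of the inertia subgroups above $p$, why not lying in the image of inflation forces $g(\tau_p)\neq 0$, a step the paper leaves implicit.
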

\begin{proof}
$$
g(\tau_p)
=
\begin{pmatrix}
 a & \\
 c & d
\end{pmatrix}
$$
where at least one of $a, c, d$ are not zero. By Corollary \ref{N_p}, it follows that $g|_{G_p} \notin \mathcal{N}_p$. 
\end{proof}

At this point, we have seen that the inflation of any element in $H^1_\mathcal{N}(G_S, Ad(\overline{\rho}))$ is also contained in $H^1_\mathcal{N}(G_{S\cup \{ p \}}, Ad(\overline{\rho}))$. It follows that
$$ \dim H^1_\mathcal{N}(G_{S\cup \{ p \}}, Ad(\overline{\rho}))
\geq 
\dim H^1_\mathcal{N}(G_S, Ad(\overline{\rho})).
$$

\begin{lemma}
Suppose $f^{(w)}(\sigma_p) = 0.$ Then 
$$ \dim H^1_\mathcal{N}(G_{S\cup \{ p \}}, Ad(\overline{\rho}))
=
\dim H^1_\mathcal{N}(G_{S}, Ad(\overline{\rho})) + 1,
$$
since
$$
H^1_\mathcal{N}(G_{S\cup \{ p \}}, Ad(\overline{\rho})) =\operatorname{span}_{\F_4} \{  
\chi_2 I, f^{(v)},  f^{(p)}
\}.
$$
\end{lemma}
\begin{proof} Since $f^{(w)}(\sigma_p) = 0,$ we know from Lemma \ref{f^p case1} that $f^{(p)} \in H^1_\mathcal{N}(G_{S\cup \{ p \}}, Ad(\overline{\rho}))$. We will show that $H^1_\mathcal{N}(G_{S\cup \{ p \}}, Ad(\overline{\rho}))$ equals the span of the three cohomology classes $f^{(p)}, f^{(v)}$ and $\chi_2 I$.
Suppose $ f\in H^1 (G_{S\cup \{ p \}}, Ad(\overline{\rho}))$, and write
$$
f = b f^{(p)} + g + f^{(S)},
$$
where $f^{(S)}$ is in the image of the inflation map
$$ 
 H^1(G_S, Ad(\overline{\rho}))
\to 
H^1(G_{S\cup \{ p \}}, Ad(\overline{\rho})),
$$
 while $ g \in H^1(G_{ S \cup \{ p \}}, 
\begin{pmatrix}
 * & \\
 * & *
\end{pmatrix}
)$ is not in the image of inflation. 
Then
$$
f(\tau_p) = b f^{(p)}(\tau_p) + g(\tau_p) 
=
\begin{pmatrix}
 a & b \\
 c & d
\end{pmatrix},
$$
where at least one of $a, c $ and $d$ is not zero. But then $f|_{G_p} \notin \mathcal{N}_p$ by Corollary \ref{N_p}.
Hence, if $ f\in H^1_\mathcal{N} (G_{S\cup \{ p \}}, Ad(\overline{\rho}))$, then 
$$
f = b f^{(p)} + f^{(S)},
$$
where $f^{(S)}$ is in the image of the inflation map as above. Since $f^{(p)} \in H^1_\mathcal{N} (G_{S\cup \{ p \}}, Ad(\overline{\rho})),$ it follows that also $f^{(S)} \in H^1_\mathcal{N} (G_{S\cup \{ p \}}, Ad(\overline{\rho}))$.
In particular, $f^{(S)} \in H^1_\mathcal{N} (G_{S}, Ad(\overline{\rho}))$, so 
$f^{(S)}$ is a linear combination of $f^{(v)}$ and $\chi_2 I$.
\end{proof}

\begin{lemma} \label{lemmaQ_2}
The quadratic character $\chi_w$ corresponding to $\Q(\sqrt{w})$ has the property that 
$$\chi_w
\begin{pmatrix}
a & \\
& b
\end{pmatrix}|_{G_2} \notin \mathcal{N}_2.
$$
for any $a,b \in \F_4$ not both equal to zero.
\end{lemma}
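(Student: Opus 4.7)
The plan is to reduce the lemma to a single Hilbert symbol computation at $p=2$. Since $\chi_w\begin{pmatrix} a & \\ & b\end{pmatrix}|_{G_2}$ is purely diagonal, its preimage under the injection $\bigoplus_{i=1}^4 \mathcal{N}_2^{(i)} \hookrightarrow H^1(G_2, Ad(\overline{\rho}))$ from Section \ref{2 and infinity} must have its off-diagonal components $\mathcal{N}_2^{(1)}, \mathcal{N}_2^{(2)}$ equal to zero, while its upper-left and lower-right components must equal $\chi_w \cdot a \in \mathcal{N}_2^{(3)}$ and $\chi_w \cdot b \in \mathcal{N}_2^{(4)}$ respectively. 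So the lemma reduces to showing that $\chi_w|_{G_2}$ does not factor through the quotient $\widehat{\Z} \times \Z_2$ of $G_2^{\mathrm{ab}} \cong \widehat{\Z} \times \Z_2 \times \Z/2$, which by Lemmas \ref{N_2^3} and \ref{N_2^4} is precisely the condition for cohomology classes to lie in $\mathcal{N}_2^{(3)}$ and $\mathcal{N}_2^{(4)}$.

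Next I would translate the factoring condition via local class field theory: the $\Z/2$-torsion of $G_2^{\mathrm{ab}}$ corresponds to the subgroup $\{\pm 1\} \subset \Z_2^\times \subset \Q_2^\times$. Hence $\chi_w|_{G_2}$ factors through $\widehat{\Z} \times \Z_2$ if and only if $\chi_w(-1) = 0 \in \F_2$ when $\chi_w$ is viewed as a character of $\Q_2^\times$. Under Kummer/reciprocity duality this pairing is computed by the Hilbert symbol: $\chi_w(-1) = 1$ iff $(-1, w)_2 = -1$. For odd $w$ the standard formula at $p=2$ gives $(-1, w)_2 = (-1)^{(w-1)/2}$, and since $w$ is a trivial prime we have $w \equiv 3 \pmod 4$, so $(-1, w)_2 = -1$. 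Therefore $\chi_w|_{G_2}$ does \emph{not} vanish on the $\Z/2$-torsion.

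The conclusion is then immediate: $\chi_w \cdot c \in \mathcal{N}_2^{(3)}$ forces $c = 0$, and similarly for $\mathcal{N}_2^{(4)}$. So whenever $(a, b) \neq (0, 0)$, the class $\chi_w\begin{pmatrix} a & \\ & b\end{pmatrix}|_{G_2}$ fails to lie in $\mathcal{N}_2$. The only substantive step is the Hilbert symbol computation; it is illuminating to contrast it with the analogous fact $(-1, 2)_2 = 1$ which was implicitly responsible for $\chi_2 I$ lying in the Selmer group in the previous section, and which precisely distinguishes the behaviour of $\chi_2$ from that of $\chi_w$ at the prime $2$.
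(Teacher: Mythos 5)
Your proof is correct, and while the initial reduction is the same as the paper's (decompose $\mathcal{N}_2$ into the four pieces $\mathcal{N}_2^{(i)}$, note the class is diagonal, and invoke Lemmas \ref{N_2^3} and \ref{N_2^4} to reduce to showing $\chi_w|_{G_2}$ is nonzero on the torsion subgroup of $G_2^{\mathrm{ab}}$), the way you establish that nonvanishing is genuinely different. The paper argues directly with quadratic extensions of $\Q_2$: it splits into the cases $w\equiv -1$ and $w\equiv -5 \pmod 8$, identifies $\Q_2(\sqrt{w})$ with $\Q_2(\sqrt{-1})$ or $\Q_2(\sqrt{-5})$, and in the second case runs a field-diagram argument using that $\Q_2(\sqrt{5})$ is the unramified quadratic extension and $H_{-5}\cap H_5\subseteq H_{-1}$. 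You instead use local class field theory: the torsion of $G_2^{\mathrm{ab}}$ is the image of $\{\pm 1\}\subseteq \Z_2^\times$ under reciprocity (the unramified $\widehat{\Z}$ being torsion-free), so nonvanishing on torsion is exactly $(-1,w)_2=-1$, which the standard formula $(-1,w)_2=(-1)^{(w-1)/2}$ gives immediately from $w\equiv 3\pmod 4$. Your route is shorter and uniform in $w\bmod 8$, at the cost of invoking the explicit dyadic Hilbert symbol formula rather than only elementary facts about quadratic extensions of $\Q_2$; the paper's route is more hands-on and self-contained. Your closing observation that $(-1,2)_2=1$ is what lets $\chi_2$ kill the torsion (and hence lets $\chi_2 I$ survive at $2$) is consistent with the paper's separate argument for $\chi_2$ via $\Q(\zeta_{2^\infty})$ and complex conjugation.
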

\begin{proof} Let $a,b \in \F_4$ not both equal to zero.
In Lemma \ref{chi_w}, we considered $\chi_w$ and showed that $\chi_w
\begin{pmatrix}
a & \\
& b
\end{pmatrix}|_{G_w} \notin \mathcal{N}_w.
$
Below, we will show that 
$\chi_w|_{G_2^{\text{ab}}}$ does not vanish on the torsion part of $G_2^{\text{ab}}$.
By Lemma \ref{N_2}, this implies that
$\chi_w
\begin{pmatrix}
a & \\
& b
\end{pmatrix}|_{G_2} \notin \mathcal{N}_2.
$ 

There are 7 nonisomorphic quadratic extensions of $\Q_2$. If $d$ is odd, the field $\Q_2(\sqrt{d})$ is determined by $d \mod 8$. For each $d$, let $H_d$ denote the subgroup of
$\Gal(\Q_2^{ab}/\Q_2)$ that fixes $\Q_2(\sqrt{d})$.
Since $w \equiv 3 \mod 4$, it holds that either $w\equiv -1 \mod 8$ or $w\equiv -5 \mod 8$.

First, suppose that $w\equiv -1 \mod 8$. Then $\Q_2(\sqrt{w})=\Q_2(\sqrt{-1}).$
Write
$$\Gal(\Q_2^{ab}/\Q_2)\simeq \hat{\Z}\times \Z_2 \times \Z/2\Z, $$
where $\Z_2 \times \Z/2\Z$ is the inertia subgroup.
By definition,
$$
H_{-1} = \hat{\Z} \times \Z_2 \times\{0 \},
$$
so the torsion part is not contained in $H_{-1}$. Therefore $\chi_{-1}|_{G_2}$ does not vanish on the torsion, so neither does $\chi_{w}|_{G_2}$. By Lemma \ref{N_2}, the desired result is now proved for $w\equiv -1 \mod 8.$

Second, suppose that $w\equiv -5 \mod 8$ so that $\Q_2(\sqrt{w})=\Q_2(\sqrt{-5}).$
Consider the diagram of fields below.
\[ 
\begin{tikzcd}
& \Q_2(\sqrt{-5},\sqrt{5}) \arrow[dl,dash] \arrow[d, dash] \arrow[dr,dash] &\\
\Q_2(\sqrt{-5}) \arrow[dr, dash]& \Q_2(\sqrt{-1}) \arrow[d, dash]& \Q_2(\sqrt{5})\arrow[dl, dash]\\
& \Q_2 & \\
\end{tikzcd}
\]
Note that $H_{-5} \cap H_5$ is the subgroup fixing $\Q_2(\sqrt{-5},\sqrt{5})$.
Since $\Q_2(\sqrt{-1})$ is the diagonal field of $\Q_2(\sqrt{-5})$ and $\Q_2(\sqrt{5})$, it follows that
$$
H_{-5} \cap H_5 \subseteq H_{-1}.
$$
Evidently, $\Q_2(\sqrt{5})$ is the unique unramified quadratic extension of $\Q_2$. Indeed, 
$\Q_2(\sqrt{5})=\Q_2(\sqrt{-3})$, which is obtained by adjoining a third root of unity to $\Q_2$. The minimal polynomial is therefore $x^2+x+1,$ and this polynomial is irreducible over $\F_2,$ so the associated residue field extension is of degree 2 over $\F_2$.
We conclude that 
$$
H_5 = 2\hat{\Z}\times \Z_2\times \Z/2\Z.
$$
Suppose $H_{-5}$ contains the torsion subgroup $\{0\}\times \{0\} \times \Z/2\Z.$ Then 
$$
\{0\}\times \{0\} \times \Z/2\Z \subseteq H_{-5} \cap H_{5} \subseteq H_{-1}.
$$
This is a contradiction. It follows that $\chi_{-5}|_{G_2}$ does not vanish on the torsion subgroup, so neither does $\chi_{w}|_{G_2}$. By Lemma \ref{N_2}, the proof is now finished $w\equiv -5 \mod 8$ as well.
\end{proof}

\begin{lemma}
Suppose $f^{(w)}(\sigma_p) \neq 0.$ Then 
$$ \dim H^1_\mathcal{N}(G_{S\cup \{ p \}}, Ad(\overline{\rho}))
=
\dim H^1_\mathcal{N}(G_{S}, Ad(\overline{\rho})),
$$
and
$$
H^1_\mathcal{N}(G_{S\cup \{ p \}}, Ad(\overline{\rho})) =\text{\upshape{span}}_{\F_4} \{  
\chi_2 I, f^{(v)}
\}.
$$
\end{lemma}
\begin{proof} Assume that $f^{(w)}(\sigma_p) \neq 0.$ 
By Lemma \ref{globalreciprocity}, it follows that 
$f^{(p)}(\sigma_w) \neq 0.$
We may normalize $f^{(p)}$ so that 
$$
f^{(p)}(\sigma_w) =
\begin{pmatrix}
 & 1 \\
 &
\end{pmatrix}.
$$
We have already seen that 
$$
H^1_\mathcal{N}(G_{S\cup \{ p \}}, Ad(\overline{\rho})) \supseteq \operatorname{span}_{\F_4} \{  
\chi_2 I, f^{(v)} 
\}.
$$
Let $h \in H^1_\mathcal{N}(G_{S\cup \{ p \}}, Ad(\overline{\rho})).$ We may write
$$
h = b f^{(p)} + g + f^{(S)},
$$
where $f^{(S)}$ inflates from a cohomology class in $H^1(G_{S}, Ad(\overline{\rho}))$, and 
where $g\in H^1(
G_{S\cup \{ p \}}, 
\begin{pmatrix}
 * & \\
 * & * 
\end{pmatrix}
)$. If $g$ does not inflate from $H^1(G_{S}, Ad(\overline{\rho}))$, then 
$$
h(\tau_p) = 
\begin{pmatrix}
 a & b\\
 c & d 
\end{pmatrix}
$$
where $a, c$ or $d$ is nonzero. But then $h|_{G_p} \notin \mathcal{N}_p$ by Corollary \ref{N_p}. Hence we may assume that $g=0$ and 
$$
h = bf^{(p)} + f^{(S)}.
$$
By assumption, $h|_{G_q} \in \mathcal{N}_q$ for all $q\in S\cup\{p \}$, and it is clear that
$f^{(p)}|_{G_q} \in \mathcal{N}_q$ for all $q\in (S \cup\{p\}) \setminus \{ w \}$. It follows that
$f^{(S)}|_{G_q} \in \mathcal{N}_q$ for all $q\in (S \cup\{p\}) \setminus \{ w \}$.
In particular, $f^{(S)} \in H^1(G_S, Ad(\overline{\rho}))$ is unramified at $\infty, 7,$ and $l$, so  
$\text{Im}(f^{(S)}|_K)$ is a $\Gal(K/\Q)$-submodule of the maximal 2-elementary abelian extension of $K$ unramified outside $2, v$ and $w.$ It follows that 
$$
f^{(S)} = 
 \sum_{q \in \{2,v,w \} }
 \chi_q
\begin{pmatrix}
 a_q & \\
 & b_q
\end{pmatrix}
+ c_v f^{(v)} + c_w f^{(w)}
$$
for some $a_q, b_q, c_v,c_w  \in \F_4.$
We conclude that 
$$ H^1_\mathcal{N}(G_{S\cup \{ p \}}, Ad(\overline{\rho})) \ni 
h -  c_v f^{(v)}  = bf^{(p)} +  c_w f^{(w)} +\sum_{q \in \{2,v,w \} }
\chi_q
\begin{pmatrix}
 a_q & \\
 & b_q
\end{pmatrix}.
$$
Next, we will show that $a_v =b_v =0.$ Indeed, 
$(b f^{(p)}+c_w f^{(w)})|_{G_v} \in \mathcal{N}_v,$ so 
$$ \sum_{q \in \{2,v,w \} }
\chi_q
\begin{pmatrix}
 a_q & \\
 & b_q
\end{pmatrix} |_{G_v} \in \mathcal{N}_v.
$$
As in Lemma \ref{N_v}, let $\mathcal{N}_v = \operatorname{span} \{ f_v^{(i)}: i=1,\ldots,4 \}$, where
$f_v^{(4)}=g_v^{\operatorname{unr}}$. There are coefficients $\lambda_v^{(i)}$ such that 
$$ \sum_{q \in \{2,v,w \} }
\chi_q
\begin{pmatrix}
 a_q & \\
 & b_q
\end{pmatrix} (\sigma_v)
= \sum_{i=1}^4 \lambda_v^{(i)} f_v^{(i)}(\sigma_v) 
=
\begin{pmatrix}
\lambda_v^{(3)} & \lambda_v^{(1)} \\
\lambda_v^{(4)} & \lambda_v^{(3)}
\end{pmatrix}.
$$
It follows that $\lambda_v^{(4)}=0.$ Also,
$$ \sum_{q \in \{2,v,w \} }
\chi_q
\begin{pmatrix}
 a_q & \\
 & b_q
\end{pmatrix} (\tau_v)
=
\begin{pmatrix}
a_v & \\
& b_v
\end{pmatrix}
= \sum_{i=1}^4 \lambda_v^{(i)} f_v^{(i)}(\tau_v) 
=
\begin{pmatrix}
\lambda_v^{(4)}  & \lambda_v^{(2)} \\
 & \lambda_v^{(4)} 
\end{pmatrix}.
$$
Hence $a_v = b_v = 0.$ 
We conclude that 
$$ H^1_\mathcal{N}(G_{S\cup \{ p \}}, Ad(\overline{\rho})) \ni 
h -  c_v f^{(v)}  = b f^{(p)} + c_w f^{(w)} +  \sum_{q \in \{2,w \} }
\chi_q
\begin{pmatrix}
 a_q & \\
 & b_q
\end{pmatrix}.
$$
As in Lemma \ref{N_w}, let $\mathcal{N}_w = \operatorname{span} \{ f_w^{(i)}: i=1,\ldots,4 \}$, where $f^{(4)}_w = g_w^{\operatorname{ram}}$.
There are coefficients $\lambda_w^{(i)}$ such that 
$$
b f^{(p)}|_{G_w} +  \sum_{q \in \{2,w \} }
\chi_q
\begin{pmatrix}
 a_q & \\
 & b_q
\end{pmatrix}|_{G_w}
=\sum_{i=1}^4 \lambda_w^{(i)} f_w^{(i)}.
$$
Since $w \equiv 3 \mod 4$, $\chi_2(\sigma_w)=1$, so
$$
bf^{(p)}(\sigma_w) +  \sum_{q \in \{2,w \} }
\chi_q
\begin{pmatrix}
 a_q & \\
 & b_q
\end{pmatrix} (\sigma_w)
=
\begin{pmatrix}
  a_2 & b \\
  & b_2
\end{pmatrix} 
=
\begin{pmatrix}
  \lambda_w^{(3)} & \lambda_w^{(4)} \\
 \lambda_w^{(1)} & \lambda_w^{(3)}
\end{pmatrix} 
$$
We conclude that $a_2 = b_2$ and $b= \lambda_w^{(4)}$. Then 
$$
b f^{(p)}(\tau_w) +  \sum_{q \in \{2,w \} }
\chi_q
\begin{pmatrix}
 a_q & \\
 & b_q
\end{pmatrix} (\tau_w)
=
\begin{pmatrix}
  a_w &  \\
  & b_w
\end{pmatrix} 
=
\begin{pmatrix}
  \lambda_w^{(4)}  &  \\
 \lambda_w^{(2)} & \lambda_w^{(4)} 
\end{pmatrix}. 
$$
This shows that $a_w = b_w = \lambda_w^{(4)}  = b $. 
In total,
$$ H^1_\mathcal{N}(G_{S\cup \{ p \}}, Ad(\overline{\rho})) \ni 
h -  c_v f^{(v)}  - a_2\chi_2 I = b f^{(p)} + c_w f^{(w)} + b \chi_w I.
$$
Since $f^{(p)}|_{G_2}  \in \mathcal{N}_2$, then also 
$(c_w f^{(w)} + b \chi_w I)|_{G_2}  \in \mathcal{N}_2.$ 
Since $$
c_w f^{(w)}|_{G_2} \in 
H^1(G_2, 
\begin{pmatrix}
    & \\
  *  &
\end{pmatrix}),
$$
it is a consequence of 
Lemma \ref{N_2} that $c_w=0.$
By Lemma \ref{lemmaQ_2}, 
$\chi_w I|_{G_2}  \notin \mathcal{N}_2.$ We conclude that $b=0.$ This concludes the proof.
\end{proof}

\begin{theorem}
There exist infinitely many trivial primes $p$ such that when we allow ramification at $p$,
the rank of the Selmer group increases by one, i.e.
$$ \dim H^1_\mathcal{N}(G_{S\cup \{ p \}}, Ad(\overline{\rho}))
=
\dim H^1_\mathcal{N}(G_{S}, Ad(\overline{\rho})) + 1.
$$
More precisely,
$$ \dim H^1_\mathcal{N}(G_{S\cup \{ p \}}, Ad(\overline{\rho}))
=
\begin{cases}
\dim H^1_\mathcal{N}(G_{S}, Ad(\overline{\rho})) + 1, & \text{if } f^{(w)}(\sigma_p) =0, \\
\dim H^1_\mathcal{N}(G_{S}, Ad(\overline{\rho})), &  \text{if }f^{(w)}(\sigma_p) \neq 0. \\
\end{cases}
$$
\end{theorem}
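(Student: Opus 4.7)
The dichotomy is already packaged by the two preceding lemmas: if $f^{(w)}(\sigma_p) = 0$ the Selmer rank increases by one, and otherwise it is unchanged. What remains is to establish the infinitude of primes $p$ for which the rank actually rises, i.e.\ for which $f^{(w)}(\sigma_p) = 0$.

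My plan is to collect all conditions on $p$ into a single Chebotarev condition in a suitable compositum. To apply the lemmas, we need $p$ to be trivial and to witness a Gras--Munnier field $K^{(p)}$ with $\Gal(K^{(p)}/K) \simeq \F_4(\psi^{-1})$ as a $\Gal(K/\Q)$-module; by Lemma \ref{v,w} this is equivalent to prescribing the Frobenius $\sigma_{\mathfrak{p}}$ at some prime $\mathfrak{p}$ of $K$ above $p$ to be $(1,0) \in \F_2 \oplus U_2 = \Gal(K(\sqrt{U_K})/K)$, which simultaneously forces $p \equiv 3 \mod 4$ and $p$ to split completely in $K$. The remaining condition $f^{(w)}(\sigma_p) = 0$ amounts to $p$ splitting completely in $K^{(w)}$.

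I would let $L := K(\sqrt{U_K}) \cdot K^{(w)}$ and encode the full list of conditions as a prescription of a conjugacy class $C \subseteq \Gal(L/\Q)$ for the Frobenius at $p$. The governing field $K(\sqrt{U_K})/K$ is everywhere unramified, while $K^{(w)}/K$ is ramified at the primes above $w$; hence no non-trivial subextension is shared over $K$, making the two fields linearly disjoint over $K$ and the class $C$ non-empty. Chebotarev's density theorem then supplies infinitely many primes $p$ whose Frobenius lies in $C$, and every such $p$ raises the Selmer rank by one.

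The main obstacle is verifying that this combined Chebotarev condition is genuinely non-empty, which reduces to confirming the linear disjointness over $K$ between the governing field and $K^{(w)}$, together with checking that the specified Frobenius type in the governing field is compatible with total splitting in $K^{(w)}$. Once that bookkeeping is done, infinitude follows from a clean application of Chebotarev's theorem to $\Gal(L/\Q)$.
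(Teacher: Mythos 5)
Your approach is essentially the paper's: the dichotomy is exactly the content of the two preceding lemmas, and the infinitude comes from a Chebotarev argument for the single class of primes that are trivial, have $\sigma_{\mathfrak{p}}=(1,0)$ in $\Gal(K(\sqrt{U_K})/K)$, and split completely in $K^{(w)}$ --- precisely the compositum argument the paper carries out in its final density section, where the relevant element is central in $\Gal(K(\sqrt{U_K})K^{(w)}/\Q)$ and so forms a singleton conjugacy class. One correction to your justification of linear disjointness: the governing field $K(\sqrt{U_K})/K$ is \emph{not} everywhere unramified; it is ramified at the primes above $2$ and at infinite places (indeed, since $\cl_K$ is trivial, $K$ admits no nontrivial everywhere-unramified extension, so a degree-$8$ unramified extension is impossible). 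The disjointness still holds, but for the complementary reason: any subextension of $K^{(w)}/K$ is totally real and unramified outside the primes above $w$, while $K(\sqrt{U_K})/K$ is unramified at the (odd) primes above $w$; hence a common subfield is unramified at every finite place and at infinity, and triviality of the class group forces it to be $K$. With that repair, your non-emptiness check goes through and the Chebotarev application is exactly as in the paper.
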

\begin{proof} The changes in the Selmer rank depending on whether or not $p$ splits completely in $K^{(w)}$ follow from our work above. 
The primes $p$ arise from a finite list of compatible abelian Chebotarev conditions on degree one primes in $K$.  
\end{proof}

\section{Lowering the Selmer rank}
In the previous section, we showed how to raise the Selmer rank by allowing ramification at a new trivial prime $p$. In this section, we examine the possibility of lowering the Selmer rank by changing bases locally at trivial primes. 
Let $\omega \in \operatorname{GL}(2,\F_4)$. If $p$ is a trivial prime, then $\overline{\rho}|_{G_p}$ is trivial and therefore unaffected by a change of basis: 
$$
\omega \overline{\rho}|_{G_p} \omega^{-1}= \overline{\rho}|_{G_p}.
$$
In \cite{HR}, it is shown how it is possible to lower the Selmer rank by changing bases locally at trivial primes. 
Suppose we change basis locally at $p$ by $\omega \in GL(2,\W)$, and set 
$$
\mathcal{C}_p^{(\omega)} = \omega \mathcal{C}_p \omega^{-1}, \quad 
\mathcal{N}_p^{(\omega)} = \omega \mathcal{N}_p \omega^{-1}.
$$
\begin{lemma}
Let
$$
\omega = 
\begin{pmatrix}
& 1\\
1 &
\end{pmatrix},
$$
and suppose we change bases by $\omega$ at $v$.
Then 
$$\dim H^1_{\mathcal{N}^{(\omega)}}(G_{S}, Ad(\overline{\rho}))=1,$$
and $ H^1_{\mathcal{N}^{(\omega)}}(G_{S}, Ad(\overline{\rho})) = \text{\upshape{span}}_{\F_4} \{ \chi_2 I\}.$
\end{lemma}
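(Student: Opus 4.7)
The plan is to prove containment in both directions: show $\chi_2 I$ lies in the modified Selmer group, and then show that any class in the modified Selmer group is forced to be a scalar multiple of $\chi_2 I$. The key structural observation that drives everything is that conjugation by $\omega$ swaps the $(1,2)$ and $(2,1)$ entries of a matrix while fixing the diagonal, so $\omega I \omega^{-1} = I$. Consequently any cohomology class whose local image at $v$ or $w$ is scalar-valued is unaffected by the base change, which immediately shows $\chi_2 I \in H^1_{\mathcal{N}^{(\omega)}}(G_S, Ad(\overline{\rho}))$, giving the lower bound $\dim \geq 1$.

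For the upper bound, I would re-run the Jordan--H\"older decomposition from the proof that $\dim H^1_\mathcal{N}(G_S, Ad(\overline{\rho})) = 3$. Any class $f$ in the modified Selmer group cuts out an extension $K_f$ of $K$ inside the same maximal $2$-elementary abelian extension $M/K$ unramified outside $\{2,v,w\}$ (since local ramification conditions at the unchanged primes are untouched, and only the shape of the $v,w$-local conditions differs). Therefore $f$ still decomposes as $f = f_\psi + f_{\text{diag}} + f_{\psi^{-1}}$, with $f_\psi \in \F_4 \cdot f^{(w)}$, $f_{\psi^{-1}} \in \F_4 \cdot f^{(v)}$, and $f_{\text{diag}} = \sum_{q\in\{2,v,w\}}\chi_q \begin{pmatrix} a_q & \\ & b_q \end{pmatrix}$.

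Next I would show $f_\psi = f_{\psi^{-1}} = 0$. The bases $\mathcal{N}_v^{(\omega)} = \omega \mathcal{N}_v \omega^{-1}$ and $\mathcal{N}_w^{(\omega)} = \omega \mathcal{N}_w \omega^{-1}$ are obtained from Lemma \ref{N_v} and Lemma \ref{N_w} by transposing the off-diagonal entries in each basis element. The normalized $f^{(v)}$ sends $\tau_v$ to $\begin{pmatrix} & 1 \\ & \end{pmatrix}$, but no element of $\mathcal{N}_v^{(\omega)}$ has a purely upper-right value on $\tau_v$ (all $\tau_v$ images in the conjugated basis are either $0$, lower-left, or the identity). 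Solving the linear system for membership forces the coefficient of $f^{(v)}$ to vanish. A symmetric computation at $w$ kills the coefficient of $f^{(w)}$, so $f = f_{\text{diag}}$.

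Finally I would analyze $f_{\text{diag}}$ under the new local conditions. At $\tau_v$ the diagonal value $\begin{pmatrix} a_v & \\ & b_v \end{pmatrix}$ must come from $\mu_4 I + \mu_2 \begin{pmatrix} & \\ 1 & \end{pmatrix}$ in $\mathcal{N}_v^{(\omega)}$, so $a_v = b_v = \mu_4$; checking the $\sigma_v$-equation then forces $\mu_4 = 0$, hence $a_v = b_v = 0$. The analogous argument at $w$ gives $a_w = b_w = 0$. What remains is $f = \chi_2 \begin{pmatrix} a_2 & \\ & b_2\end{pmatrix}$, and evaluating the $\sigma_w$-condition in $\mathcal{N}_w^{(\omega)}$ forces $c_w(a_2 - b_2) = 0$ where $c_w = \chi_2(\sigma_w)$. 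Since $w = 379 \equiv 3 \pmod 8$ we have $c_w = 1$, so $a_2 = b_2$ and $f \in \F_4 \cdot \chi_2 I$, completing the upper bound.

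The main obstacle is purely bookkeeping: keeping the conjugated basis elements straight and systematically reading off the linear constraints they impose on the diagonal and off-diagonal pieces of $f$. Once the local systems at $v$ and $w$ are written out, the argument collapses to the same global classification used in the rank-$3$ computation, with the transposed off-diagonal bases killing precisely the classes $f^{(v)}$ and $f^{(w)}$ that previously spanned the extra two dimensions.
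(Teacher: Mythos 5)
Your proposal is correct and follows essentially the same route as the paper: decompose any class in the modified Selmer group as $c_v f^{(v)} + c_w f^{(w)} + \sum_{q\in\{2,v,w\}}\chi_q\left(\begin{smallmatrix} a_q & \\ & b_q\end{smallmatrix}\right)$ using the rank-three computation, then solve the linear systems against the conjugated bases $\omega \mathcal{N}_v \omega^{-1}$ and $\omega \mathcal{N}_w \omega^{-1}$ at $\tau_v,\sigma_v,\tau_w,\sigma_w$ to force $c_v=c_w=0$, $a_v=b_v=a_w=b_w=0$, and $a_2=b_2$. Your explicit remarks that $\omega$ fixes scalar matrices (so $\chi_2 I$ survives the base change) and that $\chi_2(\sigma_w)=1$ because $w\equiv 3 \bmod 8$ are points the paper leaves implicit, but they are refinements of, not departures from, its argument.
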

\begin{proof}
By Lemma \ref{N_v} and Lemma \ref{N_w}, it follows that $f^{(v)}|_{G_v} \notin \mathcal{N}_v^{(\omega)}$. We will show that $\chi_2 I$ is the only element that remains in the Selmer group. 
Let $h\in H^1_\mathcal{N}(G_{S}, Ad(\overline{\rho}))$. 
By Lemma \ref{rayclass} and arguments similar to those in the proof of Proposition \ref{Selmer_S}
$$ 
h =  c_v f^{(v)} + c_w f^{(w)}  +  \sum_{q \in \{2,v,w \} }
\chi_q
\begin{pmatrix}
 a_q & \\
 & b_q
\end{pmatrix}.
$$
By Lemma \ref{N_2}, $c_w=0.$
As in Lemma \ref{N_v}, let  $\mathcal{N}_v^{(\omega)} = \text{span} \{ \omega f_v^{(i)} \omega^{-1} : i=1,\ldots,4 \}$. Let $\lambda_v^{(i)} \in \F_4$ be such that 
$$h|_{G_v}=\sum_{i=1}^4 \lambda_v^{(i)} \omega f_v^{(i)} \omega^{-1}.$$ 
Then
$$ 
h(\tau_v)
=
\begin{pmatrix}
 a_v & c_v \\
 & b_v
\end{pmatrix}
=
\begin{pmatrix}
\lambda_v^{(4)}  &  \\
\lambda_v^{(2)} & \lambda_v^{(4)} 
\end{pmatrix}.
$$
Hence $c_v=0,$ and $a_v = b_v = \lambda_v^{(4)}.$ 
Consequently, 
$$ 
h(\sigma_v)
= 
\begin{pmatrix}
 a_2 &  \\
 & b_2
\end{pmatrix} 
+\chi_w(\sigma_v)
\begin{pmatrix}
 a_w &  \\
 & b_w
\end{pmatrix}
=
\begin{pmatrix}
\lambda_v^{(3)}  & \lambda_v^{(4)}  \\
\lambda_v^{(1)} & \lambda_v^{(3)}
\end{pmatrix}.
$$
Thus $\lambda_v^{(4)}=0,$ so $a_v = b_v =0$, and 
$$ 
h =    \sum_{q \in \{2,w \} }
\chi_q
\begin{pmatrix}
 a_q & \\
 & b_q
\end{pmatrix}.
$$
As in Lemma \ref{N_w},  let $\mathcal{N}_w^{(\omega)} = \text{span} \{ \omega f_w^{(i)} \omega^{-1} : i=1,\ldots,4 \}$ and let $\lambda_w^{(i)} \in \F_4$ be such that 
$$h|_{G_w}=\sum_{i=1}^4 \lambda_w^{(i)} \omega f_w^{(i)} \omega^{-1}.$$ 
Then
$$ 
h(\tau_w)
=
\begin{pmatrix}
 a_w &  \\
  & b_w
\end{pmatrix}
=
\begin{pmatrix}
\lambda_w^{(4)} & \lambda_w^{(2)} \\
 & \lambda_w^{(4)}
\end{pmatrix}.
$$
Hence $a_w = b_w = \lambda_w^{(4)} .$ It follows that
$$ 
h(\sigma_w)
= 
\begin{pmatrix}
 a_2 &  \\
 & b_2
\end{pmatrix} 
=
\begin{pmatrix}
\lambda_w^{(3)}  & \lambda_w^{(1)}  \\
\lambda_w^{(4)} & \lambda_w^{(3)}
\end{pmatrix}.
$$
Hence $\lambda_w^{(4)}=0$, so $a_w = b_w = \lambda_w^{(4)} = 0,$ while $a_2 = b_2 =\lambda_w^{(3)}.$ 
We conclude that
$$
h = a_2 \chi_2 I,
$$
and this concludes the proof.
\end{proof}

As a result of working with the full adjoint, our space $\mathcal{N}_p$ is one dimension larger than in \cite{HR}; the extra dimension comes from 
the cohomology class 
$$
f_p^{(3)}: \sigma_p \mapsto I, \tau_p \mapsto 0,
$$
which is contained in $\mathcal{N}_p$ regardless of whether $p$ ramifies in $\rho_2$ or not.
The class $f_p^{(3)}$ will always remain in $\mathcal{N}_p$ after any change of basis at $p$. Indeed, for any $\omega \in GL(2, \F_4),$ and any trivial prime $p$,
$$
\chi_2 I|_{G_p} = \chi_2(\sigma_p) f_p^{(3)} = \chi_2(\sigma_p) \omega f_p^{(3)} \omega^{-1} \in \mathcal{N}_p^{(\omega)}.
$$
So the cohomology class $\chi_2 I$ will always remain in the Selmer group after changing basis by $\omega$ at the trivial primes, and this keeps the Selmer rank at least one. 

\section{Density of Selmer ranks in families}
\begin{theorem} \label{density1}
Let $K(\sqrt{U_K})$ be the governing field of $K$ with 
$$
\Gal(K(\sqrt{U_K})/K) = \F_2 \oplus U_2
$$
as $\F_2[\Gal(K/\Q)]$-modules. 
The density of primes $p$ such that $p$ is trivial and such that 
$\sigma_{\mathfrak{p}} =(1,0) \in \F_2 \oplus U_2$ for some $\mathfrak{p}$ dividng $p$ in $K$ is equal to
1/24. 
\end{theorem}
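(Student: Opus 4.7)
The plan is to express the claim as a Chebotarev density condition in the single Galois extension $L := K(\sqrt{U_K})$ of $\Q$, i.e.\ the governing field. First I would verify that $L/\Q$ is Galois, which follows because $U_K \subset K^\times$ is stable under $\Gal(K/\Q)$. Since $|\Gal(L/K)| = 8$ is coprime to $|\Gal(K/\Q)| = 3$, the Schur--Zassenhaus theorem gives a splitting
$$
G := \Gal(L/\Q) = (\F_2 \oplus U_2) \rtimes \Gal(K/\Q),
$$
where, as already identified in the paper, the $\Z/3$-factor acts trivially on the $\F_2$-summand and via the nontrivial irreducible representation on $U_2$. In particular $|G| = 8\cdot 3 = 24$.

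The decisive observation is that $(1,0) \in \F_2 \oplus U_2$ is central in $G$: it commutes with the abelian normal subgroup $\Gal(L/K)$, and it is fixed by the $\Z/3$-action, hence commutes with every lift of a generator of $\Gal(K/\Q)$. Its $G$-conjugacy class is therefore the singleton $\{(1,0)\}$. For a prime $p$ unramified in $L$, the condition $\mathrm{Frob}_p = (1,0)$ in $G$ automatically implies $p$ splits completely in $K$ (because $(1,0) \in \Gal(L/K)$) and that $p \equiv 3 \bmod 4$ (because $(1,0)$ has nontrivial image in $\Gal(K(i)/K)$, the subfield of $L$ fixed by $U_2$ being the base change of $\Q(i)$ to $K$, as noted earlier in the paper). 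Thus such a $p$ is automatically trivial. Moreover, by the centrality of $(1,0)$, the Frobenius $\sigma_\mathfrak{p}$ for every prime $\mathfrak{p}\mid p$ in $K$ equals the single element $(1,0)$, and conversely any $p$ for which some $\sigma_\mathfrak{p} = (1,0)$ has Frobenius class $\{(1,0)\}$ in $G$. In short, the condition in the theorem translates exactly into the Frobenius condition $\mathrm{Frob}_p = (1,0)$ in $G$.

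Applying Chebotarev's density theorem to the singleton conjugacy class $\{(1,0)\} \subseteq G$ now yields density $1/|G| = 1/24$, as claimed. There is no real obstacle: the main content is the group-theoretic setup of $G$ and the observation that $(1,0)$ is central, after which Chebotarev does the work.
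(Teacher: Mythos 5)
Your proposal is correct and follows essentially the same route as the paper: identify $\Gal(K(\sqrt{U_K})/\Q)$ as $(\F_2\oplus U_2)\rtimes \Z/3$, observe that $(1,0)$ is central (hence a singleton conjugacy class), translate the trivial-prime and Frobenius conditions into the condition $\mathrm{Frob}_p=(1,0)$, and apply Chebotarev to get density $1/24$. Your explicit handling of the $p\equiv 3\bmod 4$ condition via the subfield fixed by $U_2$ is a point the paper settles earlier in Section 5 rather than inside the proof, but the argument is the same.
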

\begin{proof}
We apply Chebotarev's theorem to the extension $K(\sqrt{U_K})/\Q$. Let $\varphi$ denote the action of 
$\Z/3\Z$ on $\Gal(K(\sqrt{U_K})/K)= \F_2 \oplus U_2.$
The group
$\Gal(K(\sqrt{U_K})/\Q)$ is isomorphic to the semidirect product
$$
0 \to \F_2 \oplus U_2 \to \Big( \F_2 \oplus U_2 \Big) \rtimes_\varphi \Z/3\Z \stackrel{\pi}{\to} \Z/3\Z \to 0.
$$
The element $(1,0)\rtimes_\varphi 0$ lies in the centre of $\Gal(K(\sqrt{U_K})/\Q)$. Indeed, if 
$(a,b)\rtimes_\varphi n$ is an arbitrary element in $\F_2 \oplus U_2 \rtimes_\varphi \Z/3\Z$,
we find that
\begin{IEEEeqnarray*}{rCl}
\Big( (a,b)\rtimes_\varphi n \Big) \cdot \Big( (1,0)\rtimes_\varphi 0 \Big) 
&=& ( (a,b) + \varphi(n) (1,0)  ) \rtimes_\varphi (n+0) \\
&=& ((a,b)+(1,0))\rtimes_\varphi n
\end{IEEEeqnarray*}
since $\Z/3\Z$ acts trivially on $(1,0) \in \F_2 \oplus U_2.$ On the other hand,
\begin{IEEEeqnarray*}{rCl}
 \Big( (1,0)\rtimes_\varphi 0 \Big) \cdot 
\Big( (a,b)\rtimes_\varphi n \Big)
&=& ( (1,0) + \varphi(0) (a,b)  ) \rtimes_\varphi (0+n) \\
&=& ((1,0)+(a,b))\rtimes_\varphi n.
\end{IEEEeqnarray*}
It follows that if $C$ denotes the conjugacy class in $\Gal(K(\sqrt{U_K})/\Q)$ containing the element 
$(1,0)\rtimes_\varphi 0$, then $$C=\{ (1,0)\rtimes_\varphi 0\}.$$
The set of primes $p$ in $\Q$ such that the Frobenius conjugacy class $(p, K(\sqrt{U_K})/\Q)$ is equal to $C$ is the same as the set of primes $p$ such that 
$$\sigma_{\overline{p}/p} = (1,0)\rtimes_\varphi 0$$
for any prime $\overline{p}$ in $K(\sqrt{U_K})$ lying over $p$. By Chebotarev's theorem, the natural density of such primes is equal to $1/24$. 
Note that $\pi (p, K(\sqrt{U_K})/\Q) $ is equal to the Frobenius element $(p, K/\Q)$.
Hence, a prime $p$ splits completely in $K$ if and only if $\pi  (p, K(\sqrt{U_K})/\Q) = 0$.
Let $\mathfrak{p}$ be a prime in $K$ below $\overline{p}$. If $p$ is a prime that splits completely in $K$, then 
$$
\sigma_{\overline{p}/p} = (1,0)\rtimes_\varphi 0
$$
if and only if $\sigma_{\overline{p}/\mathfrak{p}} = (1,0)$ in $\F_2\oplus U_2.$
The theorem follows.
\end{proof}

\begin{theorem}For a prime $p$, let $\mathfrak{p}$ be a prime in $K$ above $p$, and let 
$\sigma_\mathfrak{p}$ denote the Froenius element $(\mathfrak{p}, K(\sqrt{U_K})/K)$. 
The density of primes $p$ such that 
\begin{itemize}
\item $p$ splits completely in $K^{(w)}$ 
\item The Frobenius element $\sigma_\mathfrak{p} = (1,0) $ for any $\mathfrak{p}$ dividing $p$ in $K$ 
\item p remains prime in $\Q(\sqrt{l})$ 
\end{itemize}
is equal to 1/192. The first two conditions imply in particular that $p$ is trivial. 
\end{theorem}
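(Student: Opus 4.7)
The plan is to apply Chebotarev's density theorem to the compositum
$$
L := K(\sqrt{U_K}) \cdot K^{(w)} \cdot \Q(\sqrt{l}),
$$
extending the proof of Theorem \ref{density1} by incorporating the two additional splitting conditions. The factorization $192 = 3 \cdot 8 \cdot 4 \cdot 2 = [K:\Q] \cdot [K(\sqrt{U_K}):K] \cdot [K^{(w)}:K] \cdot [\Q(\sqrt{l})K:K]$ suggests that once linear disjointness of the three extensions over $K$ is established, the density $1/192$ should arise from a singleton Frobenius conjugacy class in $\Gal(L/\Q)$.

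The first step is to verify that $K(\sqrt{U_K})$, $K^{(w)}$, and $\Q(\sqrt{l}) \cdot K$ are pairwise linearly disjoint over $K$, so that $[L:\Q] = 192$. The extension $K(\sqrt{U_K})/K$ is unramified at every finite prime not lying above $2$, since every $u \in U_K$ has valuation $0$ at such primes. By construction $K^{(w)}/K$ is unramified outside the primes above $w = 379$, and $\Q(\sqrt{l}) \cdot K / K$ is unramified outside the primes above $l = 17$. Since $2$, $17$, $379$ are pairwise distinct rational primes, the intersection of any two of the three fields is an extension of $K$ unramified at every finite prime, and hence equals $K$ by triviality of the class group of $K$.

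The second step is to describe $\Gal(L/\Q)$ as the fiber product, over $\Gal(K/\Q) = \Z/3$, of $\Gal(K(\sqrt{U_K})/\Q)$, $\Gal(K^{(w)}/\Q)$, and $\Gal(\Q(\sqrt{l}) K / \Q)$. In this description, the three conditions in the theorem uniquely determine a single element $\sigma \in \Gal(L/\Q)$: its image in $\Gal(K^{(w)}/\Q)$ must be the identity (since $p$ splits completely in $K^{(w)}$), its image in $\Gal(K(\sqrt{U_K})/\Q)$ must be $(1,0) \rtimes 0$ (as in Theorem \ref{density1}), and its image in $\Gal(\Q(\sqrt{l}) K / \Q) \simeq \Z/3 \times \Z/2$ must be the element trivial on $K$ and non-trivial on $\Q(\sqrt{l})$. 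Each restriction projects trivially to $\Gal(K/\Q)$, so the triple is compatible in the fiber product.

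Finally, I would verify that the conjugacy class of $\sigma$ in $\Gal(L/\Q)$ is the singleton $\{\sigma\}$. Each component is already a singleton conjugacy class: the first trivially, the second by the central computation carried out in the proof of Theorem \ref{density1}, and the third because $\Gal(\Q(\sqrt{l}) K / \Q)$ is abelian. Since centrality in each factor implies centrality in the fiber product, Chebotarev's density theorem yields the density $1/|\Gal(L/\Q)| = 1/192$. The main obstacle is the linear disjointness step, which hinges on matching the ramification loci of the three auxiliary extensions against the trivial class group of $K$; after that, the argument is a purely formal tracking of central Frobenius elements in a fiber-product Galois group.
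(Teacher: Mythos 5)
Your route is the same as the paper's: apply Chebotarev to the compositum $L=K(\sqrt{U_K})\,K^{(w)}\,K(\sqrt{l})$, check that the three conditions pin down a single element of $\Gal(L/\Q)$ which is central (componentwise, exactly as in Theorem \ref{density1}), and conclude that the density is $1/[L:\Q]=1/192$. The paper simply asserts the order-$192$ semidirect-product structure $(\F_2\oplus U_2\oplus U_2^{(w)}\oplus\F_2)\rtimes_\varphi\Z/3\Z$ without comment, whereas you try to justify the degree count, and that is the one place where your write-up has a genuine logical slip: pairwise linear disjointness over $K$ does not imply $[L:K]=8\cdot4\cdot2$. (Compare $\Q(\sqrt{2}),\Q(\sqrt{3}),\Q(\sqrt{6})$: pairwise disjoint over $\Q$, but the compositum has degree $4$, not $8$.) What you need is that each of the three fields meets the compositum of the other two in $K$.

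Fortunately your own ramification argument proves this stronger statement with no extra work, because the ramification loci are pairwise disjoint: for instance $K^{(w)}\cap\bigl(K(\sqrt{U_K})\,K(\sqrt{l})K\bigr)$ is a $2$-elementary abelian extension of $K$ unramified at every finite prime (the left field ramifies only above $w=379$, the right only above $2$ and $l=17$), and since it sits inside the totally real field $K^{(w)}$ it is unramified at the infinite places as well, so triviality of $\cl_K$ forces it to be $K$; the same argument handles $K(\sqrt{l})K$ against the other two. Do keep the infinite places explicit when you invoke the class group: $K(\sqrt{U_K})$ contains $K(\sqrt{-1})$, so "unramified at all finite primes'' alone would require the narrow class group, and it is the total realness of $K^{(w)}$ and $K(\sqrt{l})K$ that lets class number one suffice. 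With that repair, your fiber-product description of $\Gal(L/\Q)$, the identification of the required Frobenius element, and the observation that centrality in each factor gives centrality in the fiber product are all correct, and the proof matches the paper's.
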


\begin{proof}
The proof is similar to the proof of Theorem \ref{density1}. We apply Chebotarev's theorem to the extension $K(\sqrt{U_K}) K^{(w)} K(\sqrt{l})/\Q$. 
Let $U_2^{(w)}$ denote $\Gal(K^{(w)}/K)$ as a $\Gal(K/\Q)$-module.
The Galois group
$\Gal(K(\sqrt{U_K}) K^{(w)} K(\sqrt{l})/\Q)$ is isomorphic to the semidirect product
$$
0 \to \F_2 \oplus U_2 \oplus U_2^{(w)} \oplus \F_2 \to \F_2 \oplus U_2 \oplus U_2^{(w)} \oplus \F_2 \rtimes_\varphi \Z/3\Z \stackrel{\pi}{\to} \Z/3\Z \to 0.
$$
The element $(1,0,0,1) \rtimes_\varphi 0$ lies in the center.
A prime $p$ splits completely in $K^{(w)}$, has $(\mathfrak{p}, K(\sqrt{U_K})/K) = (1,0) $, and remains prime in $\Q(\sqrt{l})$ if and only if 
$$
(p, K(\sqrt{U_K}) K^{(w)} K(\sqrt{l})/\Q) = \{(1,0,0,1) \rtimes_\varphi 0 \}.
$$
By Chebotarev's theorem, the density of such primes is 1/192.
\end{proof}

\begin{corollary} \label{1/192}
The density of primes $p$ such that  $\rho_2|_{G_p} \in \mathcal{C}_p$ and 
$$ \dim H^1_\mathcal{N}(G_{S\cup \{ p \}}, Ad(\overline{\rho}))
=
\dim H^1_\mathcal{N}(G_{S}, Ad(\overline{\rho})) + 1
$$
is 1/192.
\end{corollary}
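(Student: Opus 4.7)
The plan is to identify the set of primes $p$ appearing in the corollary with the set whose density was just computed to be $1/192$ in the preceding theorem. The identification has two halves: translate the Selmer-rank-increase condition into Chebotarev conditions, and translate the local-form condition $\rho_2|_{G_p}\in\mathcal{C}_p$ into one more Chebotarev condition.

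First I would invoke the main theorem on Selmer ranks in families to see that the rank increases by one exactly when $p$ is a trivial prime with $\sigma_\mathfrak{p}=(1,0)\in\F_2\oplus U_2$ and $f^{(w)}(\sigma_p)=0$ (equivalently, $p$ splits completely in $K^{(w)}$). This recovers the first two Chebotarev conditions of the preceding density theorem. Next, I would compute $\rho_2|_{G_p}=(I+2(f^{(v)}+f^{(w)}))\,\xi_2|_{G_p}$ explicitly to pin down the third. For a trivial $p$ one has $\widetilde{\psi}(\sigma_p)=1$, so $\xi_2(\sigma_p)=\mathrm{diag}(1+2\chi_l(\sigma_p),\,1)$ and $\xi_2(\tau_p)=I$; a direct mod-$4$ matrix calculation gives
$$\rho_2(\sigma_p)\equiv\begin{pmatrix}1+2\chi_l(\sigma_p)&2f^{(v)}(\sigma_p)\\2f^{(w)}(\sigma_p)&1\end{pmatrix}\pmod 4,\qquad \rho_2(\tau_p)=I.$$
Comparing this with the prescribed shape $\sigma_p\mapsto\begin{pmatrix}p(1+z)&x\\0&1+z\end{pmatrix}$ defining $\mathcal{C}_p$ (with $p\equiv 3\pmod 4$ and $x,z\in 2\W/4\W$), the lower-left entry vanishes iff $f^{(w)}(\sigma_p)=0$, while the diagonal then matches iff $z=0$ and $\chi_l(\sigma_p)=1$. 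The latter says $p$ remains prime in $\Q(\sqrt{l})$, which is exactly the last condition of the preceding theorem.

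Combining the two halves, a prime $p$ satisfies the hypotheses of the corollary if and only if it satisfies all three Chebotarev conditions of the preceding theorem; the density is therefore $1/192$. I do not anticipate a serious obstacle: the only real step is the mod-$4$ matrix identity above, and everything else is a direct citation of the work of the preceding sections.
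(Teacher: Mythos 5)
Your proposal is correct and is essentially the paper's (implicit) argument: the corollary follows by identifying its set of primes with the Chebotarev set of the preceding density theorem, using the families theorem to translate the rank-increase condition and the mod-$4$ computation of $\rho_2|_{G_p}=(I+2(f^{(v)}+f^{(w)}))\xi_2|_{G_p}$ to show that membership in $\mathcal{C}_p$ amounts to $p$ being trivial, split in $K^{(w)}$ (i.e.\ $f^{(w)}(\sigma_p)=0$) and inert in $\Q(\sqrt{l})$. The only caveat is that your ``exactly when'' in the first step is what the families theorem establishes for trivial primes with $\sigma_{\mathfrak{p}}=(1,0)$ in the governing field; the reverse inclusion (no rank increase when $\sigma_{\mathfrak{p}}\neq(1,0)$) is taken for granted here exactly as it is in the paper.
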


\begin{corollary}
The probability that a prime $p$ is trivial and has
$\sigma_{\mathfrak{p}} =(1,0) \in \F_2 \oplus U_2$ for some $\mathfrak{p}$ dividing $p$ in $K$ is equal to 4.166\%. Among such primes $p$, it holds that $\rho_2|_{G_p} \in \mathcal{C}_p$ and 
$$ \dim H^1_\mathcal{N}(G_{S\cup \{ p \}}, Ad(\overline{\rho}))
=
\dim H^1_\mathcal{N}(G_{S}, Ad(\overline{\rho})) + 1
$$
an eighth of the time. 
\end{corollary}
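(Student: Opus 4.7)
The plan is to obtain both assertions of the corollary as direct consequences of the two density statements already proved in this section. For the first assertion I would simply express the density $1/24$ from Theorem~\ref{density1} as a percentage: $1/24 = 0.041\overline{6}$, which yields the stated $4.166\%$.

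For the second assertion I would interpret the preceding density theorem as saying that $1/192$ is the density of primes $p$ simultaneously satisfying (a) $p$ splits completely in $K^{(w)}$, (b) $\sigma_{\mathfrak{p}} = (1,0) \in \F_2 \oplus U_2$ for some $\mathfrak{p} \mid p$ in $K$, and (c) $p$ remains prime in $\Q(\sqrt{l})$. I would then argue that, conditional on (b) (which in particular forces $p \equiv 3 \pmod{4}$ and hence $p$ trivial), conditions (a) and (c) are exactly the extra hypotheses guaranteeing both $\rho_2|_{G_p} \in \mathcal{C}_p$ and the Selmer-rank increase. The equivalence of (a) with the Selmer-rank increase is the content of the main theorem of the preceding section, via the characterization $f^{(w)}(\sigma_p) = 0 \Leftrightarrow p$ splits completely in $K^{(w)}$. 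For the equivalence of (c) with $\rho_2|_{G_p} \in \mathcal{C}_p$, I would recall that $(f^{(v)}+f^{(w)})|_{G_p} \in \mathcal{N}_p$ for any trivial prime $p$ by the analysis of Section~\ref{trivialprimes}, reducing the check to $\xi_2|_{G_p} \in \mathcal{C}_p$. Since $\widetilde{\psi}|_{G_p}$ is trivial whenever $p$ splits completely in $K$, one has
$$
\xi_2(\sigma_p) = \begin{pmatrix} 1 + 2\chi_l(\sigma_p) & 0 \\ 0 & 1 \end{pmatrix},
$$
which matches the form $\begin{pmatrix} p(1+z) & x \\ 0 & 1+z \end{pmatrix}$ prescribed by $\mathcal{C}_p$ mod~$4$ precisely when $p \equiv 1 + 2\chi_l(\sigma_p) \pmod{4}$; since $p \equiv 3 \pmod 4$, this forces $\chi_l(\sigma_p) = 1$, i.e.\ $p$ is inert in $\Q(\sqrt{l})$.

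Once the density-$1/192$ set is thus identified with the subset of the density-$1/24$ set on which both $\rho_2|_{G_p} \in \mathcal{C}_p$ and the Selmer rank strictly increases by one, the desired conditional density is the ratio $\frac{1/192}{1/24} = \frac{1}{8}$. I do not anticipate any substantive obstacle here, as the independence of the three Chebotarev conditions is already implicit in the factorization $24 \cdot 8 = 192$ of Galois group orders used in the proof of the preceding theorem, and the local-liftability verifications have been carried out in earlier sections; the remaining work is purely bookkeeping to match the two lists of conditions.
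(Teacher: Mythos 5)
Your proposal is correct and is essentially the paper's (implicit) argument: the corollary follows immediately from Theorem \ref{density1} and the $1/192$ density theorem once the $1/192$-set is identified with the subset of the $1/24$-set on which $\rho_2|_{G_p}\in\mathcal{C}_p$ and the Selmer rank increases, and then $\frac{1/192}{1/24}=\frac{1}{8}$. One small imprecision: the reduction ``$(f^{(v)}+f^{(w)})|_{G_p}\in\mathcal{N}_p$, hence it suffices to check $\xi_2|_{G_p}\in\mathcal{C}_p$'' is not valid at the mod $4$ level when $f^{(w)}(\sigma_p)\neq 0$, since the relevant basis vector $g_p^{\text{unr}}$ of $\mathcal{N}_p$ only preserves $\mathcal{C}_p^{\text{unr},\geq 3}$ (i.e.\ mod $2^n$ with $n\geq 3$, as in Section \ref{trivialprimes}); however, on the set you are counting, condition (a) forces $f^{(w)}|_{G_p}=0$, so the lower-left perturbation of $\rho_2(\sigma_p)$ vanishes and the joint equivalence you actually use --- (a) and (c) together are equivalent to $\rho_2|_{G_p}\in\mathcal{C}_p$ plus the rank increase --- holds, so the conclusion is unaffected.
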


\begin{corollary}
    The global, even deformation ring 
    $R_{(\mathcal{N}_q)_{q\in S}}$
    parametrizing deformations $\rho$ of 
    $\overline{\rho}$ such that $\rho|_{G_q} \in \mathcal{C}_q$ for all $q\in S$
   has the structure 
$$
R_{(\mathcal{N}_q)_{q\in S}} \simeq \W[[T_1,T_2]]/(r_1,r_2, r_3).
$$
There exists an infinite set of primes $p$ of density 1/192 such that
$R_{(\mathcal{N}_q)_{q\in S\cup\{p\}}}$ has the structure 
$$
R_{(\mathcal{N}_q)_{q\in S\cup\{p\}}}
 \simeq \W[[T_1,T_2, T_3]]/(r_1,r_2, r_3, r_4).
$$
\end{corollary}
\begin{proof}
   This follows by combining 
Proposition \ref{Selmer_S}, \ref{globalsetting}, and \ref{1/192}
   with
   Proposition \ref{R_N}. In addition, we use that 
   $$
\dim H^1_{\mathcal{N}^\perp}(G_{S\cup \{ p\} }, Ad(\overline{\rho})^*)
=
\dim H^1_\mathcal{N}(G_{S\cup \{p \}}, Ad(\overline{\rho})) + 1,
$$
which follows from Wiles' formula (Proposition \ref{Wiles}). 
\end{proof}

\bibliographystyle{amsplain} 
\bibliography{references.bib}

\nocite{*}

\end{document}